\documentclass[11pt]{elsarticle}

\usepackage{xcolor}
\usepackage[colorlinks=true,linkcolor=blue,citecolor=red]{hyperref}
\usepackage{amsmath,amssymb,amsthm}
\usepackage{esint}
\usepackage{enumitem}
\usepackage{mathrsfs}
\usepackage{tikz}
\usetikzlibrary{arrows.meta, positioning, decorations.pathreplacing, calc}
\usepackage{booktabs}
\usepackage{graphicx}

\usepackage{breakurl}

\newcommand{\LCLossInit}{9.6\times 10^{-2}}
\newcommand{\LCLossFinal}{4.1\times 10^{-14}}
\newcommand{\LCIntSelfCheck}{4.7\times 10^{-10}}
\newcommand{\LCMeanDH}{0.0414}
\newcommand{\LCnDense}{5\,000}
\newcommand{\LCResidMeanDense}{3.2\times 10^{-7}}
\newcommand{\LCResidMaxDense}{6.0\times 10^{-4}}
\newcommand{\LCtimingReps}{5}
\newcommand{\LCtimeQHmin}{$241$\,ms}
\newcommand{\LCtimeQHmax}{$251$\,ms}
\newcommand{\LCtimeNoQHfirst}{$258$\,ms}
\newcommand{\LCtimeNoQHlast}{$3.14$\,s}
\newcommand{\LCslowdown}{12.5\times}
\newcommand{\LCtimeQHdSix}{$1.00$\,s}
\newcommand{\LCtimeNoQHdSix}{$1.13$\,s}
\newcommand{\LCtimeQHdEight}{$0.91$\,s}
\newcommand{\LCtimeNoQHdEight}{$0.91$\,s}

\newcommand{\EllipMismA}{2.0\times 10^{-5}}
\newcommand{\EllipMeanLA}{0.972}
\newcommand{\EllipMaxLA}{0.982}
\newcommand{\EllipMismB}{4.8\times 10^{-6}}
\newcommand{\EllipMeanLB}{0.971}
\newcommand{\EllipMaxLB}{0.982}
\newcommand{\EllipMismC}{9.7\times 10^{-6}}
\newcommand{\EllipMeanLC}{0.968}
\newcommand{\EllipMaxLC}{0.982}
\newcommand{\EllipDenseMaxL}{0.987}
\newcommand{\EllipNDense}{20\,001}
\newcommand{\EllipExtremalXtwo}{-0.05}
\newcommand{\EllipCompLoss}{6.9\times 10^{-19}}
\newcommand{\EllipCompDenseMean}{2.1\times 10^{-9}}
\newcommand{\EllipCompDenseMax}{1.4\times 10^{-6}}
\newcommand{\EllipCompMeanL}{0.923}
\newcommand{\EllipCompMaxL}{1.000006}
\newcommand{\EllipCompGradCheck}{7.2\times 10^{-11}}

\setenumerate{label=(\alph*)}
\setenumerate[2]{label=(\Alph*)}

\newtheorem{theorem}{Theorem}[section]
\newtheorem{proposition}[theorem]{Proposition}
\newtheorem{lemma}[theorem]{Lemma}

\theoremstyle{definition}
\newtheorem{definition}[theorem]{Definition}

\theoremstyle{remark}
\newtheorem{remark}[theorem]{Remark}

\numberwithin{equation}{section}

\def\dist{\operatorname{dist}}

\def\gr{\operatorname{gr}}

\DeclareMathOperator{\argmin}{argmin}
\DeclareMathOperator{\Sign}{Sign}

\let\oldtocsection=\tocsection

\let\oldtocsubsection=\tocsubsection

\let\oldtocsubsubsection=\tocsubsubsection

\newcommand{\tocsection}[2]{\hspace{0em}\oldtocsection{#1}{#2}\textbf}
\newcommand{\tocsubsection}[2]{\hspace{1em}\oldtocsubsection{#1}{#2}}
\newcommand{\tocsubsubsection}[2]{\hspace{2em}\oldtocsubsubsection{#1}{#2}}

\begin{document}
	
	\begin{frontmatter}
		
		\title{Distance-Residual Physics-Informed Neural Networks:\\
			A Deep Learning Framework for Differential and\\
			Partial Differential Inclusions}
		
		\author[verkin]{Maria Filipkovska}
		\author[upct]{Juan Jos\'e Mar\'{\i}n}
		\author[itu]{Isil Oner}
		\author[upct]{Francisco Periago\corref{cor}}
		\author[coper]{Krzysztof Rykaczewski}
		
		\cortext[cor]{Corresponding author}
		
		\address[verkin]{B. Verkin Institute for Low Temperature Physics and Engineering of the National Academy of Sciences of Ukraine,\, Nauky Ave. 47, Kharkiv 61103, Ukraine.}
		
		\address[upct]{Department of Applied Mathematics and Statistics, Universidad Polit\'ecnica de Cartagena, member of the European University of Technology EUT+, Campus Muralla del Mar, 30202 Cartagena, Murcia, Spain.}
		
		\address[itu]{Department of Mathematics, Faculty of Sciences, Gebze Technical University, Kocaeli 41400, Turkey.}
		
		\address[coper]{Faculty of Mathematics and Computer Science, Nicolaus Copernicus University in Toru\'n, Chopina 12/18, 87-100 Toru\'n, Poland.}
		
		\begin{abstract}
		We introduce \emph{Distance-Residual Physics-Informed Neural Networks} (DR-PINNs), a physics-informed learning framework for approximating solutions of ordinary and partial differential inclusions, governing laws in which a differential operator is constrained to lie in a set-valued map 	rather than equaling a prescribed function.  The method replaces the classical pointwise PDE/ODE residual by the squared distance from the differential operator to the admissible set. This distance vanishes exactly when the inclusion is satisfied and measures the infimal correction needed for the operator to enter the admissible set. For a fixed closed convex admissible set, the squared distance is differentiable with respect to the operator value, with gradient given by the metric projection. When the admissible set also depends on the network state, this dependence is included through the chain rule. The framework encompasses ordinary differential inclusions and partial differential inclusions with set-valued reaction terms. For both settings we prove \emph{consistency}: under the stated closedness, measurability, convexity, and growth assumptions, any sequence of candidates satisfying the initial (and, in the parabolic case, boundary) conditions whose continuous distance-residual functional tends to zero admits a subsequence converging to an exact solution of the target inclusion. These are conditional statements for the continuous distance-residual functional; they do not cover the finite-collocation training loss, the behaviour of the optimizer, or convergence rates. The proofs combine a priori estimates (Gr\"onwall inequality and parabolic energy estimates), weak compactness arguments (Arzel\`a-Ascoli and an Aubin-Lions-type compact embedding), and the lower semicontinuity of normal convex integrands.  For admissible sets given as convex hulls of finitely many vertices, projection onto the set reduces to a small convex quadratic program, making the loss efficiently computable inside the training loop. Numerical experiments demonstrate high accuracy on the differential-inclusion benchmarks; in the parabolic relay benchmark, trained against an $\varepsilon$-banded relaxation of the relay, the learned solutions reproduce the finite-time extinction dynamics quantitatively in the band-entry observable (seed-stable, persistent, and matching a reference solver to the resolution of the evaluation grid) while the distance residual remains concentrated in a thin layer ahead of the discontinuous extinction front.
		\end{abstract}
		
		\begin{keyword}
			Physics-Informed Neural Networks \sep
			distance function \sep
			differential inclusions \sep
			partial differential inclusions  \sep
			consistency analysis.
			\MSC[2020] 34A60 \sep 49J53 \sep 65M99 \sep 68T07.
		\end{keyword}
		
	\end{frontmatter}
	
	\tableofcontents
	\newpage
	
	\section{Introduction}
	\label{sec:intro}
	
	Physics-Informed Neural Networks (PINNs) approximate solutions of
	differential equations by incorporating the governing law directly into
	the training loss \cite{raissi2019}. Since the work of Raissi,
	Perdikaris, and Karniadakis, they have been used for nonlinear PDEs in
	fluid mechanics, heat transfer, and wave propagation, as well as inverse
	problems, parameter identification, operator learning, and
	high-dimensional stochastic differential equations. In these settings the
	governing law is single-valued, for example an ODE
	$\dot{x}(t)=f(t,x(t))$ or a PDE $\mathcal{D}(u)=f$, and the PINN loss is
	built from the pointwise residual obtained after substituting a neural
	network surrogate into that equality.
	
	The same residual construction is not available when the governing law is
	set-valued:
	\[
	\mathcal{D}(u) \in \mathcal{F}(u),
	\]
	where $\mathcal{F}(u)$ is a multifunction (set-valued map).  Concrete
	instances include:
	\begin{itemize}
		\item \emph{Differential inclusions}: $\dot{x}(t)\in F(t,x(t))$,
		where $F(t,x)$ is a set of admissible velocities;
		\item \emph{Parabolic partial differential inclusions}:
		$\partial_t u - \Delta u \in \Phi(u)$, where $\Phi(u)$ is a
		set-valued reaction term.
	\end{itemize}
	In these cases there is no single-valued right-hand side against which to
	form the usual residual. For a set-valued law
	$\mathcal{D}(u)\in\mathcal{F}(u)$, consider instead the quantity
	\[
	\dist\bigl(\mathcal{D}(u),\,\mathcal{F}(u)\bigr)
	:= \inf_{v\in \mathcal{F}(u)} \|\mathcal{D}(u) - v \|
	\]
	which is well defined whenever $\mathcal{F}(u)$ is nonempty and
	closed, and it vanishes if and only if the inclusion is satisfied.  This
	suggests replacing the classical PINN residual by a \emph{distance
		residual} and training a neural network to minimize the loss
	\[
	\mathcal{L}(\theta)
	=
	\frac{1}{N}\sum_{i=1}^{N}
	\dist^2\bigl(\mathcal{D}(u_\theta)(z_i),\,
	\mathcal{F}(u_\theta)(z_i)\bigr),
	\]
	where $u_\theta$ is a neural network surrogate of $u$, and $\{z_i\}_{i=1}^N$ are collocation points.  The resulting methodology, which
	we call \emph{Distance-Residual Physics-Informed Neural Networks}
	(DR-PINNs), extends the PINN paradigm to genuinely set-valued
	differential laws, while reducing to the classical formulation when
	$\mathcal{F}(u)=\{f(u)\}$ is a singleton.
	
	\vspace{0.3cm}
	
	\noindent\textbf{Related work.}
	Distance and projection operators have also been used in PINNs, but for different purposes.  Deguchi and Asai \cite{deguchi2025} use $R$-function-based approximate distance fields to impose Dirichlet
	boundary conditions exactly on non-convex domains, so the distance
	encodes the geometry of the boundary rather than an admissible set for
	the dynamics.  Several recent architectures equip a PINN with a
	projection layer that maps an unconstrained network output onto a
	feasible region to guarantee hard-constraint satisfaction; see the
	KKT-based projection of Iftakher et al.\ \cite{iftakher2025} or the
	integral-conservation projection of Baez et al.\ \cite{baez2025}.  In
	all of these constructions the projection acts once, on the network
	output, to enforce a fixed constraint; it does not serve as the training
	residual, and none of them addresses a genuinely set-valued evolution law.
	
	Wu, Lisser, and Chamoin
	\cite{wu2024cmame,wu2023,wu2024neuropinn,wu2025errorbound} study
	variational inequalities and complementarity problems using neurodynamic
	optimization and PINNs. They first rewrite the variational inequality as
	an ODE whose right-hand side contains a metric projection onto the fixed
	feasible set, and then train a PINN with the ordinary pointwise residual
	of that auxiliary equation. In this formulation the set-valued aspect is
	removed before training. DR-PINNs instead leave
	$\dot{x}(t)\in F(t,x(t))$ multivalued and measure the distance to the
	state-dependent set $F(t,x_\theta(t))$, so the projection is recomputed
	along the current trajectory during training.
	
	Kasyanov, Kapustyan, Levenchuk, and Novykov \cite{kasyanov2024}
	proposed a machine-learning method (including PINNs and the deep Galerkin
	method) for reacti\-on-diffusion equations with \emph{multivalued}
	interaction functions. Their method first replaces the multifunction by a
	single-valued Lipschitz approximation (a
	Pasch--Hausdorff-type regularization), and a classical pointwise
	residual is then trained for the regularized, single-valued equation;
	the set-valued character of the law is thus removed \emph{before}
	training, and the quality of the approximation is governed by an
	additional regularization parameter. A follow-up study
	\cite{paliichuk2026} analyzes the Monte Carlo estimation of the
	corresponding loss functionals and shows that the variance of the
	standard estimator degenerates as the regularization parameter grows,
	which further underlines the cost of the prior single-valued
	regularization. DR-PINNs instead keep the inclusion genuinely
	multivalued and use the distance to the (possibly state-dependent)
	admissible set itself as the training residual, with no regularization
	parameter to drive to a limit; this requires the evaluation of a
	metric projection inside the training loop. A direct experimental comparison with the regularization-based approach of
	\cite{kasyanov2024} on common benchmarks remains to be carried out.
	
	The theory of differential inclusions is well developed
	\cite{aubin_cellina_1984,aubin_frankowska_2009,deimling_1992}, whereas
	PINN-type methods for set-valued problems have so far, to the best of our
	knowledge, used a prior single-valued regularization of the multifunction
	\cite{kasyanov2024,paliichuk2026}. The contribution here is to use the
	distance from the differential operator to the admissible set directly as
	the training residual, without prior single-valued regularization, and to
	prove consistency results for both ordinary and parabolic differential
	inclusions.
	
	\vspace{0.3cm}
	
	\noindent\textbf{Contributions. }
	The paper makes the following contributions:
	\begin{enumerate}
		\item A general \emph{distance-residual principle} (Section~\ref{sec:principle})
		that provides a unified method for constructing PINN losses for
		set-valued differential laws with nonempty closed values. The later
		properties use stronger hypotheses: differentiability of the loss and
		computability of the projection require closed \emph{convex} values,
		while the consistency results additionally use the measurability, growth,
		and compactness assumptions stated in
		Sections~\ref{subsec:consistency} and~\ref{sec:pde}.
		\item \emph{Conditional consistency theorems} for both ordinary differential
		inclusions (Section~\ref{subsec:consistency}) and partial
		differential inclusions (Section~\ref{sec:pde-consistency}), showing that
		vanishing \emph{continuous} distance-residual functionals certify
		admissible solutions. Their relation to finite-collocation training is
		discussed in Remark~\ref{rem:collocation-gap}; the theorems do not
		address optimizer convergence or convergence rates.
		\item A \emph{practical projection algorithm} (Section~\ref{sec:projection-extreme-points})
		for admissible sets given as convex hulls, based on a small quadratic
		program combined with the Quickhull algorithm.
		\item \emph{Numerical experiments} (Section~\ref{sec:numerics}) on
		benchmark differential and partial differential inclusions.
	\end{enumerate}
	
%
%
	
	\section{Preliminaries}
	\label{sec:preliminaries}
	
	\subsection{Set-valued functions}
	
	We collect the notation for set-valued functions used below, including the
	notions of graph and semicontinuity.
	
	\begin{definition}
		Let $X$ and $Y$ be nonempty sets. A \emph{set-valued function} (or \emph{multifunction}) from $X$ to $Y$ is a mapping
		\[
		\mathcal{F}:X \rightrightarrows Y,
		\]
		which assigns to each $x\in X$ a subset $\mathcal{F}(x)\subseteq Y$. The \emph{graph} of $\mathcal{F}$ is defined as
		\[
		\gr(\mathcal{F}):=\{(x,y)\in X\times Y:\, y\in \mathcal{F}(x)\}.
		\]
		For a subset $A\subseteq X$, the image of $A$ under $\mathcal{F}$ is given by
		\[
		\mathcal{F}(A):=\bigcup_{x\in A}\mathcal{F}(x).
		\]
	\end{definition}

	\begin{definition}
		Let $X$ and $Y$ be topological spaces, and
		$\mathcal{F}:X\rightrightarrows Y$ a set-valued map.
		We say that $\mathcal{F}$ is \emph{upper semicontinuous} at $x\in X$ whenever for every open set
		$U$ containing $\mathcal{F}(x)$ there exists a neighborhood $V$ of $x$ such that
		\[
		\mathcal{F}(V)\subset U.
		\]
		The map $\mathcal{F}$ is said to be \emph{lower semicontinuous} at $x\in X$
		if for every $y\in \mathcal{F}(x)$ and every neighborhood $U$ of $y$, there exists a neighborhood $V$ of $x$ such that
		\[
		\mathcal{F}(x') \cap U \neq \varnothing \quad \text{for every } x' \in V.
		\]
		$\mathcal{F}$ is said to be upper semicontinuous (respectively, lower semicontinuous) if it is upper semicontinuous (respectively, lower semicontinuous) at every point $x\in X$.
		Finally, $\mathcal{F}$ is called \emph{continuous} if it is both upper and
		lower semicontinuous.
	\end{definition}

	Following \cite{aubin_frankowska_2009}, we formulate properties of a set-valued map in terms of its graph: the map is said to satisfy property $P$ when its graph has that property.
	For instance, a set-valued map is said to be closed (respectively convex, measurable) if and only if its graph is closed (respectively convex, measurable).

	In contrast, $\mathcal{F}$ is said to be \emph{closed-valued}, \emph{convex-valued}, \emph{bounded-valued}, \emph{compact-valued}, or \emph{strict} if for every $x\in X$, the set \(\mathcal{F}(x)\) is closed, convex, bounded, compact, or nonempty, respectively.
It is easy to show that every strict, closed, set-valued function
$\mathcal{F}:X\rightrightarrows Y$ between metric spaces is closed-valued
(see, e.g., \cite[p.\,22]{BGMO1986}).

	We conclude by recalling the notions of selection and integral of a set-valued function.
	
	\begin{definition}
		Let $X$ and $Y$ be nonempty sets and
		assume that $\mathcal{F}:X\rightrightarrows Y$ is a strict set-valued function.
		A selection from $\mathcal{F}$ is a function $f:X \to Y$
		such that $f(x)\in\mathcal{F}(x)$ for every $x\in X$.
	\end{definition}
	
	\begin{definition}
		Let $I\subseteq\mathbb{R}$ be an interval and
		assume that  $\mathcal{F}:I\rightrightarrows \mathbb{R}^d$ is strict and upper semicontinuous. The integral of $\mathcal{F}$ over $I$ is defined as the set
		\[
		\int_I \mathcal{F}(s) \, ds := \Big\{ \int_I f(s)\,ds : f \text{ is an integrable selection from } \mathcal{F} \Big\}.
		\]
	\end{definition}

	\subsection{Projection onto a set} \label{sec:projection}
	
	Fix a normed space $X$.
	If $C \subseteq X$ is a nonempty closed set, then the infimum
	\begin{equation*}
		\text{dist}(x_0,C) := \inf\{\|x_0 - x\|_X : x \in C\}, \quad x_0 \in X,
	\end{equation*}
	is always well defined; it is attained, for instance, whenever $X$ is finite-dimensional or, more generally, whenever $C$ is boundedly compact (in a general infinite-dimensional normed space a nearest point may fail to exist). We say that $z\in C$ is a \textit{projection} of $x_0$ onto $C$ whenever $\text{dist}(x_0,C)=\|z - x_0\|_X$. In general there can be several projections of $x_0$ onto $C$, that is, there might be more than one point in $C$ closest to $x_0$.
	A fundamental result in convex analysis (see, e.g., \cite[Theorem~5.2, p.\,132]{brezis_2011}) establishes that, whenever $X$ is a Hilbert space, the projection is unique provided that $C$ is nonempty, closed, and convex.
	
	In the case $X=\mathbb{R}^n$, we assume throughout this work that the norm is the Euclidean norm $\|\cdot\|_2$. To simplify the notation, we write $|\cdot|$ instead of $\|\cdot\|_2$. In this scenario, if $C\subset\mathbb{R}^n$ is nonempty, closed, and \emph{convex}, then the Euclidean projection of $x_0$ onto $C$ is attained and unique, and will be denoted by $\Pi_C(x_0)$. In other words,
	\begin{equation}
		\Pi_{C}(x_0):=\argmin_{x\in C}|x_0-x|.
	\end{equation}
For a nonempty closed set that is \emph{not} convex, a nearest point still exists in $\mathbb{R}^n$, but it need not be unique; whenever we write $\Pi_C$ below, the set $C$ is assumed to be convex.

	\subsection{Background on Differential Inclusions} \label{sec:basicsDI}
	
	Differential inclusions extend ordinary differential equations by
	allowing the velocity of the state variable to belong to a set of
	admissible directions.  Let $T>0$, $x_0\in\mathbb{R}^d$, and assume that
	\[
	F:[0,T]\times\mathbb{R}^d\rightrightarrows\mathbb{R}^d
	\]
	is a set-valued map.  Consider the initial value problem (IVP)
	for a \textit{differential} \textit{inclusion}:
	\begin{equation}\label{eq:inclusion}
		\dot{x}(t)\in F(t,x(t)),\qquad x(0)=x_0.
	\end{equation}
	For each pair $(t,x)$, the set $F(t,x)$ represents the admissible
	velocities available to the system at time $t$ and state $x$.
	
	\begin{definition}
		A function $x:[0,T]\to\mathbb{R}^d$ is a \emph{solution} of the IVP
		\eqref{eq:inclusion} if it is absolutely continuous, satisfies
		$x(0)=x_0$, and $\dot{x}(t)\in F(t,x(t))$ for almost every $t\in[0,T]$.
	\end{definition}
	
	Absolute continuity guarantees that $\dot{x}(t)$
	exists almost everywhere. Thus, for any solution $x$, we have
	the exact representation
	\[
	x(t) = x_0 + \int_{0}^{t} \dot{x}(s) \, ds.
	\]

	However, to translate the inclusion $\dot{x}(t) \in F(t,x(t))$ into
	an integral characterization in terms of $F$, regularity conditions
	are needed (see Lemma \ref{lem:integral_repr} below).
	
	\begin{lemma}{\rm (Integral Representation, \cite[Lemma~1, p.\,99]{aubin_cellina_1984})} \label{lem:integral_repr}
		Assume that $F:[0,T] \times \mathbb{R}^d \rightrightarrows \mathbb{R}^d$ is upper semicontinuous, compact-valued, and convex-valued. Then, a continuous function
		$x(\cdot)$ is a solution on $[0,T]$ of the differential inclusion
		\[
		\dot{x}(t) \in F(t, x(t))
		\]
		if and only if for every pair $(t_1, t_2)\in [0,T]$ with $t_1 < t_2$,
		\[
		x(t_2) \in x(t_1) + \int_{t_1}^{t_2} F(s, x(s)) \, ds.
		\]
	\end{lemma}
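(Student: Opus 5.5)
We prove the two implications separately. Throughout, we use that $F$ is upper semicontinuous and compact-valued, so $s\mapsto F(s,x(s))$ is locally bounded along the continuous curve $x$. Indeed, the set $\{(s,x(s)):s\in[0,T]\}$ is compact, and an upper semicontinuous compact-valued map sends compact sets to compact sets. Hence there is $M>0$ with $F(s,x(s))\subset M\overline{B}$ for all $s\in[0,T]$.

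\emph{Necessity.} Let $x$ be a solution. Then $x$ is absolutely continuous and $\dot x(s)\in F(s,x(s))$ for a.e.\ $s$. Modify $\dot x$ on a null set by assigning an arbitrary value of $F(s,x(s))$; this is a measurable choice because $s\mapsto F(s,x(s))$ is upper semicontinuous, hence measurable, so a measurable selection theorem applies. The result is an integrable selection $f$ of $s\mapsto F(s,x(s))$, and $|f|\le M$ gives integrability. Then
\[
x(t_2)-x(t_1)=\int_{t_1}^{t_2}\dot x(s)\,ds=\int_{t_1}^{t_2}f(s)\,ds\in\int_{t_1}^{t_2}F(s,x(s))\,ds .
\]

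\emph{Sufficiency.} Assume the integral inclusion holds for all $t_1<t_2$. First, $x$ is Lipschitz with constant $M$, because every integrable selection is bounded by $M$. Therefore $x$ is absolutely continuous and $\dot x(t)$ exists for a.e.\ $t$. Fix such a $t$ that is not the endpoint $T$. For small $h>0$,
\[
\frac{x(t+h)-x(t)}{h}\in\frac1h\int_t^{t+h}F(s,x(s))\,ds .
\]
Let $\varepsilon>0$. Upper semicontinuity at $(t,x(t))$ and continuity of $x$ give $\delta>0$ such that $F(s,x(s))\subset F(t,x(t))+\varepsilon B$ for $|s-t|<\delta$. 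The set $K_\varepsilon:=\overline{F(t,x(t))+\varepsilon B}$ is closed and convex, because $F(t,x(t))$ is convex and compact. By the standard mean-value property of integrals of convex closed sets, the average of a selection taking values in $K_\varepsilon$ lies in $K_\varepsilon$. This follows from Hahn--Banach separation: if the average $a$ lay outside $K_\varepsilon$, a linear functional $p$ would satisfy $\langle p,a\rangle>\sup_{K_\varepsilon}p\ge\langle p,f(s)\rangle$ for all $s$, and integrating gives a contradiction. Hence the difference quotient lies in $K_\varepsilon$ for $h<\delta$. Letting $h\to0$ gives $\dot x(t)\in K_\varepsilon$, and letting $\varepsilon\to0$ with $F(t,x(t))$ closed gives $\dot x(t)\in F(t,x(t))$. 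We conclude that $x$ is a solution. (The definition also requires $x(0)=x_0$; this is understood as part of the hypothesis on $x$, since the lemma concerns only the inclusion.)

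The main obstacle is the sufficiency step, specifically the passage from averaged integral information to the pointwise derivative. It is the only place where convexity is used, and it needs upper semicontinuity in the combined variable $(s,x(s))$. The Lipschitz bound from local boundedness and the separation argument are the two ingredients to handle with care.
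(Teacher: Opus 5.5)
Your proof is correct; the paper gives no proof of its own and only cites Aubin--Cellina, and your argument is the standard one for this characterization: local boundedness of $s\mapsto F(s,x(s))$ yields Lipschitz continuity, and upper semicontinuity at $(t,x(t))$ together with convexity of the closed enlargement $F(t,x(t))+\varepsilon\overline{B}$ traps the difference quotients. The measurable selection theorem in your necessity part is not needed: Lebesgue measure is complete, so redefining $\dot x$ arbitrarily within $F(s,x(s))\subset M\overline{B}$ on a null set already gives a measurable, $M$-bounded selection.
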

	
	The proof of the following result appears in
	\cite[Theorem~5.2, p.\,58]{deimling_1992}.
	
	\begin{theorem}[Basic existence]\label{thm:existence}
		Let $F:[0,T]\times\mathbb{R}^d\rightrightarrows\mathbb{R}^d$ be
		strict, closed-valued, and convex-valued.
		Assume that $F(t,x)$ is measurable in $t$ for each $x\in\mathbb{R}^d$,
		upper semicontinuous in $x$ for each $t\in[0,T]$,
		and satisfies the linear growth condition
		\begin{equation} \label{eq:existence-growth}
		\sup_{v\in F(t,x)}|v|\le m(t)(1+|x|)
		\end{equation}
		for some $m\in L^1(0,T)$ and each $(t,x)\in [0,T]\times \mathbb{R}^d$.  Then, for every $x_0\in\mathbb{R}^d$, there
		exists a solution of \eqref{eq:inclusion}.
	\end{theorem}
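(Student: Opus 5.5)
The plan is the standard compactness route: build approximate solutions from single-valued surrogates, get uniform bounds from the linear growth condition \eqref{eq:existence-growth}, extract a convergent subsequence, and pass to the limit in the inclusion using convexity and closedness of the values together with upper semicontinuity in $x$.

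\textbf{Step 1 (a priori bound).} Any absolutely continuous $y$ with $y(0)=x_0$ and $|\dot y(t)|\le m(t)(1+|y(t)|)$ a.e.\ satisfies, by Gr\"onwall's inequality,
\[
|y(t)|\le R:=(1+|x_0|)\exp\bigl(\|m\|_{L^1}\bigr)-1 .
\]
It therefore suffices to work on the ball $\bar B_{R+1}$. There $|v|\le (R+2)m(t)$ for all $v\in F(t,x)$, so $F$ can be treated as integrably bounded.

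\textbf{Step 2 (approximate solutions).} I would use Euler-type polygons with measurable selections, since Carath\'eodory regularity in $t$ makes a direct Cellina-type continuous approximation awkward. For $n\in\mathbb{N}$, partition $[0,T]$ into $n$ intervals $[t_k,t_{k+1}]$. On each interval I pick a measurable selection $f_k(s)\in F(s,x_n(t_k))$. This exists because $s\mapsto F(s,x_n(t_k))$ is measurable and closed-valued, so the Kuratowski--Ryll-Nardzewski selection theorem applies. I then set
\[
x_n(t)=x_n(t_k)+\int_{t_k}^t f_k(s)\,ds .
\]
Each $x_n$ is absolutely continuous with $|\dot x_n(t)|\le m(t)(1+|x_n(t_k)|)$, and the Step~1 argument applied to this lagged inequality gives a bound $|x_n|\le R$ uniform in $n$.

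\textbf{Step 3 (compactness).} The family $\{x_n\}$ is equi-absolutely-continuous, since $|x_n(t)-x_n(s)|\le (R+1)\int_s^t m$. By Arzel\`a--Ascoli, a subsequence satisfies $x_n\to x$ uniformly. The derivatives $\dot x_n$ are dominated by $(R+1)m\in L^1$, so by Dunford--Pettis they are uniformly integrable, and a further subsequence gives $\dot x_n\rightharpoonup g$ weakly in $L^1$. Passing to the limit in $x_n(t)=x_0+\int_0^t\dot x_n$ yields $x(t)=x_0+\int_0^t g$. Hence $x$ is absolutely continuous and $\dot x=g$ a.e.

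\textbf{Step 4 (limit in the inclusion).} This is the main obstacle. For a.e.\ $t$ we have $\dot x_n(t)\in F(t,x_n(\tau_n(t)))$, where $\tau_n(t)\to t$ is the left grid point, and $x_n(\tau_n(t))\to x(t)$. By Mazur's lemma, convex combinations of the tails $\{\dot x_j\}_{j\ge n}$ converge strongly in $L^1$ to $\dot x$, hence a.e.\ along a subsequence. Fix such a $t$ and $\varepsilon>0$. Upper semicontinuity of $F(t,\cdot)$ at $x(t)$ gives $F(t,y)\subset F(t,x(t))+\varepsilon B$ for all $y$ near $x(t)$. Therefore $\dot x_j(t)\in F(t,x(t))+\varepsilon B$ for all large $j$. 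This set is closed and convex, because $F(t,x(t))$ is closed and convex and $\varepsilon B$ is compact and convex, so the convex combinations lie in it, and so does their limit $\dot x(t)$. Letting $\varepsilon\to0$ and using closedness of $F(t,x(t))$ gives $\dot x(t)\in F(t,x(t))$.

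The delicate points are two. First, the upper semicontinuity estimate must hold at every relevant $t$ off a common null set, which is fine because the exceptional sets are countable unions of null sets. Second, unbounded closed convex values need care: the $\varepsilon$-enlargement remains closed because $\varepsilon B$ is compact, so the argument goes through without compactness of the values.
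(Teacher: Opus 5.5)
Your proof is correct, but it takes a different route from the one the paper relies on.

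The paper does not prove this theorem itself. It cites Deimling and describes the underlying existence theory as a measurable-selection plus fixed-point argument. That is also how it proves the parabolic analogue, Theorem~\ref{thm:existence-pde}: show that the selection set of the superposed multifunction is nonempty, closed and convex; show that the solution multimap has sequentially closed graph, leaves a closed convex set invariant, and has relatively compact range; then apply Bohnenblust--Karlin.

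You replace the fixed-point theorem by explicit Euler polygons, followed by Arzel\`a--Ascoli and Dunford--Pettis. Your Step~4 is the same Mazur/upper-semicontinuity/convexity closure mechanism as Lemma~\ref{lem:bk-weak-closedness}, so that ingredient is shared.

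Your construction gains mainly on the measurability side. Because the state is frozen at a grid value on each subinterval, you only ever select from $s\mapsto F(s,y)$ with $y$ fixed, which is exactly what the hypothesis supplies. In the fixed-point route one must select from $t\mapsto F(t,y(t))$ for a continuous $y$. Under mere measurability in $t$ and upper semicontinuity in $x$, this superposition need not be measurable (the point made in Remark~\ref{rem:H2-product}). So nonemptiness of the selection set needs an extra step-function approximation combined with the same closure lemma. The fixed-point formulation, in turn, casts existence as a property of a compact closed-graph multimap, which transfers to settings where time stepping is less convenient, as in the paper's parabolic case.

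Two minor remarks:
\begin{itemize}
\item The lagged Gr\"onwall bound in Step~2 is cleanest by induction over the subintervals. From $1+|x_n(t)|\le(1+|x_n(t_k)|)\bigl(1+\int_{t_k}^t m\bigr)$ on $[t_k,t_{k+1}]$ you get $1+|x_n(t)|\le(1+|x_0|)e^{\int_0^t m}$.
\item The growth condition already makes the values bounded, hence compact, so your closing caveat about unbounded values is not needed.
\end{itemize}
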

	
	\begin{remark}
		Strictness, closedness, and convexity of $F$ support the measu\-rable-selection
		and fixed-point arguments used in the existence theory; see
		\cite{aubin_cellina_1984,aubin_frankowska_2009,deimling_1992}. They also
		ensure the properties of the distance to $F(t,x)$ used later: finiteness,
		attainment, and, under convexity, differentiability.
	\end{remark}
	
	Unlike ordinary differential equations, uniqueness of solutions of
	\eqref{eq:inclusion} is not expected in general.  For a given initial
	condition $x_0$, there may exist many admissible trajectories.  The
	\emph{solution set}
	\begin{equation}\label{eq:solution-DI}
		\mathcal{S}(x_0)
		=
		\bigl\{
		x\in {\rm AC}([0,T];\mathbb{R}^d):
		x(0)=x_0,\;
		\dot{x}(t)\in F(t,x(t))\ \text{a.e. on } [0,T]
		\bigr\},
	\end{equation}
	where ${\rm AC}([0,T];\mathbb{R}^d)$ denotes the collection of
	absolutely continuous functions from $[0,T]$ into $\mathbb{R}^d$,
	is naturally set-valued.  Consequently, a trained DR-PINN approximates
	\emph{one} admissible trajectory, not a distinguished one; this
	interpretation is made precise by the consistency result in
	Section~\ref{subsec:consistency}.
	
\subsection{Background on Parabolic Partial Differential Inclusions} \label{sec:basicsPDI}

Next, we consider partial differential inclusions. Although, for the sake of simplicity, in what follows we only address the parabolic case, most of the main ideas and results extend to other types of partial differential inclusions.

Let $\Omega\subset\mathbb{R}^d$ be a bounded domain with $C^{1,1}$ boundary, or a bounded convex domain, let $T>0$, and set
$Q:=(0,T)\times\Omega$.  Consider the initial-boundary value problem
(IBVP) for the parabolic partial differential inclusion:
\begin{equation}\label{eq:parabolic-inclusion-background}
	\left\{
	\begin{array}{ll}
		\partial_t u(t,x)-\Delta u(t,x)\in\Phi(u(t,x)), & (t,x)\in Q,\\[4pt]
		u(0,x)=u_0(x), & x\in\Omega,\\[4pt]
		u(t,x)=0, & (t,x)\in(0,T)\times\partial\Omega,
	\end{array}
	\right.
\end{equation}
where $\Phi:\mathbb{R}\rightrightarrows\mathbb{R}$ is a set-valued reaction
term and $u_0\in H_0^1(\Omega)$.

The natural functional framework for strong solutions of
\eqref{eq:parabolic-inclusion-background} is the \emph{parabolic energy
space}
\[
\mathcal{W}(0,T)
:=
L^2\bigl(0,T;H^2(\Omega)\cap H_0^1(\Omega)\bigr)
\cap
H^1\bigl(0,T;L^2(\Omega)\bigr),
\]
equipped with the norm
\[
\|u\|_{\mathcal{W}}^2:=\|u\|_{L^2(0,T;H^2(\Omega))}^2
+\|\partial_t u\|_{L^2(Q)}^2.
\]
Setting $V:=H^2(\Omega)\cap H_0^1(\Omega)$
and $W:=L^2(\Omega)$, both Hilbert spaces, we have
\[
\mathcal{W}(0,T)=\Big\{v\in L^2(0,T;V):\frac{dv}{dt}\in L^2(0,T;W)\Big\},
\]
so by \cite[Theorem~II.5.14]{BoyerFabrie2013}, the embedding
\[
\mathcal{W}(0,T)\hookrightarrow
C^0\bigl([0,T];[V,W]_{1/2}\bigr)
\]
is continuous, where $[V,W]_{1/2}$ denotes the interpolation space of
order $1/2$ between $V$ and $W$ (see also the
intermediate-derivatives and trace theory of
\cite[Chapter~1, Theorem~3.1]{LionsMagenes}). Under our standing
regularity assumptions on $\Omega$, the interpolation space is
identified by the following lemma, which yields the continuous
embedding
\begin{equation}\label{eq:W-cont-embedding}
	\mathcal{W}(0,T)\hookrightarrow C\bigl([0,T];H_0^1(\Omega)\bigr).
\end{equation}

\begin{lemma}[Interpolation identity]\label{lem:interp-identity}
	Let $\Omega\subset\mathbb{R}^d$ be a bounded domain with $C^{1,1}$
	boundary, or a bounded convex domain. Then,
	\begin{equation}\label{eq:interp-identity}
		\bigl[H^2(\Omega)\cap H_0^1(\Omega),\,L^2(\Omega)\bigr]_{1/2}
		=
		H_0^1(\Omega).
	\end{equation}
\end{lemma}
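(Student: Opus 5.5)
The plan is to realize the interpolation space through the spectral (functional) calculus of the Dirichlet Laplacian and then identify the resulting space with $H_0^1(\Omega)$ via elliptic regularity and the variational definition of $H_0^1$. Let $A:=-\Delta$ with domain $D(A):=\{v\in H_0^1(\Omega):\Delta v\in L^2(\Omega)\}$. Then $A$ is a positive self-adjoint operator on $W=L^2(\Omega)$ with compact resolvent. By Lax--Milgram and the form definition of $A$, we have $D(A^{1/2})=H_0^1(\Omega)$ with $\|A^{1/2}v\|_{L^2}=\|\nabla v\|_{L^2}$. This holds for any bounded domain, since the quadratic form $\int_\Omega|\nabla v|^2$ on $H_0^1(\Omega)$ is closed and defines $A$.

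\textbf{Step 1 (elliptic regularity).} I would first show $D(A)=H^2(\Omega)\cap H_0^1(\Omega)=V$ with equivalent norms, namely $\|v\|_{H^2}\simeq\|Av\|_{L^2}+\|v\|_{L^2}$. For $C^{1,1}$ boundaries this is the classical $H^2$-regularity of the Dirichlet problem (Gilbarg--Trudinger, Thm.~8.12 adapted to $C^{1,1}$, or Grisvard Thm.~2.4.2.5). For bounded convex domains it is Grisvard's result, Thm.~3.2.1.2. The inclusion $V\subset D(A)$ is trivial, and the norm equivalence follows from the closed graph theorem once the set equality is known. \emph{This is the main obstacle}, since it is the only place where the boundary hypothesis enters. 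I would not reprove it but cite these regularity theorems, which are exactly what the standing hypothesis on $\Omega$ is designed for.

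\textbf{Step 2 (interpolation of domains of powers).} For a positive self-adjoint operator $A$ on a Hilbert space $W$, one has $[D(A),W]_{1/2}=D(A^{1/2})$ with equivalent norms (Lions--Magenes, Ch.~1, Thm.~15.1 / Prop.~2.1). Concretely, I would diagonalize $A$ with eigenpairs $(\lambda_k,e_k)$, $\lambda_k>0$. Then $D(A)$ and $W$ become weighted $\ell^2$ spaces with weights $(1+\lambda_k)^2$ and $1$. Interpolating weighted $\ell^2$ spaces at $\theta=1/2$ gives the weight $1+\lambda_k$, which is the norm of $D(A^{1/2})$. Note the paper's convention $[V,W]_{1/2}$ is symmetric at $\theta=1/2$, so the ordering of the pair is immaterial.

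\textbf{Step 3 (conclusion).} Interpolation spaces depend only on the pairs up to equivalent norms. Combining the two steps therefore gives
\[
[H^2(\Omega)\cap H_0^1(\Omega),L^2(\Omega)]_{1/2}=[D(A),L^2(\Omega)]_{1/2}=D(A^{1/2})=H_0^1(\Omega),
\]
with the norms on the two sides equivalent, which is \eqref{eq:interp-identity}.
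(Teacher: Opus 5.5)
Your proposal is correct and follows essentially the same route as the paper. Both arguments obtain $[D(-\Delta_D),L^2(\Omega)]_{1/2}=D\bigl((-\Delta_D)^{1/2}\bigr)=H_0^1(\Omega)$ from the Lions--Magenes abstract theory for the form-defined Dirichlet Laplacian, and then identify $D(-\Delta_D)=H^2(\Omega)\cap H_0^1(\Omega)$ via the same Grisvard $H^2$-regularity results (Thm.~3.2.1.2 for convex domains, Thm.~2.4.2.5 for $C^{1,1}$ boundaries); your eigenfunction-expansion justification is simply an explicit version of the interpolation result the paper cites.
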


\begin{proof}
	Apply the abstract framework of
	\cite[Chapter~1, \S2.1]{LionsMagenes} with $X=H_0^1(\Omega)$
	(equipped with the gradient inner product) and $Y=L^2(\Omega)$. The
	associated positive self-adjoint operator
	(\cite[Chapter~1, \S2.1, eq.~(2.4)]{LionsMagenes}) is precisely the
	Dirichlet Laplacian $-\Delta_D$, i.e., the negative Laplacian $-\Delta$
	restricted to the domain $D(-\Delta_D)=\{u\in H_0^1(\Omega):-\Delta u\in L^2(\Omega)\text{ weakly}\}$. By
	\cite[Chapter~1, \S2.4, Proposition~2.1, eq.~(2.42)]{LionsMagenes},
	\[
	\bigl[D(-\Delta_D),\,L^2(\Omega)\bigr]_{1/2}=H_0^1(\Omega).
	\]
	It therefore remains to identify
	$D(-\Delta_D)=H^2(\Omega)\cap H_0^1(\Omega)$, which is exactly the
	statement of elliptic $H^2$-regularity for the Dirichlet Laplacian.
	This equality holds in both admissible regimes for $\Omega$:
	if $\Omega$ is bounded and convex (with no smoothness assumed on the
	boundary), by \cite[Theorem~3.2.1.2]{Grisvard1985}, and if $\Omega$
	is bounded with $C^{1,1}$ boundary (not necessarily convex), by
	\cite[Theorem~2.4.2.5]{Grisvard1985}, in both cases applied to
	$-\Delta$ with homogeneous Dirichlet conditions. Combining the two
	results yields \eqref{eq:interp-identity}.
\end{proof}

The compactness statement needed below is stronger than the usual
Aubin--Lions conclusion, which gives compactness in
$L^2(0,T;L^2(\Omega))$. We state the required version separately.

\begin{lemma}[{Compact embedding into $C([0,T];L^2(\Omega))$}]
	\label{lem:compact-embedding}
	The embedding
	\[
	\mathcal{W}(0,T)
	\hookrightarrow\hookrightarrow
	C\bigl([0,T];L^2(\Omega)\bigr)
	\]
	is compact.
\end{lemma}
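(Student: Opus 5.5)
We will obtain the compactness of $\mathcal{W}(0,T)\hookrightarrow C([0,T];L^2(\Omega))$ from the Arzelà--Ascoli theorem for functions with values in the Banach space $L^2(\Omega)$. Equivalently, we can invoke the Simon (Aubin--Lions--Simon) criterion. Fix a bounded set $B\subset\mathcal{W}(0,T)$, say $\|u\|_{\mathcal{W}}\le R$ for all $u\in B$. Two things must be checked. First, $B$ is equicontinuous as a family of maps $[0,T]\to L^2(\Omega)$. Second, for each $t\in[0,T]$ the set $\{u(t):u\in B\}$ is relatively compact in $L^2(\Omega)$. Arzelà--Ascoli then says that $B$ is relatively compact in $C([0,T];L^2(\Omega))$, which is exactly the claimed compactness of the embedding.

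For \emph{equicontinuity} we use the time derivative. Each $u\in\mathcal{W}(0,T)$ lies in $H^1(0,T;L^2(\Omega))$, so it has an absolutely continuous representative with $u(t)-u(s)=\int_s^t\partial_t u(\tau)\,d\tau$ in $L^2(\Omega)$. Cauchy--Schwarz then gives
\[
\|u(t)-u(s)\|_{L^2(\Omega)}\le |t-s|^{1/2}\,\|\partial_t u\|_{L^2(Q)}\le R\,|t-s|^{1/2},
\]
a uniform Hölder-$1/2$ modulus of continuity for the whole family.

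For \emph{pointwise relative compactness} we use the continuous embedding \eqref{eq:W-cont-embedding}, which rests on Lemma~\ref{lem:interp-identity} and the Boyer--Fabrie trace result. It gives $\sup_{t\in[0,T]}\|u(t)\|_{H_0^1(\Omega)}\le C\|u\|_{\mathcal{W}}\le CR$. So for each fixed $t$ the set $\{u(t):u\in B\}$ is bounded in $H_0^1(\Omega)$. Since $\Omega$ is bounded, the Rellich--Kondrachov theorem says $H_0^1(\Omega)\hookrightarrow L^2(\Omega)$ is compact. This step needs no regularity of $\partial\Omega$, because for $H_0^1$ one can extend by zero to a large ball. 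Hence that set is relatively compact in $L^2(\Omega)$.

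Combining the two steps with the vector-valued Arzelà--Ascoli theorem completes the argument. The one point needing care is that elements of $\mathcal{W}(0,T)$ are a priori equivalence classes in time. We must work with the continuous representative provided by \eqref{eq:W-cont-embedding}, and check that it agrees with the absolutely continuous $L^2$-valued representative coming from the $H^1$-in-time regularity. Both are continuous into $L^2(\Omega)$ and coincide almost everywhere, so they coincide everywhere.

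The main obstacle is really the uniform $H_0^1$ bound at every time $t$. This is where the interpolation identity of Lemma~\ref{lem:interp-identity} is essential. If only the $L^2(0,T;H^2)$ bound were available, pointwise-in-time compactness would fail. In that case we would fall back on Simon's criterion instead: compactness of $\{\int_{t_1}^{t_2}u\}$ in $L^2(\Omega)$ together with uniform time translates, which yields compactness in $C([0,T];L^2(\Omega))$ directly. Since \eqref{eq:W-cont-embedding} is already available, the Arzelà--Ascoli route suffices.
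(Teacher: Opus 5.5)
Your proposal is correct and follows essentially the same route as the paper: uniform H\"older-$1/2$ equicontinuity in $L^2(\Omega)$ from the bound on $\partial_t u$, pointwise relative compactness from the uniform $H_0^1(\Omega)$ bound supplied by \eqref{eq:W-cont-embedding} together with Rellich--Kondrachov, and then the vector-valued Arzel\`a--Ascoli theorem. Your extra remark that the continuous and absolutely continuous representatives coincide is a harmless refinement that the paper leaves implicit.
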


\begin{proof}
	Let $\{u_n\}_{n\in\mathbb{N}}\subset\mathcal{W}(0,T)$ be bounded, that is,
	there exists a constant $M>0$ such that
	$\|u_n\|_{\mathcal{W}}\le M$ for every $n\in\mathbb{N}$. First, for all
	$0\le s\le t\le T$, the fundamental theorem of calculus in
	$L^2(\Omega)$ and the Cauchy--Schwarz inequality give
	\begin{align*}
	\|u_n(t)-u_n(s)\|_{L^2(\Omega)}
	& \le\int_s^t\|\partial_t u_n(\tau)\|_{L^2(\Omega)}\,d\tau
	\\	
	&\le|t-s|^{1/2}\,\|\partial_t u_n\|_{L^2(Q)}
	\le M\,|t-s|^{1/2},
	\end{align*}
	so the sequence is uniformly equicontinuous with values in
	$L^2(\Omega)$. Second, by the continuous embedding
	\eqref{eq:W-cont-embedding} there is a constant $C_e>0$ such that
	$\sup_{t\in[0,T]}\|u_n(t)\|_{H_0^1(\Omega)}\le C_eM$ for every $n$;
	hence, for each fixed $t\in[0,T]$, the set
	$\{u_n(t):n\in\mathbb{N}\}$ is bounded in $H_0^1(\Omega)$ and
	therefore, by the Rellich--Kondrachov theorem, relatively compact in
	$L^2(\Omega)$. The Arzel\`a--Ascoli theorem in the Banach space
	$C([0,T];L^2(\Omega))$ then provides a uniformly convergent
	subsequence, which proves the claimed compactness.
\end{proof}

The (compact, hence continuous) embedding into $C([0,T];L^2(\Omega))$
ensures in particular that the initial condition $u(0,\cdot)=u_0$ is well
defined pointwise in time for every $u\in\mathcal{W}(0,T)$.

\begin{definition}\label{def:strong-solution-pde}
	A function $u\in\mathcal{W}(0,T)$ is a \emph{strong solution} of
	\eqref{eq:parabolic-inclusion-background} if
	\begin{enumerate}
		\item[\rm(a)] $u(0,\cdot)=u_0$ on $\Omega$,
		\item[\rm(b)] $u(t,\cdot)|_{\partial\Omega}=0$ for a.e.\ $t\in(0,T)$,
		\item[\rm(c)] $\partial_t u(t,x)-\Delta u(t,x)\in\Phi(u(t,x))$ for
		a.e.\ $(t,x)\in Q$.
	\end{enumerate}
	Since $u\in\mathcal{W}(0,T)$ implies $\partial_t u,\,\Delta u\in L^2(Q)$,
	the inclusion in~(c) is meaningful pointwise almost everywhere.
	Moreover, by the continuous embedding \eqref{eq:W-cont-embedding},
	every $u\in\mathcal{W}(0,T)$ satisfies $u(t,\cdot)\in H_0^1(\Omega)$
	for \emph{every} $t\in[0,T]$, so condition~(b) holds automatically
	(for every $t$, in fact) and is retained only for emphasis.
\end{definition}

The \emph{solution set} of \eqref{eq:parabolic-inclusion-background} is
\begin{equation}\label{eq:solution-PDI}
	\mathcal{S}(u_0)
	:=
	\bigl\{u\in\mathcal{W}(0,T):u\text{ is a strong solution of }
	\eqref{eq:parabolic-inclusion-background}\bigr\}.
\end{equation}
As in the ODE setting, strong solutions need not be unique, i.e. $\mathcal{S}(u_0)$
may contain multiple elements.

The following existence result guarantees that $\mathcal{S}(u_0)$ is
nonempty under standing assumptions on the set-valued reaction term
$\Phi:\mathbb{R}\rightrightarrows\mathbb{R}$ that are \emph{exactly} the
hypotheses of the consistency analysis of
Section~\ref{sec:pde-consistency}:

\begin{enumerate}[label=(P\arabic*)]
	\item \label{P1} $\Phi$ is strict, closed-valued, and convex-valued.
	
	\item \label{P2} $\Phi$ is upper semicontinuous.
	
	\item \label{P3} $\Phi$ satisfies the linear growth condition
	\eqref{eq:existence-growth} with $m\equiv m_0$ constant, i.e.,
	there exists a constant $m_0\ge 0$ such that
	\[
	\sup_{r\in\Phi(s)}|r|\le m_0(1+|s|)
	\qquad\forall\, s\in\mathbb{R}.
	\]
\end{enumerate}

Since \ref{P2} requires only \emph{upper} semicontinuity, $\Phi$
need not admit any continuous selection; the relay term of
Section~\ref{subsec:parabolic-PDI} is a case in point. Selection
theorems of Michael type (see, e.g., \cite[Lemma~2.1]{deimling_1992}),
which require lower semicontinuity, therefore cannot be used to reduce
\eqref{eq:parabolic-inclusion-background} to a single-valued semilinear
equation. We instead work directly with the multivalued problem and
obtain existence from the fixed-point theorem of Bohnenblust and Karlin
\cite{bohnenblust_karlin_1950}, an extension of Kakutani's theorem
\cite{kakutani_1941} from compact convex sets in finite-dimensional
spaces to closed convex sets in Banach spaces.

\paragraph{Linear solvability}
For $h\in L^2(Q)$ and $u_0\in H_0^1(\Omega)$,
let $S(h)\in\mathcal{W}(0,T)$ denote the unique
solution of the linear Dirichlet problem
\[
\partial_t u-\Delta u=h \ \text{ on } Q,
\qquad u=u_0 \ \text{ on }\{0\}\times\Omega,
\qquad u=0 \ \text{ on }(0,T)\times\partial\Omega.
\]
By parabolic maximal $L^2$-regularity, $S$ is well defined and there
exists a constant $C_{\mathrm{par}}>0$ such that
\begin{equation}\label{eq:max-reg-linear}
	\|S(h)\|_{\mathcal{W}(0,T)}
	\le
	C_{\mathrm{par}}\bigl(\|h\|_{L^2(Q)}+\|u_0\|_{H_0^1(\Omega)}\bigr);
\end{equation}
see \cite[Section~7.1.3, Theorem~5]{Evans} for smooth domains; the
$H^2$ elliptic regularity required under our standing assumptions on
$\Omega$ is supplied by \cite[Theorem~3.2.1.2]{Grisvard1985} in the
convex case and by \cite[Theorem~2.4.2.5]{Grisvard1985} in the
$C^{1,1}$-boundary case (cf.\ the proof of
Lemma~\ref{lem:interp-identity}). We write $S_0$ for the solution
operator of the same problem with zero initial datum and
$w_0:=S(0)\in\mathcal{W}(0,T)$ for the solution of the zero-source
problem with initial datum $u_0$. Then, we have $S(h)=S_0(h)+w_0$; by
\eqref{eq:max-reg-linear} (applied with $u_0=0$), $S_0:L^2(Q)\to
\mathcal{W}(0,T)$ is a bounded linear operator, and, after composition
with the continuous embedding $\mathcal{W}(0,T)\hookrightarrow L^2(Q)$,
it is continuous from $L^2(Q)$ into $L^2(Q)$.

We begin by showing that, for any fixed $w\in L^2(Q)$, the
Nemytskii-type multimap $(t,x)\mapsto\Phi(w(t,x))$ admits at least one
$L^2(Q)$-selection.

\begin{proposition}[Nonemptiness of the selection set]
	\label{prop:bk-selection}
	Assume {\rm \ref{P1}--\ref{P3}} and let $w\in L^2(Q)$. Then
	\[
	S^2_{\Phi,w}
	:=
	\bigl\{v\in L^2(Q) : v(t,x)\in\Phi(w(t,x))\ \text{a.e.\ on } Q\bigr\}
	\]
	is nonempty, closed, and convex in $L^2(Q)$.
\end{proposition}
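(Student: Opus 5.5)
Since $\Phi:\mathbb{R}\rightrightarrows\mathbb{R}$ is strict, closed-valued and convex-valued by \ref{P1}, each value $\Phi(s)$ is a nonempty closed interval of $\mathbb{R}$. By \ref{P3} it is bounded, so we can write $\Phi(s)=[\phi_-(s),\phi_+(s)]$ with $|\phi_\pm(s)|\le m_0(1+|s|)$. The plan is to produce an explicit selection by composing a Borel selector with $w$, and then to verify closedness and convexity directly.

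\emph{Step 1 (measurability of the endpoint functions).} We first show that $\phi_+$ is upper semicontinuous and $\phi_-$ is lower semicontinuous; in particular both are Borel. Fix $s$ and $\varepsilon>0$, and apply \ref{P2} with the open set $U=(\phi_-(s)-\varepsilon,\phi_+(s)+\varepsilon)\supset\Phi(s)$. This gives a neighbourhood $V$ of $s$ with $\Phi(s')\subset U$ for all $s'\in V$. Hence $\phi_+(s')<\phi_+(s)+\varepsilon$ and $\phi_-(s')>\phi_-(s)-\varepsilon$ on $V$, which is exactly the required semicontinuity. Now set $v:=\phi_+\circ w$. Since $w$ is Lebesgue measurable on $Q$ and $\phi_+$ is Borel, $v$ is measurable. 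By \ref{P3}, $|v|\le m_0(1+|w|)$, and because $Q$ has finite measure this bound lies in $L^2(Q)$. By construction $v(t,x)\in\Phi(w(t,x))$ everywhere, so $v\in S^2_{\Phi,w}$ and the set is nonempty. The only delicate point is composing a Borel function with a Lebesgue-measurable one, which is legitimate; $w$ is a.e.\ defined, so $v$ is an a.e.\ class, which is all that is needed. This step is the main one; it is easy here only because the values are intervals in $\mathbb{R}$.

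\emph{Step 2 (convexity).} Let $v_1,v_2\in S^2_{\Phi,w}$ and $\lambda\in[0,1]$. Off the union of the two exceptional null sets, both $v_1(t,x)$ and $v_2(t,x)$ lie in the convex set $\Phi(w(t,x))$, so $\lambda v_1+(1-\lambda)v_2$ does as well. Convexity follows.

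\emph{Step 3 (closedness).} Let $v_n\in S^2_{\Phi,w}$ with $v_n\to v$ in $L^2(Q)$. Passing to a subsequence, $v_n\to v$ a.e. Discard the countable union of the exceptional null sets together with the null set where pointwise convergence fails. At every remaining point, $v(t,x)$ is a limit of points of the closed set $\Phi(w(t,x))$, hence belongs to it. Therefore $v\in S^2_{\Phi,w}$. Equivalently, one may use the description $v(t,x)\in[\phi_-(w),\phi_+(w)]$, which is preserved under a.e.\ limits.
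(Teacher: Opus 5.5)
Your proof is correct, but it reaches nonemptiness by a different route than the paper.

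The paper uses \ref{P2} only to show that $\Phi$ is a Borel measurable multifunction: the set $\{s:\Phi(s)\cap C\neq\varnothing\}$ is closed for every closed $C$. From this it deduces that $(t,x)\mapsto\Phi(w(t,x))$ is a measurable closed-valued multifunction on $(Q,\mathscr{L}(Q))$. It then invokes the abstract measurable selection theorem \cite[Theorem~8.1.3]{aubin_frankowska_2009*}. Hypothesis \ref{P3} enters only afterwards, to obtain the $L^2$ bound.

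You instead use the fact that the values are intervals in $\mathbb{R}$, compact because of \ref{P3}. You write down the explicit selector $\phi_+=\max\Phi$, whose upper semicontinuity follows directly from \ref{P2}. Composing this Borel function with $w$ gives the selection without any Kuratowski--Ryll-Nardzewski-type machinery.

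What each route buys:
\begin{itemize}
\item Your argument is more elementary and constructive. It is also slightly stronger in form: one upper semicontinuous, hence Borel, selector of $\Phi$ serves every $w$ simultaneously.
\item It is, however, tied to the scalar order structure and to bounded values. For a vector-valued reaction term $\Phi:\mathbb{R}^m\rightrightarrows\mathbb{R}^k$ there is no canonical maximum.
\item The paper's selection step needs only nonempty closed values plus measurability, so it carries over verbatim to that setting.
\end{itemize}

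Your convexity and closedness steps are the same as the paper's.
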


\begin{proof}
	Let $C\subset\mathbb{R}$ be closed. By \ref{P2}, the set
	$\{s\in\mathbb{R}:\Phi(s)\subset\mathbb{R}\setminus C\}$ is open,
	hence its complement
	\[
	\Phi^{-1}(C):=\{s\in\mathbb{R}:\Phi(s)\cap C\neq\varnothing\}
	\]
	is closed, and in particular Borel. Writing an open set
	$U\subset\mathbb{R}$ as a countable union of closed sets shows that
	$\Phi^{-1}(U)$ is Borel for every open $U$; thus $\Phi$ is a Borel
	measurable multifunction with nonempty closed values, by
	\ref{P1}.
	
	For every closed $C\subset\mathbb{R}$,
	\[
	\bigl\{(t,x)\in Q : \Phi(w(t,x))\cap C\neq\varnothing\bigr\}
	= w^{-1}\bigl(\Phi^{-1}(C)\bigr)
	\]
	is Lebesgue measurable, being the preimage of a closed (hence
	Borel) set under the Lebesgue measurable function $w:Q\to\mathbb{R}$.
	Hence $F(t,x):=\Phi(w(t,x))$ is a measurable multifunction from the
	complete $\sigma$-finite measure space $(Q,\mathscr{L}(Q))$ into the
	nonempty closed subsets of $\mathbb{R}$.
	Here and elsewhere throughout this work, $\mathscr{L}(Q)$ denotes
	the collection of all Lebesgue measurable subsets of $Q$.
	
	By the measurable selection theorem
	\cite[Theorem~8.1.3]{aubin_frankowska_2009}, there exists a
	measurable $v:Q\to\mathbb{R}$ with $v(t,x)\in\Phi(w(t,x))$ a.e.\ on
	$Q$. By \ref{P3},
	$|v(t,x)|\le m_0\bigl(1+|w(t,x)|\bigr)$ a.e., hence
	$|v|^2\le2m_0^2(1+|w|^2)$ a.e.\ and, since $|Q|<\infty$,
	\begin{equation}\label{eq:bk-selection-bound}
		\|v\|_{L^2(Q)}^2\le 2m_0^2\bigl(|Q|+\|w\|_{L^2(Q)}^2\bigr)<\infty.
	\end{equation}
	Therefore $v\in S^2_{\Phi,w}$, so $S^2_{\Phi,w}\neq\varnothing$.
	
	Convexity of $S^2_{\Phi,w}$ follows from the pointwise convexity of
	$\Phi(w(t,x))$ in \ref{P1}. For closedness, if $v_n\in
	S^2_{\Phi,w}$ and $v_n\to v$ in $L^2(Q)$, a subsequence converges
	a.e., and the pointwise closedness of $\Phi(w(t,x))$ gives
	$v\in S^2_{\Phi,w}$.
\end{proof}

The fixed-point theorem we use is the following.

\begin{theorem}[Bohnenblust--Karlin]
	\label{thm:bk-fixed-point}
	Let $X$ be a Banach space and let $D\subset X$ be a nonempty closed
	convex set. Let $\mathcal{G}:D\rightrightarrows X$ be a strict, closed-valued, convex-valued multifunction such that
	$\mathcal{G}(D)\subset D$. Suppose that:
	\begin{enumerate}
		\item[\textup{(a)}] The graph of $\mathcal{G}$ is sequentially
		closed in $D\times X$: if $x_n\to x$ in $D$, $y_n\to y$ in $X$,
		and $y_n\in\mathcal{G}(x_n)$ for every $n$, then
		$y\in\mathcal{G}(x)$.
		\item[\textup{(b)}] $\mathcal{G}(D)$ is contained in a
		sequentially compact subset of $X$.
	\end{enumerate}
	Then $\mathcal{G}$ has a fixed point, i.e., there exists $x_0\in D$ such that
	$x_0\in\mathcal{G}(x_0)$.
\end{theorem}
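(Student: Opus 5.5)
The plan is to reduce the statement to Schauder's fixed-point theorem on a compact convex set, using a continuous approximate selection of $\mathcal{G}$ of Cellina type and then passing to the limit with the closed graph in (a).

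\textbf{Compact convex invariant set.} Let $K_0$ be the sequentially compact set from (b) containing $\mathcal{G}(D)$. In a metric space sequential compactness is compactness, so $\overline{\mathcal{G}(D)}$ is compact. Since $\mathcal{G}(D)\subset D$ and $D$ is closed, we also have $\overline{\mathcal{G}(D)}\subset D$. By Mazur's theorem, the closed convex hull $K:=\overline{\mathrm{co}}\,\overline{\mathcal{G}(D)}$ is compact in the Banach space $X$. Because $D$ is closed and convex, $K\subset D$, and clearly $\mathcal{G}(K)\subset\mathcal{G}(D)\subset K$. We therefore restrict $\mathcal{G}$ to $K$. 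On $K$, the closed-graph hypothesis (a) together with the fact that all values lie in the compact set $K$ gives upper semicontinuity of $\mathcal{G}|_K$. The argument is by contradiction: if there were an open $U\supset\mathcal{G}(x)$, points $x_n\to x$, and $y_n\in\mathcal{G}(x_n)\setminus U$, then a subsequence $y_n\to y\notin U$, while (a) forces $y\in\mathcal{G}(x)\subset U$.

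\textbf{Approximate selections.} Fix $\varepsilon>0$. By upper semicontinuity, each $x\in K$ has a radius $\delta(x)\in(0,\varepsilon)$ such that $\mathcal{G}(B(x,2\delta(x))\cap K)\subset\mathcal{G}(x)+\varepsilon B$. By compactness, finitely many balls $B(x_i,\delta_i)$ with $\delta_i:=\delta(x_i)$ cover $K$. Take a continuous partition of unity $\{\psi_i\}$ subordinate to this cover, pick $y_i\in\mathcal{G}(x_i)$ (possible since $\mathcal{G}$ is strict), and set $f_\varepsilon(x):=\sum_i\psi_i(x)y_i$. Then $f_\varepsilon:K\to K$ is continuous, since $K$ is convex and contains every $y_i$. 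The key estimate is
\[
\dist\bigl((x,f_\varepsilon(x)),\gr\mathcal{G}\bigr)\le 2\varepsilon .
\]
To prove it, let $i_0$ be the active index with the largest $\delta_i$. Every active index $i$ satisfies $x_i\in B(x_{i_0},2\delta_{i_0})$, so $y_i\in\mathcal{G}(x_{i_0})+\varepsilon B$. This set is convex because $\mathcal{G}(x_{i_0})$ is convex, so it contains the convex combination $f_\varepsilon(x)$. Moreover $|x-x_{i_0}|<\delta_{i_0}<\varepsilon$. This step is the main technical obstacle: the convexity of the values and the choice of the largest radius are exactly what is needed, and I would follow Cellina's construction closely.

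\textbf{Schauder and passage to the limit.} Since $K$ is a nonempty compact convex subset of a Banach space, Schauder's theorem yields $x_\varepsilon\in K$ with $f_\varepsilon(x_\varepsilon)=x_\varepsilon$. By the estimate above there are $a_\varepsilon\in K$ and $b_\varepsilon\in\mathcal{G}(a_\varepsilon)$ with $|a_\varepsilon-x_\varepsilon|+|b_\varepsilon-x_\varepsilon|\le 3\varepsilon$. Let $\varepsilon=1/n$. By compactness of $K$ a subsequence satisfies $x_{\varepsilon}\to\bar x\in K\subset D$, and then $a_\varepsilon\to\bar x$ and $b_\varepsilon\to\bar x$ as well. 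The sequential closedness (a) now gives $\bar x\in\mathcal{G}(\bar x)$, which is the desired fixed point. Closedness of the values of $\mathcal{G}$ is not really needed in this argument; it follows anyway from (a).
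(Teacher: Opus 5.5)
Your proof is correct, and it takes a different route from the one the paper relies on. The paper gives no argument of its own. It cites Bohnenblust--Karlin's Theorem~4 (and Zeidler's Corollary~9.8) and only indicates how their proof goes: it passes to a finite $\varepsilon$-net, uses the fact that the closed $\varepsilon$-neighbourhood of the convex set $\mathcal{G}(x)$ is again convex, invokes the finite-dimensional Kakutani theorem, and lets $\varepsilon\to0$.

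You instead stay in infinite dimensions. You build the compact convex invariant set $K=\overline{\mathrm{co}}\,\overline{\mathcal{G}(D)}\subset D$ via Mazur's theorem and derive upper semicontinuity of $\mathcal{G}|_K$ from (a) and (b). You then replace $\mathcal{G}$ by a Cellina-type continuous $\varepsilon$-approximate selection, apply the single-valued Schauder theorem, and pass to the limit with (a).

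The essential convexity input is the same in both arguments: the convexity of $\mathcal{G}(x_{i_0})+\varepsilon B$. The final limit step via the closed graph is also identical.

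What your version buys is a self-contained argument that shows exactly where each hypothesis enters. This includes your correct observation that closedness of the values is already implied by (a). The cost is two extra tools. Mazur's theorem uses the completeness of $X$. Schauder's theorem is itself usually proved by the same finite $\varepsilon$-net reduction.

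The Bohnenblust--Karlin route, as the paper describes it, works on the convex hull of a finite net, which is automatically compact. Because the net can be chosen in $\mathcal{G}(D)\subset D$, this hull lies in $D$, so no compact convex hull of $\mathcal{G}(D)$ is needed. The price there is the multivalued Kakutani theorem as the black box.
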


\begin{proof}
	This is \cite[Theorem~4, p.~159]{bohnenblust_karlin_1950}, restated
	in the notation of the present paper. Bohnenblust and Karlin's proof
	uses the convexity of the values $\mathcal{G}(x)$ through the fact
	that the closed $\varepsilon$-neighbourhood of a \emph{convex} set is
	again convex, which is what allows the finite-dimensional Kakutani
	theorem to be invoked on a finite $\varepsilon$-net, even though
	convexity of the values is not restated as a separately numbered
	hypothesis in their theorem statement; it is made explicit as a
	standing hypothesis in the modern formulation of \cite[Corollary~9.8, p.\,452]{Zeidler1986}, which we have followed above.
\end{proof}

The key analytic ingredient, beyond Proposition~\ref{prop:bk-selection},
is the following weak--strong closedness property of the selection sets.
When passing to the limit along $w_n\to w$ strongly in $L^2(Q)$ and
$v_n\in S^2_{\Phi,w_n}$ such that $v_n\rightharpoonup v$ only weakly in
$L^2(Q)$, it guarantees that the weak limit $v$ remains an admissible
selection of $\Phi(w(\cdot,\cdot))$; without it, one could not pass to
the limit in the set-valued reaction term.

\begin{lemma}[Weak-strong closedness of the selection sets]
	\label{lem:bk-weak-closedness}
	Assume {\rm \ref{P1}--\ref{P3}}. Suppose that
	$\{w_n\}_{n\in\mathbb{N}},\{v_n\}_{n\in\mathbb{N}}\subset L^2(Q)$
	satisfy:
	\begin{enumerate}
		\item[\textup{(i)}] $w_n\to w$ strongly in $L^2(Q)$,
		\item[\textup{(ii)}] $v_n\in S^2_{\Phi,w_n}$ for every
		$n\in\mathbb{N}$,
		\item[\textup{(iii)}] $v_n\rightharpoonup v$ weakly in $L^2(Q)$.
	\end{enumerate}
	Then, $v\in S^2_{\Phi,w}$.
\end{lemma}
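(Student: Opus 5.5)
The proof will follow the standard Mazur-lemma route for convex-valued, upper semicontinuous multifunctions. The idea is to upgrade the weak convergence $v_n\rightharpoonup v$ to strong convergence of convex combinations, and then use pointwise upper semicontinuity together with convexity of the values $\Phi(w(t,x))$.

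\emph{Step 1 (strong a.e.\ convergence of the states).} Since $w_n\to w$ in $L^2(Q)$, pass to a subsequence, not relabelled, with $w_n(t,x)\to w(t,x)$ for a.e.\ $(t,x)\in Q$. Weak convergence is preserved along subsequences, so (iii) still holds for it.

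\emph{Step 2 (Mazur).} For each $k$, the weak limit $v$ lies in the weak closure, hence the strong closure, of $\operatorname{conv}\{v_n:n\ge k\}$. So there are finite convex combinations $z_k=\sum_{n\ge k}\lambda^k_n v_n$ with $z_k\to v$ in $L^2(Q)$. Passing to a further subsequence, $z_k(t,x)\to v(t,x)$ for a.e.\ $(t,x)$. Fix a point $(t,x)$ outside the union of the exceptional null sets, where also $v_n(t,x)\in\Phi(w_n(t,x))$ for all $n$; there are countably many conditions, so the discarded set is still null.

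\emph{Step 3 (pointwise closure via u.s.c.\ and convexity).} This is the main step. Set $s=w(t,x)$ and fix $\varepsilon>0$. The set $\Phi(s)$ is closed and convex, so its open $\varepsilon$-neighbourhood $U_\varepsilon=\{r:\dist(r,\Phi(s))<\varepsilon\}$ is open and convex and contains $\Phi(s)$. By \ref{P2} there is $\delta>0$ with $\Phi(s')\subset U_\varepsilon$ whenever $|s'-s|<\delta$. Since $w_n(t,x)\to s$, there is $N$ with $v_n(t,x)\in U_\varepsilon$ for all $n\ge N$. By convexity of $U_\varepsilon$, $z_k(t,x)\in U_\varepsilon$ for all $k\ge N$, so $\dist(v(t,x),\Phi(s))\le\varepsilon$. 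Letting $\varepsilon\to0$ and using closedness from \ref{P1} gives $v(t,x)\in\Phi(w(t,x))$.

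The point to handle with care is that only upper semicontinuity is available: no lower semicontinuity and no continuous selection. That is exactly why one passes through the $\varepsilon$-neighbourhood rather than trying to approximate $v$ by selections directly. The convexity of this neighbourhood, inherited from the convexity of $\Phi(s)$, is what lets Mazur's convex combinations stay inside it. Finally, $v\in L^2(Q)$ as a weak limit, so $v\in S^2_{\Phi,w}$. Because the conclusion concerns only the limit $v$, extracting subsequences along the way costs nothing.
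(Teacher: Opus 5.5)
Your proposal is correct and follows the same route as the paper. Both arguments take a subsequence with $w_n\to w$ a.e., apply Mazur's lemma to the tails $\{v_n\}_{n\ge k}$ to get convex combinations converging strongly and then a.e.\ to $v$, use upper semicontinuity and the convexity of the open $\varepsilon$-neighbourhood of $\Phi(w(t,x))$ to keep those combinations inside it, and conclude by letting $\varepsilon\to0$ and using closedness; the paper differs only in notation, writing the neighbourhood as $\Phi(w(t,x))+\varepsilon B$ and fixing $\|\hat v_n-v\|_{L^2(Q)}\le 1/n$ explicitly.
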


\begin{proof}
	Since $w_n\to w$ strongly in $L^2(Q)$, we may pass to a subsequence,
	for which we use the same labeling as for the original sequence,
	along which $w_n(t,x)\to w(t,x)$ for a.e.
	$(t,x)\in Q$. Dropping finitely many terms does not affect a weak
	limit, so for every $n$ the tail sequence $\{v_k\}_{k\ge n}$ also
	converges weakly to $v$. Applying Mazur's lemma (see, e.g.,
	\cite[Corollary~3.8]{brezis_2011}) to this tail sequence yields, for
	each $n$, a finite convex combination supported on indices $k\ge n$,
	\[
	\hat v_n := \sum_{k=n}^{N(n)} \lambda_k^{(n)} v_k,
	\qquad
	\lambda_k^{(n)}\ge0,
	\quad
	\sum_{k=n}^{N(n)}\lambda_k^{(n)}=1,
	\]
	with $\|\hat v_n-v\|_{L^2(Q)}\le 1/n$, so that $\hat v_n\to v$
	strongly in $L^2(Q)$; passing to a further subsequence, not
	relabeled, we may also assume $\hat v_n(t,x)\to v(t,x)$ for a.e.\
	$(t,x)\in Q$.
	
	For each $k$, hypothesis \textup{(ii)} gives
	$v_k(t,x)\in\Phi(w_k(t,x))$ for all $(t,x)$ outside a null set
	$N_k\subset Q$; set $N:=\bigcup_k N_k$, again a null set. Fix any
	$(t,x)\in Q\setminus N$ at which, in addition, both of the above
	a.e.\ convergences hold; such points form a set of full measure. Fix
	$\varepsilon>0$. By \ref{P1}, $\Phi(w(t,x))$ is closed and
	convex, so $\Phi(w(t,x))+\varepsilon B$, with $B$ the open unit ball
	of $\mathbb{R}$, is open and convex. By \ref{P2}, $\Phi$ is
	upper semicontinuous at $w(t,x)$, so there exists $\delta>0$ such
	that $|s-w(t,x)|<\delta$ implies
	$\Phi(s)\subset\Phi(w(t,x))+\varepsilon B$. Since
	$w_k(t,x)\to w(t,x)$, there is $N_1=N_1(t,x,\varepsilon)$ with
	\[
	v_k(t,x)\in\Phi(w_k(t,x))\subset\Phi(w(t,x))+\varepsilon B,
	\qquad k\ge N_1.
	\]
	As $\Phi(w(t,x))+\varepsilon B$ is convex, it contains every convex
	combination of the points $v_k(t,x)$ with $k\ge N_1$; since
	$\hat v_n$ is supported on indices $k\ge n$, we get
	$\hat v_n(t,x)\in\Phi(w(t,x))+\varepsilon B$ for all $n\ge N_1$.
	Letting $n\to\infty$ and using $\hat v_n(t,x)\to v(t,x)$ together
	with the closedness of $\Phi(w(t,x))+\varepsilon\overline{B}$, we
	obtain $v(t,x)\in\Phi(w(t,x))+\varepsilon\overline{B}$. Since
	$\varepsilon>0$ was arbitrary and $\Phi(w(t,x))$ is closed,
	\[
	v(t,x)\in\bigcap_{\varepsilon>0}
	\bigl(\Phi(w(t,x))+\varepsilon\overline{B}\bigr)
	=\Phi(w(t,x)).
	\]
	This holds for a.e.\ $(t,x)\in Q$, the exceptional set being a
	countable union of null sets, so $v\in S^2_{\Phi,w}$.
\end{proof}

\paragraph{The solution multimap and an invariant set}
We seek a strong solution as a fixed point of the set-valued map
\begin{equation}\label{eq:bk-G-def}
	\mathcal{G}:L^2(Q)\rightrightarrows L^2(Q),
	\qquad
	\mathcal{G}(w):=\bigl\{\,S_0(v)+w_0 : v\in S^2_{\Phi,w}\,\bigr\}.
\end{equation}
Indeed, if $w^\ast\in\mathcal{G}(w^\ast)$, then
$w^\ast=S_0(v^\ast)+w_0$ for some $v^\ast\in S^2_{\Phi,w^\ast}$, and
$u:=w^\ast\in\mathcal{W}(0,T)$ solves $\partial_t u-\Delta u=v^\ast\in
\Phi(w^\ast)=\Phi(u)$ a.e.\ on $Q$ with $u(0)=u_0$, i.e.\
$u\in\mathcal{S}(u_0)$.

As is standard when the reaction term has linear (not small) growth, a
self-mapping estimate for $\mathcal{G}$ in the ordinary $L^2(Q)$-norm
need not close, since the growth constant of \ref{P3} enters the
linear energy estimate with a factor that need not be smaller than $1$.
We circumvent this with a Bielecki-type exponentially weighted norm.

\begin{lemma}[Weighted a priori bound]
	\label{lem:bk-bielecki}
	Let $\lambda>0$ and for every $g\in L^2(Q)$, define
	\[
	\|g\|_\lambda^2:=\int_0^T e^{-2\lambda t}
	\|g(t,\cdot)\|_{L^2(\Omega)}^2\,dt.
	\]
	Note that, since $e^{-2\lambda T}\le e^{-2\lambda t}\le1$ on $[0,T]$,
	$\|\cdot\|_\lambda$ is a norm on $L^2(Q)$ equivalent to
	$\|\cdot\|_{L^2(Q)}$ with
	\begin{equation}\label{eq:bk-norm-equiv}
		e^{-\lambda T}\|g\|_{L^2(Q)}\le\|g\|_\lambda\le\|g\|_{L^2(Q)}
		\qquad\text{for all } g\in L^2(Q).
	\end{equation}
	Assume {\rm \ref{P1}--\ref{P3}}. If $w\in L^2(Q)$, $v\in S^2_{\Phi,w}$,
	and $u:=S_0(v)+w_0$, then for every $\lambda>0$,
	\[
	\|u\|_\lambda
	\le
	\|w_0\|_\lambda
	+\frac{\sqrt2\,m_0\sqrt{|Q|}}{\lambda}
	+\frac{\sqrt2\,m_0}{\lambda}\,\|w\|_\lambda.
	\]
\end{lemma}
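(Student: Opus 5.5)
By linearity, $u-w_0=S_0(v)$, so the triangle inequality in $\|\cdot\|_\lambda$ reduces the claim to the weighted estimate
\[
\|S_0(v)\|_\lambda\le \frac{1}{\lambda}\|v\|_\lambda,
\]
combined with a pointwise bound on $v$. For the latter, \ref{P3} gives $|v|\le m_0(1+|w|)$ a.e., hence $|v|^2\le 2m_0^2(1+|w|^2)$ a.e. Multiplying by $e^{-2\lambda t}\le 1$ and integrating over $Q$, we get $\|v\|_\lambda^2\le 2m_0^2(|Q|+\|w\|_\lambda^2)$. Then $\sqrt{a+b}\le\sqrt a+\sqrt b$ yields
\[
\|v\|_\lambda\le \sqrt2\,m_0\sqrt{|Q|}+\sqrt2\,m_0\|w\|_\lambda,
\]
and dividing by $\lambda$ produces exactly the last two terms of the claimed bound.

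The main step is the weighted energy estimate for $z:=S_0(v)\in\mathcal W(0,T)$, which solves $\partial_t z-\Delta z=v$ with $z(0)=0$ and zero Dirichlet data. Since $z\in L^2(0,T;H^2\cap H_0^1)\cap H^1(0,T;L^2)$, the map $t\mapsto\|z(t)\|_{L^2}^2$ is absolutely continuous with derivative $2(\partial_t z,z)$. Testing the equation with $z$ and integrating by parts (justified because $z(t)\in H^2\cap H_0^1$ a.e.) gives
\[
\tfrac12\tfrac{d}{dt}\|z\|^2+\|\nabla z\|^2=(v,z)\le\|v\|\,\|z\|.
\]
Dropping the gradient term, we get $\frac{d}{dt}\|z\|\le\|v\|$ wherever $\|z\|>0$. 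To avoid the issue at $\|z\|=0$, I would instead differentiate $\sqrt{\|z\|^2+\epsilon}$ and let $\epsilon\to0$. Since $z(0)=0$, this yields $\|z(t)\|\le\int_0^t\|v(s)\|\,ds$.

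It remains to pass from this pointwise-in-time bound to the weighted norm, which I expect to be the only delicate point. Write $e^{-\lambda t}\|z(t)\|\le\int_0^t e^{-\lambda(t-s)}\,e^{-\lambda s}\|v(s)\|\,ds$. This is the convolution of $k(r):=e^{-\lambda r}\mathbf 1_{r\ge0}$ with $g(s):=e^{-\lambda s}\|v(s)\|\,\mathbf 1_{[0,T]}(s)$. Young's inequality on $\mathbb R$ gives $\|k*g\|_{L^2}\le\|k\|_{L^1}\|g\|_{L^2}\le\lambda^{-1}\|g\|_{L^2}$. Restricting to $[0,T]$ then gives $\|z\|_\lambda\le\lambda^{-1}\|v\|_\lambda$.

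If one prefers to avoid Young's inequality, an equivalent route is Cauchy--Schwarz with the split $e^{-\lambda(t-s)}=e^{-\lambda(t-s)/2}e^{-\lambda(t-s)/2}$ followed by Fubini, which gives the same $1/\lambda$ constant. Combining $\|u\|_\lambda\le\|w_0\|_\lambda+\|z\|_\lambda$ with the two bounds above completes the proof.
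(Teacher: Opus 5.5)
Your proof is correct. It has the same outer structure as the paper's: the bound $\|v\|_\lambda\le\sqrt2\,m_0(\sqrt{|Q|}+\|w\|_\lambda)$, the key estimate $\|S_0(v)\|_\lambda\le\lambda^{-1}\|v\|_\lambda$, and the triangle inequality. Where it differs is in how you prove the key estimate.

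The paper works directly with the weight. It multiplies the energy identity by $e^{-2\lambda t}$ and rewrites $e^{-2\lambda t}\frac{d}{dt}\|\tilde u\|^2$ by the product rule, so the weight produces an extra coercive term proportional to $\lambda e^{-2\lambda t}\|\tilde u\|^2$. It then absorbs the source term with the $\lambda$-weighted Young inequality $\|v\|\,\|\tilde u\|\le\frac{\lambda}{2}\|\tilde u\|^2+\frac{1}{2\lambda}\|v\|^2$. A single integration over $(0,T)$ gives $\lambda\|\tilde u\|_\lambda^2\le\lambda^{-1}\|v\|_\lambda^2$.

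You instead first prove the unweighted bound $\|z(t)\|_{L^2(\Omega)}\le\int_0^t\|v(s)\|_{L^2(\Omega)}\,ds$ for every $t$. Only afterwards do you insert the weight, through Young's convolution inequality with the kernel $e^{-\lambda r}\mathbf{1}_{r\ge0}$ (or Cauchy--Schwarz plus Fubini).

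The paper's route is shorter. It stays at the level of squared norms, so it never differentiates $\|z(t)\|$ and needs no $\epsilon$-regularization. Your route isolates a stronger intermediate statement, the nonexpansive Duhamel-type bound. It also shows exactly where the constant comes from: $1/\lambda=\|e^{-\lambda\cdot}\|_{L^1(0,\infty)}$. Truncating the kernel to $[0,T]$ would even give the slightly sharper constant $(1-e^{-\lambda T})/\lambda$. Finally, your argument extends verbatim to exponentially weighted $L^p$-in-time norms, since $\|k*g\|_{L^p}\le\|k\|_{L^1}\|g\|_{L^p}$ holds for every $p$.
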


\begin{proof}
	By \ref{P3}, as in the proof of
	Proposition~\ref{prop:bk-selection},
	\[
	|v(t,x)|^2\le2m_0^2(1+|w(t,x)|^2) \quad\text{a.e.\ on }Q.
	\]
	Multiplying by
	$e^{-2\lambda t}\le1$ and integrating over $Q$ gives
	\begin{equation}\label{eq:bk-v-weighted}
		\|v\|_\lambda\le\sqrt2\,m_0\bigl(\sqrt{|Q|}+\|w\|_\lambda\bigr).
	\end{equation}
	Let $\tilde u:=S_0(v)$, so that $\tilde u(0)=0$ and
	$\partial_t\tilde u-\Delta\tilde u=v$. Multiplying the energy
	identity
	\[
	\tfrac12\tfrac{d}{dt}\|\tilde u(t)\|_{L^2(\Omega)}^2
	+\|\nabla\tilde u(t)\|_{L^2(\Omega)}^2
	=\int_\Omega v(t,x)\,\tilde u(t,x)\,dx
	\]
	by $e^{-2\lambda t}$ one has
	\[
	e^{-2\lambda t}\tfrac{d}{dt}\|\tilde u\|_{L^2(\Omega)}^2
	=\tfrac{d}{dt}\bigl(e^{-2\lambda t}\|\tilde u\|_{L^2(\Omega)}^2\bigr)
	+2\lambda e^{-2\lambda t}\|\tilde u\|_{L^2(\Omega)}^2.
	\]
	Applying Young's inequality,
	\[
	\|v(t)\|_{L^2(\Omega)}\|\tilde u(t)\|_{L^2(\Omega)}
	\le\tfrac{\lambda}{2}\|\tilde u(t)\|_{L^2(\Omega)}^2
	+\tfrac{1}{2\lambda}\|v(t)\|_{L^2(\Omega)}^2,
	\]
	to the source term, integrating over $(0,T)$, using $\tilde u(0)=0$,
	and discarding the nonnegative terms
	$e^{-2\lambda T}\|\tilde u(T)\|_{L^2(\Omega)}^2$ and the gradient
	term, gives $\lambda\|\tilde u\|_\lambda^2\le
	\lambda^{-1}\|v\|_\lambda^2$, i.e.
	\begin{equation}\label{eq:bk-tildeu-weighted}
		\|\tilde u\|_\lambda\le\frac{1}{\lambda}\|v\|_\lambda.
	\end{equation}
	Combining
	\eqref{eq:bk-v-weighted}--\eqref{eq:bk-tildeu-weighted} with the
	triangle inequality
	$\|u\|_\lambda\le\|\tilde u\|_\lambda+\|w_0\|_\lambda$ yields the
	claim.
\end{proof}

Fix $\lambda_0:=2\sqrt2\,m_0+1$, so that
$\sqrt2\,m_0/\lambda_0\le\tfrac12$, and abbreviate
$\|\cdot\|_\ast:=\|\cdot\|_{\lambda_0}$. Set
\[
R:=2\Bigl(\|w_0\|_\ast+\frac{\sqrt2\,m_0\sqrt{|Q|}}{\lambda_0}\Bigr),
\qquad
K:=\bigl\{w\in L^2(Q):\|w\|_\ast\le R\bigr\}.
\]
The set $K$ is nonempty (it contains $0$), closed, bounded, and convex
in $L^2(Q)$; by \eqref{eq:bk-norm-equiv}, every $w\in K$ satisfies
$\|w\|_{L^2(Q)}\le e^{\lambda_0 T}R=:R'$.

\begin{lemma}[Invariance]
	\label{lem:bk-self-map}
	$\mathcal{G}(K)\subset K$.
\end{lemma}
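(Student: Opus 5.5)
To prove Lemma~\ref{lem:bk-self-map}, I would apply the weighted a priori bound of Lemma~\ref{lem:bk-bielecki} with the specific choice $\lambda=\lambda_0$.

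Fix $w\in K$ and an arbitrary element $u\in\mathcal{G}(w)$. By the definition \eqref{eq:bk-G-def}, $u=S_0(v)+w_0$ for some $v\in S^2_{\Phi,w}$. This is exactly the configuration of Lemma~\ref{lem:bk-bielecki}. That lemma requires only \ref{P1}--\ref{P3} and $w\in L^2(Q)$, and $w\in L^2(Q)$ holds because $K\subset L^2(Q)$. Taking $\lambda=\lambda_0$, it yields
\[
\|u\|_\ast\le\|w_0\|_\ast+\frac{\sqrt2\,m_0\sqrt{|Q|}}{\lambda_0}+\frac{\sqrt2\,m_0}{\lambda_0}\|w\|_\ast .
\]

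Next I would estimate each term on the right. The sum of the first two terms equals $R/2$ by the definition of $R$. For the last term, the choice $\lambda_0=2\sqrt2\,m_0+1$ guarantees $\sqrt2\,m_0/\lambda_0\le\tfrac12$, and $\|w\|_\ast\le R$ because $w\in K$. The last term is therefore at most $R/2$. Adding the two bounds gives $\|u\|_\ast\le R$, that is, $u\in K$. Since $w\in K$ and $u\in\mathcal{G}(w)$ were arbitrary, $\mathcal{G}(K)\subset K$.

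There is no real obstacle here; the lemma is a direct bookkeeping consequence of the Bielecki estimate. The one point to keep in view is that $\mathcal{G}(w)$ must be nonempty for the statement to have content. This follows from Proposition~\ref{prop:bk-selection}, although it is not needed for the inclusion itself.
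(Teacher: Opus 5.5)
Your proof is correct and matches the paper's argument: apply Lemma~\ref{lem:bk-bielecki} with $\lambda=\lambda_0$, then use that the first two terms equal $R/2$ and that $\sqrt2\,m_0/\lambda_0\le\tfrac12$ together with $\|w\|_\ast\le R$ to get $\|u\|_\ast\le R$. Your closing remark on nonemptiness is correct but, as you note, not needed for the inclusion.
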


\begin{proof}
	Let $w\in K$ and $u\in\mathcal{G}(w)$. Then, $\|w\|_\ast\le R$
, $u=S_0(v)+w_0$ with $v\in S^2_{\Phi,w}$, and due to Lemma~\ref{lem:bk-bielecki} with $\lambda=\lambda_0$ and the choice
	of $\lambda_0$,
	\[
	\|u\|_\ast
	\le\|w_0\|_\ast+\frac{\sqrt2\,m_0\sqrt{|Q|}}{\lambda_0}
	+\tfrac12\|w\|_\ast
	=\frac{R}{2}+\tfrac12\|w\|_\ast
	\le\frac{R}{2}+\frac{R}{2}=R.
	\]
	Hence $u\in K$.
\end{proof}

\begin{lemma}[Values of $\mathcal{G}$]
	\label{lem:bk-values}
	For every $w\in K$, the set $\mathcal{G}(w)$ is nonempty, closed,
	and convex in $L^2(Q)$.
\end{lemma}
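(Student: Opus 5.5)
The plan is to obtain each of the three properties from Proposition~\ref{prop:bk-selection}, using the fact that $\mathcal{G}(w)$ is the image of $S^2_{\Phi,w}$ under the affine map $A:v\mapsto S_0(v)+w_0$. Write
\[
\mathcal{G}(w)=A\bigl(S^2_{\Phi,w}\bigr),\qquad A(v):=S_0(v)+w_0 .
\]
As noted after \eqref{eq:max-reg-linear}, $S_0:L^2(Q)\to L^2(Q)$ is bounded and linear, so $A$ is affine and continuous. Membership $w\in K$ gives $w\in L^2(Q)$, which is all Proposition~\ref{prop:bk-selection} needs; that result tells us $S^2_{\Phi,w}$ is nonempty, closed and convex.

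\emph{Nonemptiness and convexity.} Nonemptiness is immediate: the image of a nonempty set is nonempty. For convexity, if $u_i=S_0(v_i)+w_0$ with $v_i\in S^2_{\Phi,w}$ and $\theta\in[0,1]$, then linearity of $S_0$ gives
\[
\theta u_1+(1-\theta)u_2=S_0\bigl(\theta v_1+(1-\theta)v_2\bigr)+w_0,
\]
and $\theta v_1+(1-\theta)v_2\in S^2_{\Phi,w}$ by convexity of the selection set.

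\emph{Closedness.} This is the only nontrivial step, because continuous affine images of closed sets need not be closed. Take $u_n=S_0(v_n)+w_0\in\mathcal{G}(w)$ with $u_n\to u$ in $L^2(Q)$. By \eqref{eq:bk-selection-bound}, $\|v_n\|_{L^2(Q)}^2\le 2m_0^2(|Q|+\|w\|_{L^2(Q)}^2)$, so $(v_n)$ is bounded. Extract a subsequence with $v_n\rightharpoonup v$ weakly in $L^2(Q)$. The set $S^2_{\Phi,w}$ is convex and strongly closed, hence weakly closed by Mazur's theorem, so $v\in S^2_{\Phi,w}$. Alternatively, apply Lemma~\ref{lem:bk-weak-closedness} with the constant sequence $w_n\equiv w$.

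Since $S_0$ is bounded and linear, it is weak--weak continuous, so $S_0(v_n)\rightharpoonup S_0(v)$. Then $u_n\rightharpoonup S_0(v)+w_0$. But $u_n\to u$ strongly, and weak limits are unique, so $u=S_0(v)+w_0\in\mathcal{G}(w)$. This proves closedness.

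The main obstacle is this closedness step; the bound \eqref{eq:bk-selection-bound} is exactly what supplies the weak compactness it needs. An alternative would be to argue through compactness of $S_0$ via Lemma~\ref{lem:compact-embedding}, but weak compactness suffices and is simpler.
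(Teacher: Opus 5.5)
Your proposal is correct and follows essentially the same route as the paper: nonemptiness and convexity come from Proposition~\ref{prop:bk-selection} together with linearity of $S_0$, and closedness comes from the bound \eqref{eq:bk-selection-bound}, a weakly convergent subsequence, weak continuity of $S_0$, and uniqueness of weak limits. The only difference is cosmetic: you conclude $v\in S^2_{\Phi,w}$ mainly via Mazur's theorem (a closed convex set is weakly closed), whereas the paper invokes Lemma~\ref{lem:bk-weak-closedness} with the stationary sequence $w_n\equiv w$, which you also give as an alternative.
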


\begin{proof}
	It follows from
	Proposition~\ref{prop:bk-selection} that $\mathcal{G}(w)$ is nonempty.
	Indeed, picking any $v\in S^2_{\Phi,w}$ gives $S_0(v)+w_0\in\mathcal{G}(w)$.
	
	We next turn to the proof of convexity. If $u_i=S_0(v_i)+w_0$ with $v_i\in S^2_{\Phi,w}$,
	$i=1,2$, and $\mu\in[0,1]$, then, by convexity of $S^2_{\Phi,w}$
	(Proposition~\ref{prop:bk-selection}) and linearity of $S_0$,
	\[
	\mu u_1+(1-\mu)u_2
	=S_0\bigl(\mu v_1+(1-\mu)v_2\bigr)+w_0\in\mathcal{G}(w).
	\]
	Therefore, $\mathcal{G}(w)$ is convex.
	
	Finally, let $\{u_n\}_{n\in\mathbb{N}}\subset\mathcal{G}(w)$ with $u_n\to u$
	strongly in $L^2(Q)$, and write $u_n=S_0(v_n)+w_0$ with
	$v_n\in S^2_{\Phi,w}$. Since $w$ is fixed, \ref{P3} and
	\eqref{eq:bk-selection-bound} give a uniform bound
	$\|v_n\|_{L^2(Q)}\le C(w)$. By reflexivity of $L^2(Q)$, a
	subsequence of $\{v_n\}_{n\in\mathbb{N}}$, for which we use the same labeling,
	satisfies $v_n\rightharpoonup v$ weakly
	in $L^2(Q)$. By weak continuity of $S_0$,
	$S_0(v_n)\rightharpoonup S_0(v)$ weakly in $L^2(Q)$; since
	$u_n\to u$ strongly, uniqueness of weak limits yields
	$u=S_0(v)+w_0$. Applying Lemma~\ref{lem:bk-weak-closedness} with the
	stationary sequence $w_n\equiv w$ gives $v\in S^2_{\Phi,w}$, whence
	$u\in\mathcal{G}(w)$. We conclude that $\mathcal{G}(w)$ is closed.
\end{proof}

\begin{lemma}[Relative compactness of the range]
	\label{lem:bk-compact}
	$\overline{\mathcal{G}(K)}$ is a compact subset of $L^2(Q)$.
\end{lemma}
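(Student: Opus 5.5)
The plan is to show that $\mathcal{G}(K)$ is a bounded subset of $\mathcal{W}(0,T)$ and then invoke the compact embedding of Lemma~\ref{lem:compact-embedding}.

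\emph{Step 1: uniform bound on the selections.} Let $u\in\mathcal{G}(K)$. Then $u=S_0(v)+w_0$ for some $w\in K$ and some $v\in S^2_{\Phi,w}$. By Lemma~\ref{lem:bk-self-map} and the equivalence \eqref{eq:bk-norm-equiv}, every $w\in K$ satisfies $\|w\|_{L^2(Q)}\le R'$. The selection bound \eqref{eq:bk-selection-bound} then gives
\[
\|v\|_{L^2(Q)}^2\le 2m_0^2\bigl(|Q|+R'^2\bigr)=:M_v^2,
\]
which is independent of $w\in K$ and of the chosen selection $v$.

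\emph{Step 2: bound in $\mathcal{W}(0,T)$.} Apply the maximal-regularity estimate \eqref{eq:max-reg-linear} to $S_0$, which corresponds to zero initial datum. This yields $\|S_0(v)\|_{\mathcal{W}}\le C_{\mathrm{par}}M_v$. Since $w_0=S(0)\in\mathcal{W}(0,T)$ is a fixed element, the triangle inequality gives
\[
\|u\|_{\mathcal{W}}\le C_{\mathrm{par}}M_v+\|w_0\|_{\mathcal{W}}=:M
\]
for every $u\in\mathcal{G}(K)$. So $\mathcal{G}(K)$ is bounded in $\mathcal{W}(0,T)$.

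\emph{Step 3: compactness.} By Lemma~\ref{lem:compact-embedding}, bounded subsets of $\mathcal{W}(0,T)$ are relatively compact in $C([0,T];L^2(\Omega))$. That space embeds continuously into $L^2(Q)$, since $\|g\|_{L^2(Q)}\le\sqrt{T}\,\sup_t\|g(t)\|_{L^2(\Omega)}$. The continuous image of a relatively compact set is relatively compact, so $\mathcal{G}(K)$ is relatively compact in $L^2(Q)$. Because $L^2(Q)$ is a complete metric space, its closure $\overline{\mathcal{G}(K)}$ is therefore compact. This is also sequentially compact, which is exactly hypothesis (b) of Theorem~\ref{thm:bk-fixed-point}.

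The only step that needs care is Step 1. The bound on $v$ must be uniform over all of $K$, not merely for a fixed $w$ as in Lemma~\ref{lem:bk-values}. This uniformity comes from the $L^2$-boundedness of $K$ via $R'$. With that in place, the remaining steps are routine.
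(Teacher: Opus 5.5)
Your proof is correct and follows the paper's argument essentially step for step. The steps are: a bound on the selections that is uniform over $K$, from $\|w\|_{L^2(Q)}\le R'$ and \eqref{eq:bk-selection-bound}; boundedness of $\mathcal{G}(K)$ in $\mathcal{W}(0,T)$ from the boundedness of $S_0$; and the compact embedding of Lemma~\ref{lem:compact-embedding} composed with $C([0,T];L^2(\Omega))\hookrightarrow L^2(Q)$. One small correction: the bound $\|w\|_{L^2(Q)}\le R'$ comes directly from the definition of $K$ and \eqref{eq:bk-norm-equiv}, not from Lemma~\ref{lem:bk-self-map}, which plays no role in this step.
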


\begin{proof}
	Let $u\in\mathcal{G}(w)$ for some $w\in K$. Then $u=S_0(v)+w_0$ with
	$v\in S^2_{\Phi,w}$ and by \eqref{eq:bk-selection-bound} and estimate
	$\|w\|_{L^2(Q)}\le R'$ we have the bound
	\[
	\|v\|_{L^2(Q)}^2\le2m_0^2\bigl(|Q|+R'^2\bigr)=:M^2,
	\]
	which is uniform over $w\in K$ and over the choice of selection $v$.
	Since $S_0:L^2(Q)\to\mathcal{W}(0,T)$ is bounded,
	\[
	\|u\|_{\mathcal{W}(0,T)}
	\le\|S_0\|\,M+\|w_0\|_{\mathcal{W}(0,T)},
	\]
	so $\mathcal{G}(K)$ is bounded in $\mathcal{W}(0,T)$ (here $\|S_0\|$ denotes the norm of $S_0$ in the space of continuous linear operators acting from $L^2(Q)$ to $\mathcal{W}(0,T)$). By the compact
	embedding $\mathcal{W}(0,T)\hookrightarrow\hookrightarrow
	C([0,T];L^2(\Omega))$ of Lemma~\ref{lem:compact-embedding}, and hence
	into $L^2(Q)$, bounded subsets of $\mathcal{W}(0,T)$ are relatively
	compact in $L^2(Q)$. Thus $\overline{\mathcal{G}(K)}$ is compact in
	$L^2(Q)$.
\end{proof}

\begin{lemma}[Sequential closedness of the graph]
	\label{lem:bk-graph}
	If $\{w_n\}_{n\in\mathbb{N}}\subset K$, $u_n\in\mathcal{G}(w_n)$, $w_n\to w$ and
	$u_n\to u$ strongly in $L^2(Q)$, then $u\in\mathcal{G}(w)$.
\end{lemma}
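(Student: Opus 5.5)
The plan is to write each $u_n$ as $u_n=S_0(v_n)+w_0$ with $v_n\in S^2_{\Phi,w_n}$, extract a weakly convergent subsequence of the selections, identify the limit of $S_0(v_n)$, and then apply Lemma~\ref{lem:bk-weak-closedness}. The argument mirrors the closedness part of the proof of Lemma~\ref{lem:bk-values}, except that now $w_n$ varies.

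\textbf{Uniform bound.} Since $w_n\in K$, each $w_n$ satisfies $\|w_n\|_{L^2(Q)}\le R'$. Estimate \eqref{eq:bk-selection-bound} then gives
\[
\|v_n\|_{L^2(Q)}^2\le 2m_0^2\bigl(|Q|+R'^2\bigr)=M^2 ,
\]
uniformly in $n$, exactly as in the proof of Lemma~\ref{lem:bk-compact}. (Boundedness of $\{w_n\}$ would in any case follow from $w_n\to w$ in $L^2(Q)$.) Because $L^2(Q)$ is reflexive, a subsequence, not relabeled, satisfies $v_n\rightharpoonup v$ weakly in $L^2(Q)$.

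\textbf{Identifying the limit.} The map $S_0:L^2(Q)\to L^2(Q)$ is bounded and linear, so it is weakly continuous. Hence $S_0(v_n)\rightharpoonup S_0(v)$, and therefore $u_n=S_0(v_n)+w_0\rightharpoonup S_0(v)+w_0$. On the other hand, $u_n\to u$ strongly, and weak limits are unique, so $u=S_0(v)+w_0$.

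\textbf{Admissibility of $v$.} It remains to show that $v\in S^2_{\Phi,w}$. This follows from Lemma~\ref{lem:bk-weak-closedness}, whose hypotheses are now all in place: (i) $w_n\to w$ strongly in $L^2(Q)$, which still holds along the subsequence; (ii) $v_n\in S^2_{\Phi,w_n}$; and (iii) $v_n\rightharpoonup v$. We conclude that $u\in\mathcal{G}(w)$.

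The only genuinely delicate step is this passage to the limit in the set-valued term, where $v_n$ converges merely weakly while the sets $\Phi(w_n)$ move. That difficulty has already been handled by Lemma~\ref{lem:bk-weak-closedness}, using Mazur's lemma together with the upper semicontinuity and convex values of $\Phi$. The one point to check in the present proof is that passing to subsequences is harmless: every subsequence of $(w_n,u_n)$ still converges to $(w,u)$, so the identification of $u$ is unaffected. Finally, $w\in K$ because $K$ is closed, although membership $u\in\mathcal{G}(w)$ does not even require this.
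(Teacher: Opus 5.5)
Your proposal is correct and follows the paper's proof essentially step by step. You bound the selections $v_n$ uniformly using $w_n\in K$ and \eqref{eq:bk-selection-bound}, extract a weakly convergent subsequence, identify $u=S_0(v)+w_0$ by weak continuity of $S_0$ and uniqueness of weak limits, and conclude $v\in S^2_{\Phi,w}$ via Lemma~\ref{lem:bk-weak-closedness}.
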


\begin{proof}
	Write $u_n=S_0(v_n)+w_0$ with $v_n\in S^2_{\Phi,w_n}$. By
	\eqref{eq:bk-selection-bound} and the boundedness of
	$\{w_n\}_{n\in\mathbb{N}}\subset K$ in $L^2(Q)$, the sequence $\{v_n\}_{n\in\mathbb{N}}$ is bounded in
	$L^2(Q)$. By reflexivity, a subsequence of $\{v_n\}_{n\in\mathbb{N}}$, for which we use the same labeling, satisfies
	$v_n\rightharpoonup v$ weakly in $L^2(Q)$. By weak continuity
	of $S_0$, $S_0(v_n)\rightharpoonup S_0(v)$ weakly in $L^2(Q)$; on
	the other hand $S_0(v_n)=u_n-w_0\to u-w_0$ strongly, hence weakly.
	By uniqueness of weak limits, $u=S_0(v)+w_0$. Since $w_n\to w$
	strongly and $v_n\rightharpoonup v$ weakly,
	Lemma~\ref{lem:bk-weak-closedness} gives $v\in S^2_{\Phi,w}$, hence
	$u\in\mathcal{G}(w)$.
\end{proof}

\begin{theorem}[Existence of strong solutions]
	\label{thm:existence-pde}
	Assume {\rm \ref{P1}--\ref{P3}}. Then problem
	\eqref{eq:parabolic-inclusion-background} admits at least one strong
	solution, i.e., $\mathcal{S}(u_0)\neq\varnothing$.
\end{theorem}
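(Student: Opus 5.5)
The plan is to apply the Bohnenblust--Karlin theorem (Theorem~\ref{thm:bk-fixed-point}) in the Banach space $X:=L^2(Q)$ with $D:=K$ and with $\mathcal{G}$ restricted to $K$, as defined in \eqref{eq:bk-G-def}. Once a fixed point $w^\ast\in\mathcal{G}(w^\ast)$ is found, the observation right after \eqref{eq:bk-G-def} shows that $u:=w^\ast$ lies in $\mathcal{S}(u_0)$.

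First I check that $D$ qualifies. The set $K$ is nonempty and convex. It is closed in $L^2(Q)$ because $\|\cdot\|_\ast$ is a norm equivalent to the $L^2(Q)$-norm by \eqref{eq:bk-norm-equiv}, so $K$ is a closed ball of an equivalent norm.

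Next I verify the hypotheses on $\mathcal{G}$ using the preceding lemmas.
\begin{itemize}
\item Strictness, closedness of values and convexity of values on $K$ come from Lemma~\ref{lem:bk-values}.
\item The inclusion $\mathcal{G}(K)\subset K$ is Lemma~\ref{lem:bk-self-map}.
\item Hypothesis (a), sequential closedness of the graph in $K\times L^2(Q)$, is Lemma~\ref{lem:bk-graph}. A limit $w$ of a sequence in $K$ stays in $K$ because $K$ is closed.
\item Hypothesis (b) follows from Lemma~\ref{lem:bk-compact}: $\mathcal{G}(K)$ lies in the compact set $\overline{\mathcal{G}(K)}$, and compactness in the metric space $L^2(Q)$ is equivalent to sequential compactness.
\end{itemize}

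Theorem~\ref{thm:bk-fixed-point} then gives $w^\ast\in K$ with $w^\ast=S_0(v^\ast)+w_0$ for some $v^\ast\in S^2_{\Phi,w^\ast}$. I would finish by checking items (a)--(c) of Definition~\ref{def:strong-solution-pde}.
\begin{itemize}
\item $u=w^\ast\in\mathcal{W}(0,T)$ because both $S_0(v^\ast)$ and $w_0$ belong to $\mathcal{W}(0,T)$.
\item $u(0)=u_0$ because $S_0(v^\ast)(0)=0$ and $w_0(0)=u_0$; the trace at $t=0$ makes sense by Lemma~\ref{lem:compact-embedding}.
\item The boundary condition holds automatically by \eqref{eq:W-cont-embedding}.
\item By linearity, $\partial_t u-\Delta u=v^\ast+0=v^\ast$ a.e.\ in $Q$, and $v^\ast(t,x)\in\Phi(u(t,x))$ a.e.\ by the definition of $S^2_{\Phi,w^\ast}$.
\end{itemize}

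The only point needing care is the identification of $\mathcal{G}(w^\ast)$, which is defined through $L^2(Q)$-elements, with a genuine $\mathcal{W}(0,T)$-solution. This amounts to noting that the fixed-point identity $w^\ast=S_0(v^\ast)+w_0$ holds in $L^2(Q)$, so $w^\ast$ coincides a.e.\ with the $\mathcal{W}$-function on the right-hand side, and we take that representative. The substantive analytic difficulties (weak--strong closedness and invariance via the Bielecki norm) have already been handled in the lemmas, so this step is bookkeeping.
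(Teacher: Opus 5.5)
Your proposal is correct and follows the paper's proof. You apply Bohnenblust--Karlin in $X=L^2(Q)$ with $D=K$, take the hypotheses from Lemmas~\ref{lem:bk-values}, \ref{lem:bk-self-map}, \ref{lem:bk-graph} and~\ref{lem:bk-compact}, and read off the strong solution from $w^\ast=S_0(v^\ast)+w_0$. Your additional remarks (closedness of $K$ via the equivalent norm, compactness versus sequential compactness, choosing the $\mathcal{W}(0,T)$ representative) are accurate bookkeeping that the paper leaves implicit.
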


\begin{proof}
	$K$ is a nonempty closed convex subset of the Banach space $L^2(Q)$.
	By Lemma~\ref{lem:bk-values}, $\mathcal{G}(w)$ is nonempty, closed,
	and convex for every $w\in K$, and by Lemma~\ref{lem:bk-self-map},
	$\mathcal{G}(K)\subset K$. By Lemma~\ref{lem:bk-graph}, the graph of
	$\mathcal{G}$ is sequentially closed in $K\times L^2(Q)$, which is
	hypothesis \textup{(a)} of Theorem~\ref{thm:bk-fixed-point}; by
	Lemma~\ref{lem:bk-compact}, $\mathcal{G}(K)$ is contained in the
	compact set $\overline{\mathcal{G}(K)}\subset L^2(Q)$, which is
	hypothesis \textup{(b)}. Theorem~\ref{thm:bk-fixed-point}, applied
	with $X=L^2(Q)$ and $D=K$, yields $w^\ast\in K$ with
	$w^\ast\in\mathcal{G}(w^\ast)$, i.e.\ $w^\ast=S_0(v^\ast)+w_0$ for
	some $v^\ast\in S^2_{\Phi,w^\ast}$. Setting $u:=w^\ast$, we obtain
	$u\in\mathcal{W}(0,T)$ with $\partial_t u-\Delta u=v^\ast\in\Phi(u)$
	a.e.\ on $Q$ and $u(0)=u_0$, so $u\in\mathcal{S}(u_0)$ by
	Definition~\ref{def:strong-solution-pde}.
\end{proof}

\begin{remark}
	Theorem~\ref{thm:existence-pde} asserts existence only. Since $\Phi$
	is merely upper semicontinuous, $\mathcal{S}(u_0)$ need not be a
	singleton in general, and the fixed-point argument makes no claim
	about, and does not need, uniqueness of the selection $v^\ast$ or of
	the resulting strong solution. For the relay example of
	Section~\ref{subsec:parabolic-PDI}, uniqueness does hold, but for a
	structural reason unrelated to the fixed-point argument: the
	equation is a subgradient flow of a convex energy; see
	Remark~\ref{rem:relay-wellposed}.
\end{remark}

\begin{remark}
	If $\Phi$ is, in addition, \emph{lower} semicontinuous, a
	Michael-type selection theorem (\cite[Lemma~2.1]{deimling_1992},
	applied with $D=X=\mathbb{R}$) provides a continuous selection
	$f(s)\in\Phi(s)$, which by \ref{P3} inherits the linear growth
	condition $|f(s)|\le m_0(1+|s|)$; problem
	\eqref{eq:parabolic-inclusion-background} then reduces to the
	semilinear equation $\partial_t u-\Delta u=f(u)$, solvable by a
	standard Schaefer fixed-point argument. Requiring both upper and
	lower semicontinuity throughout would, however, make the
	multifunction continuous and would exclude the relay nonlinearity of
	Section~\ref{subsec:parabolic-PDI}, which admits no continuous
	selection; this is precisely why Theorem~\ref{thm:existence-pde} is
	formulated under upper semicontinuity alone.
\end{remark}

	\section{A General Distance-Residual Principle}
	\label{sec:principle}
	
	Assume that $X$ and $Y$ are Banach spaces and consider a differential operator $\mathcal{D}:X\to Y$. Let $\mathcal{F}:X\rightrightarrows Y$ be strict and closed-valued. We consider the abstract
	set-valued differential problem
	\begin{equation}\label{eq:abstract-inclusion}
		\mathcal{D}(u)\in\mathcal{F}(u).
	\end{equation}
	Special cases of \eqref{eq:abstract-inclusion} include differential
	inclusions $\dot{x}(t)\in F(t,x(t))$ and parabolic partial differential
	inclusions $\partial_t u-\Delta u\in\Phi(u)$.
	
	\begin{definition}[Distance residual]
		Let $X$ and $Y$ be Banach spaces and assume that $\mathcal{D}:X\to Y$ is a differential operator. Let $\mathcal{F}:X\rightrightarrows Y$ be strict and
		closed-valued.
		For any $u\in X$, the \emph{distance residual} of $u$ with respect to
		\eqref{eq:abstract-inclusion} is defined as
		\[
		R(u):=\dist^2\bigl(\mathcal{D}(u),\,\mathcal{F}(u)\bigr)
		= \inf_{v\in \mathcal{F}(u)} \|\mathcal{D}(u) - v \|_Y^2.
		\]
		In the concrete settings considered below, the elements of $X$ and
		$Y$ are functions of a variable $z$ ranging over a time interval or a
		space--time cylinder $Z$, and the multifunction $\mathcal{F}$ acts
		through a pointwise set-valued map: there is a multifunction
		$G:Z\times\mathbb{R}^m\rightrightarrows\mathbb{R}^k$ such that
		$\mathcal{F}$ is the associated Nemytskii (superposition) operator,
		\[
		\mathcal{F}(u)
		=
		\bigl\{v\in Y:\ v(z)\in G(z,u(z))\ \text{for a.e. } z\in Z\bigr\},
		\]
		and the inclusion \eqref{eq:abstract-inclusion} is imposed pointwise.
		In that case we define the \emph{pointwise distance residual}
		\[
		r(u;z)
		:=
		\dist^2\bigl((\mathcal{D}u)(z),\,G(z,u(z))\bigr),
		\]
		and the associated DR-PINN loss is
		\[
		\mathcal{L}(\theta)
		:=
		\frac{1}{N}\sum_{i=1}^{N} r(u_\theta;z_i),
		\]
		where $u_\theta$ is a neural-network surrogate and $\{z_i\}_{i=1}^N$ are
		collocation points. Note that $R(u)$ is a single number attached to $u$,
		whereas $r(u;\cdot)$ is a function of the collocation variable; the loss
		$\mathcal{L}$ is an empirical collocation (quadrature) approximation of
		the continuous residual functional $\int_Z r(u_\theta;z)\,dz$
		(in the experiments below, both deterministic uniform grids and
		adaptively re-weighted samples are used).
	\end{definition}

	\begin{remark}[Collocation loss versus continuous functional]
		\label{rem:collocation-gap}
		The consistency results of
		Sections~\ref{subsec:consistency} and~\ref{sec:pde-consistency} are
		formulated for the continuous distance-residual functional
		$\mathcal{J}$, whereas training minimizes the finite-collocation loss
		$\mathcal{L}$. An implication
		passing from $\mathcal{L}(\theta_n)\to0$ to
		$\mathcal{J}(u_{\theta_n})\to0$ requires additional control of the
		sampling scheme and of the regularity of $r(u_\theta;\cdot)$; it is not
		part of the consistency theorems. One sufficient bridge is the
		following. If $Z$ is compact with $|Z|<\infty$, the maps
		$z\mapsto r(u_{\theta_n};z)$ share a Lipschitz constant $M$ on $Z$,
		the collocation sets $\{z_i^{(n)}\}$ have fill distance
		$h_n:=\sup_{z\in Z}\min_i|z-z_i^{(n)}|\to0$, and the
		\emph{maximum} pointwise residual satisfies
		$\max_i r(u_{\theta_n};z_i^{(n)})\to0$, then
		\[
		\sup_{z\in Z}r(u_{\theta_n};z)
		\le\max_i r(u_{\theta_n};z_i^{(n)})+Mh_n\longrightarrow0,
		\]
		and hence $\int_Z r(u_{\theta_n};z)\,dz\to0$, as $n\to\infty$. This condition controls
		the maximum collocation residual; the mean loss $\mathcal{L}$ alone does
		not give the same bound. Accordingly, the dense-grid validations in
		Section~\ref{sec:numerics} report both mean and maximal residuals.
	\end{remark}

	\begin{remark}
		Under the hypothesis of the previous definition,
		the distance residual possesses the following fundamental properties:
		\begin{enumerate}
			\item Non-negativity: $R(u)\ge 0$ for all $u\in X$.
			\item Exactness: if $\mathcal{F}(u)$ is closed, then
			$R(u)=0 \iff \mathcal{D}(u)\in\mathcal{F}(u)$.
		\end{enumerate}
		Moreover, $R(u)^{1/2}$ admits a natural geometric interpretation: it represents the distance from $\mathcal{D}(u)$ to the admissible set $\mathcal{F}(u)$, i.e., the \emph{infimal} correction needed to satisfy the inclusion at $u$. In a general Banach space the infimum need not be attained; it is attained (and the correction is then a genuine minimum, realized by the metric projection) when $\mathcal{F}(u)$ is closed and convex and $Y$ is a Hilbert space, cf.\ Section~\ref{sec:projection}.
	\end{remark}
	
	\begin{remark}
		Since we are assuming that $\mathcal{F}$ is strict and closed-valued,
		when $\mathcal{F}(u)$ is convex and $Y$ is a Hilbert space, the
		projection $P_{\mathcal{F}(u)}\bigl(\mathcal{D}(u)\bigr)$ onto $\mathcal{F}(u)$ is uniquely
		defined (cf. Section~\ref{sec:projection}), and
		\[
		R(u)
		=
		\bigl\|\mathcal{D}(u)-P_{\mathcal{F}(u)}\bigl(\mathcal{D}(u)\bigr)\bigr\|_Y^{2}.
		\]
		For a \emph{fixed} closed convex set $C\subset Y$, the map
		$y\mapsto\dist^2(y,C)$ is Fr\'echet differentiable and
		\[
		\nabla_y\dist^2(y,C)=2\bigl(y-P_C(y)\bigr).
		\]
		This projection-gradient identity is used in the numerical
		implementations. It differentiates with respect to the first argument
		while holding the set fixed. When $\mathcal{F}(u)$ depends on $u$,
		computing $\nabla R(u)$ also requires differentiating that state
		dependence (or rewriting it explicitly, as in the translated-set
		examples below).
	\end{remark}
	
	When $\mathcal{F}(u)=\{f(u)\}$ is a singleton,
	$R(u)=\|\mathcal{D}(u)-f(u)\|_Y^2$, which is the standard PINN residual.
	
	\subsection{A general consistency result}
	
	The basic closed-graph argument is the following: sufficiently strong
	convergence together with a vanishing distance residual forces the limit
	to satisfy the target inclusion.
	
	\begin{lemma}\label{lem:closed-graph}
		Let $X$ and $Y$ be Banach spaces and let $\mathcal{F}: X \rightrightarrows Y$ be a strict set-valued function with closed graph.
		Suppose that $x_n \to x$ in $X$, $y_n \to y$ in $Y$, and
		\begin{equation} \label{eq:closed-graph-dist}
			\dist(y_n, \mathcal{F}(x_n)) \to 0.
		\end{equation}
		Then $y \in \mathcal{F}(x)$.
	\end{lemma}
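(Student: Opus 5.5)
The plan is to build an auxiliary sequence inside the graph of $\mathcal{F}$ that converges to $(x,y)$, and then invoke closedness of the graph. Since $\mathcal{F}$ is strict, each $\mathcal{F}(x_n)$ is nonempty, so $\dist(y_n,\mathcal{F}(x_n))$ is a finite infimum. We do not know it is attained, because $Y$ is a general Banach space and no convexity or compactness is assumed. We therefore use near-minimizers instead of projections. For each $n$, by definition of the infimum, we choose $v_n\in\mathcal{F}(x_n)$ with
\[
\|y_n-v_n\|_Y\le \dist\bigl(y_n,\mathcal{F}(x_n)\bigr)+\tfrac1n .
\]
This choice needs only countable choice and no measurable-selection machinery.

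Next we show $v_n\to y$. The triangle inequality gives
\[
\|v_n-y\|_Y\le\|v_n-y_n\|_Y+\|y_n-y\|_Y\le \dist\bigl(y_n,\mathcal{F}(x_n)\bigr)+\tfrac1n+\|y_n-y\|_Y .
\]
The first term tends to $0$ by \eqref{eq:closed-graph-dist}, the second trivially, and the third because $y_n\to y$ in $Y$. Hence $(x_n,v_n)\in\gr(\mathcal{F})$ and $(x_n,v_n)\to(x,y)$ in $X\times Y$ with the product topology.

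Finally, closedness of the graph is used sequentially. In the metric (normed) space $X\times Y$, topological closedness is equivalent to sequential closedness. Therefore the limit $(x,y)$ lies in $\gr(\mathcal{F})$, that is, $y\in\mathcal{F}(x)$.

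There is no serious obstacle here. The one point requiring care is to avoid assuming that the distance is attained by a projection, which is handled by the $1/n$-near-minimizers above. Note also that strictness is what makes the distance finite and the choice of $v_n$ possible. Nothing else from the earlier results is needed.
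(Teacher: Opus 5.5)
Your proof is correct and is essentially the paper's argument: the paper also picks $1/n$-near-minimizers $a_n\in\mathcal{F}(x_n)$ rather than projections, deduces $a_n\to y$ from the triangle inequality, and concludes $(x,y)\in\gr(\mathcal{F})$ from closedness of the graph. Your added remarks, that strictness makes the distance finite and that sequential closedness suffices in the metrizable space $X\times Y$, are accurate details the paper leaves implicit.
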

	
	\begin{proof}
		By the definition of the distance to a set,
		for each $n \in \mathbb{N}$ there exists $a_n \in \mathcal{F}(x_n)$ such that
		$$
		\|y_n - a_n\|_Y \leq \text{dist}(y_n, \mathcal{F}(x_n)) + \frac{1}{n}.
		$$
		From this and \eqref{eq:closed-graph-dist} we deduce that
		\[
		\|y_n - a_n\|_Y \to 0 \quad \text{as } n \to \infty.
		\]
		Consequently, since $y_n \to y$, we have
		\[
		a_n = y_n - (y_n - a_n) \to y \quad \text{as } n \to \infty.
		\]
		Now, since $a_n \in \mathcal{F}(x_n)$, we have $(x_n, a_n) \in \text{gr}(\mathcal{F})$ for all $n\in\mathbb{N}$.
		Since $\text{gr}(\mathcal{F})$ is closed, $x_n \to x$, and $a_n \to y$, it follows that $(x, y) \in \text{gr}(\mathcal{F})$,
		which gives $y \in \mathcal{F}(x)$.
	\end{proof}
	
	\begin{theorem}[General consistency]\label{thm:general-consistency}
		Let $X$, $Y$ be Banach spaces, assume that $\mathcal{D}:X\to Y$ is continuous,
		and let $\mathcal{F}:X\rightrightarrows Y$ be strict and closed (that is, with closed graph).
		Let $\{u_n\}_{n\in\mathbb{N}}\subset X$ satisfy $u_n\to u$ in $X$ and
		\[
		\dist(\mathcal{D}(u_n),\mathcal{F}(u_n))\to 0.
		\]
		Then
		$\mathcal{D}(u)\in\mathcal{F}(u)$.
	\end{theorem}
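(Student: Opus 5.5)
The plan is to reduce the statement directly to Lemma~\ref{lem:closed-graph}. We apply that lemma with the choices $x_n:=u_n$, $x:=u$, $y_n:=\mathcal{D}(u_n)$ and $y:=\mathcal{D}(u)$. Its hypotheses then have to be checked one at a time, and the conclusion $y\in\mathcal{F}(x)$ is exactly the claim $\mathcal{D}(u)\in\mathcal{F}(u)$.

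First, $x_n\to x$ in $X$ holds by assumption. Second, since $\mathcal{D}:X\to Y$ is continuous and $u_n\to u$ in $X$, we get $y_n=\mathcal{D}(u_n)\to\mathcal{D}(u)=y$ in $Y$. Third, the hypothesis $\dist(\mathcal{D}(u_n),\mathcal{F}(u_n))\to0$ is literally condition \eqref{eq:closed-graph-dist} written in the new variables. Finally, $\mathcal{F}$ is strict with closed graph, which is exactly the standing assumption of Lemma~\ref{lem:closed-graph}.

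There is no substantive obstacle. The only point that needs care is that the lemma uses closedness of the graph in the product $X\times Y$ with respect to strong convergence in both factors. This is why we need strong convergence $u_n\to u$ in $X$, and why continuity of $\mathcal{D}$ is required, so that $\mathcal{D}(u_n)$ converges strongly in $Y$.

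It is worth remarking afterwards where the real difficulty lies in the concrete settings. There, the issue is producing such a strongly convergent sequence in the first place, via compactness, and possibly having only weak convergence of $\mathcal{D}(u_n)$. That situation requires convexity arguments as in Lemma~\ref{lem:bk-weak-closedness} rather than the abstract closed-graph argument.
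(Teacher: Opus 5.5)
Your proposal is correct and is exactly the paper's argument. Continuity of $\mathcal{D}$ gives $\mathcal{D}(u_n)\to\mathcal{D}(u)$ in $Y$, and Lemma~\ref{lem:closed-graph} applied with $x_n=u_n$ and $y_n=\mathcal{D}(u_n)$ then yields $\mathcal{D}(u)\in\mathcal{F}(u)$.
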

	
	\begin{proof}
		Since $\mathcal{D}$ is continuous, $\mathcal{D}(u_n)\to\mathcal{D}(u)$. The result then follows by applying Lemma~\ref{lem:closed-graph} with $x_n=u_n$ and $y_n=\mathcal{D}(u_n)$.
	\end{proof}
	
	Theorem~\ref{thm:general-consistency} requires strong convergence of both
	$u_n$ and $\mathcal{D}(u_n)$. In the ODE and PDE settings of
	Sections~\ref{subsec:consistency} and~\ref{sec:pde}, the differential
	operators converge only weakly. The corresponding consistency results
	therefore use compactness of the states together with weak convergence of
	the operators and lower semicontinuity of the distance-residual functional,
	in the spirit of the direct method of the calculus of variations.

	\section{DR-PINNs for Differential Inclusions}
	\label{sec:distance-residual}
	
	\subsection{Formulation}
	\label{sec:description-formulation}
	
	Consider the differential inclusion \eqref{eq:inclusion} with $F:[0,T]\times\mathbb{R}^d\rightrightarrows\mathbb{R}^d$ satisfying
	the hypotheses of Theorem~\ref{thm:existence}.  The classical PINN
	residual
	\[
	\dot{x}(t)-f(t,x(t))
	\]
	has no analogue when $F$ is set-valued and lacks a single-valued
	selection.  The natural replacement is the distance from the computed
	velocity to the admissible set:
	\begin{equation}\label{eq:inclusion-residual}
		R_F(t;\theta)
		:=
		\dist^2\bigl(\dot{x}_\theta(t),\,F(t,x_\theta(t))\bigr),
	\end{equation}
	where $x_\theta(t)$ is a neural-network surrogate and $\dot{x}_\theta(t)$
	is obtained by automatic differentiation, exactly as in classical PINNs.
	
	\begin{remark}
		The assumptions that $F(t,x)$ is nonempty and closed ensure that the
		distance is finite and attained: for every $v\in\mathbb{R}^d$ there
		exists $\Pi_{F(t,x)}(v)\in F(t,x)$ with
		\[
		\dist(v,F(t,x))=\bigl|v-\Pi_{F(t,x)}(v)\bigr|.
		\]
		Closedness gives the following exactness property:
		\[
		\dist(v,F(t,x))=0 \,\,\iff\,\, v\in F(t,x).
		\]
		When $F(t,x)$ is also convex,
		the projection $\Pi_{F(t,x)}(v)$ is unique (see Section~\ref{sec:projection}).
		The squared distance becomes
		\begin{equation}\label{loss:distance}
			\dist^2(v,F(t,x))=\bigl|v-\Pi_{F(t,x)}(v)\bigr|^2,
		\end{equation}
		and its gradient with respect to $v$ is
		\[
		\nabla_v\dist^2(v,F(t,x))=2(v-\Pi_{F(t,x)}(v)),
		\]
		which is well-defined and Lipschitz. This identity differentiates
		only with respect to $v$, with $F(t,x)$ held fixed. If the admissible
		set depends on the learned state $x_\theta$, the corresponding
		chain-rule term must be retained in the implementation; exact
		formulas for the practically relevant cases, and the limits of plain
		differentiability for general moving sets, are given in
		Remark~\ref{rem:moving-set-gradient} below.
	\end{remark}
	
	The DR-PINN loss for the differential inclusion \eqref{eq:inclusion} is
	\begin{equation}\label{eq:main-distance-loss}
		\mathcal{L}_F(\theta)
		=
		\frac{1}{N}\sum_{i=1}^{N}
		\dist^2\bigl(\dot{x}_\theta(t_i),\,F(t_i,x_\theta(t_i))\bigr),
	\end{equation}
	where $\{t_i\}_{i=1}^N\subset[0,T]$ are collocation points.  Boundary
	and initial conditions are incorporated additively, so that
	the total loss then takes the form
	\[
	\mathcal{L}(\theta)
	=
	\lambda_F\,\mathcal{L}_F(\theta)
	+
	\lambda_0\,|x_\theta(0)-x_0|^2,
	\]
	where $\lambda_F,\lambda_0>0$ are user-specified weights.
	Given a neural network $N_\theta(t)$ we set
	\[
	x_\theta(t)=x_0+t\,N_\theta(t),
	\]
	which enforces $x_\theta(0)=x_0$, so that only the inclusion residual
	remains in the loss \eqref{eq:main-distance-loss}, that is, we can take
	$\mathcal{L}(\theta) = \mathcal{L}_F(\theta)$,
	which is more convenient for numerical approximations.
	Alternatively, the
	initial constraint can be penalized as a distance to a singleton.

	\subsection{Computing the Projection onto a General Convex Set}
	\label{sec:projection-extreme-points}
	
	To evaluate the loss \eqref{eq:main-distance-loss} one must compute
	the projection onto $F(t,x)$.  For simple sets
	(intervals, balls, boxes) this is explicit.  For a large and practically
	relevant class of admissible sets --- those that are, or can be
	approximated by, convex hulls of finitely many vertices --- the
	projection is computable by a small quadratic program, as we now
	describe.
	
	Suppose that $F(t,x)=\operatorname{conv}(V(t,x))$, where $\operatorname{conv}$ denotes the convex hull and
	\[
	V(t,x)=\{v_1(t,x),\dots,v_K(t,x)\}\subset\mathbb{R}^d.
	\]
	This covers
	any set of the controlled form $F(t,x)=\{f(t,x,u):u\in U\}$ with $U$ a
	polytope, \emph{provided} $f(t,x,\cdot)$ is affine in $u$: the image of
	a polytope under an affine map is again a polytope, with vertices among
	the images of the vertices. For a general nonlinear $f$ the image of a
	polytope need not be a polytope, nor even convex, and the construction
	below does not apply verbatim. The framework also covers
	any convex set discretized by a finite point cloud.
	For every $(t,x)\in[0,T]\times\mathbb{R}^d$, the projection of
	$v\in\mathbb{R}^d$ onto
	$F=\operatorname{conv}(V(t,x))$ satisfies
	\begin{equation}\label{eq:projection-as-qp}
		\Pi_F(v)
		=
		\sum_{k=1}^K\lambda_k^\ast v_k(t,x),
		\qquad
		\lambda^\ast
		\in
		\operatorname*{argmin}_{\lambda\in\Delta_K}
		\Bigl|v-\sum_{k=1}^K\lambda_k v_k(t,x)\Bigr|^2,
	\end{equation}
	where $\Delta_K=\{\lambda\in\mathbb{R}^K:\lambda_k\ge0,\;\sum_k\lambda_k=1\}$
	is the probability simplex.  The projected point $\Pi_F(v)$ is unique,
	since the set is convex, but its barycentric representation
	$\lambda^\ast$ need not be: any minimizer of the quadratic program
	yields the same projected point.  Problem \eqref{eq:projection-as-qp} is a
	small convex quadratic program with $K$ variables, one equality
	constraint, and non-negativity bounds.  It can be solved efficiently, even
	inside the training loop, by standard active-set or Frank--Wolfe-type
	methods.
	
	This reformulation shifts the difficulty from ``compute the projection
	onto a general convex set'' to ``produce a vertex set $V(t,x)$ whose
	convex hull approximates $F(t,x)$''.  When $V(t,x)$ is
	redundant, it is advantageous to first reduce it to its extreme points
	using a convex-hull algorithm such as Quickhull \cite{barber1996}.
	Quickhull computes the extreme points $\widetilde{V}(t,x)\subset V(t,x)$
	efficiently in practice (in output-sensitive time in low dimensions
	\cite{barber1996}) and satisfies
	$\operatorname{conv}(\widetilde{V}(t,x))=\operatorname{conv}(V(t,x))=F(t,x)$,
	while typically having far fewer points, which keeps the quadratic
	program \eqref{eq:projection-as-qp} small.
	
	Two regimes are worth distinguishing:
	\begin{enumerate}
		\item $F(t,x)$ is constant, or depending only on $t$.
		The extreme-point set $\widetilde{V}(t)$ can be precomputed
		\emph{offline} on a grid of times.  During training only the small
		quadratic program \eqref{eq:projection-as-qp} is solved online.
		\item $F(t,x)$ depends on the state $x_\theta(t)$.
If $V(t,x)=f(t,x)+A(t,x)V_0$ for a fixed reference set $V_0$
(independent of $(t,x)$), a convex-hull reduction
$\widetilde{V}_0$ of $V_0$ can still be computed once offline.
Indeed, for every fixed $(t,x)$, $A(t,x)$ is simply a linear
map, and
\begin{align*}
\operatorname{conv}\big(A(t,x)\widetilde{V}_0\big)
& =A(t,x)\operatorname{conv}(\widetilde{V}_0)
=A(t,x)\operatorname{conv}(V_0)
\\ &
=\operatorname{conv}\big(A(t,x)V_0\big),
\end{align*}
an identity valid for \emph{every} linear map.
Only the reduced set needs to be transformed --- by the
current $A(t,x)$ and $f(t,x)$ --- at each collocation point
during training.

In Section~\ref{subsec:consistency} below, we impose hypotheses
\ref{H1}--\ref{H3} to prove the consistency of the distance residual.
What is required so that $F$ satisfies these hypotheses is regularity
and growth of $A(\cdot,\cdot)$ and $f(\cdot,\cdot)$. Precisely,
$A(\cdot,x)$ and $f(\cdot,x)$ are measurable in
$t$ for every $x$; $A(t,\cdot)$ and $f(t,\cdot)$ are continuous for
a.e.\ $t$ (which already yields Hausdorff continuity, hence
upper semicontinuity, of $x\mapsto F(t,x)$, since
$d_H(A_1S,A_2S)\le\|A_1-A_2\|\sup_{w\in S}|w|$ for any bounded
$S$); and at most linear growth in $x$,
\[
|f(t,x)|+\|A(t,x)\|\,R_0\le m(t)(1+|x|),
\qquad R_0:=\sup_{w\in V_0}|w|,
\]
for some $m\in L^2(0,T)$, so that $F$ inherits the
linear-growth bound \ref{H3}. Constant $A(t,x)$ (as in
Section~\ref{subsec:linear-control}, where $B$ plays this
role) or $A(t,x)$ affine in $x$ are natural sufficient special
cases; a genuinely nonlinear dependence
$(t,x)\mapsto A(t,x)$ is equally admissible provided this
growth bound holds.
\end{enumerate}
	
	Quickhull is used only as a preprocessing step: it removes redundant
	points before the quadratic program is solved. Combined with
	\eqref{eq:projection-as-qp}, this keeps the evaluation of
	$\dist^2(v,F(t,x))$ practical for convex sets represented by finite point
	clouds.
	
	\begin{remark}[Differentiating the residual when the set depends on the
		state]\label{rem:moving-set-gradient}
		When the admissible set depends on the learned state, that dependence
		also enters the derivative. The two cases used in this paper are as
		follows.
		
		\emph{(i) Translations of a fixed set.} Suppose that
		\[
		F(t,x)=c(t,x)+C(t),
		\]
		where $C(t)\subset\mathbb{R}^d$ is a nonempty closed convex set not
		depending on $x$ and $c(\cdot,\cdot)$ is $C^1$ in $x$. Since the
		distance is translation invariant,
		\[
		r(t,x,v):=\dist^2\bigl(v,c(t,x)+C(t)\bigr)
		=\dist^2\bigl(v-c(t,x),C(t)\bigr),
		\]
		and the set on the right-hand side no longer moves with $x$.
		Writing
		\[
		q:=\bigl(v-c(t,x)\bigr)-\Pi_{C(t)}\bigl(v-c(t,x)\bigr),
		\]
		the Fr\'echet differentiability of $y\mapsto\dist^2(y,C(t))$ (whose
		gradient is $2(y-\Pi_{C(t)}y)$, cf.\ Section~\ref{sec:principle})
		combined with the chain rule gives the \emph{exact} gradients
		\[
		\nabla_v r=2q,
		\qquad
		\nabla_x r=-2\,D_xc(t,x)^{\!\top}q,
		\]
		so $r$ is genuinely $C^1$ in $(x,v)$. In an automatic-differentiation
		implementation, these gradients are obtained by evaluating the
		projection numerically and holding the \emph{projected point}
		constant in the backward pass, while differentiating through the
		argument $v-c(t,x)$. This covers the linear control example of
		Section~\ref{subsec:linear-control}, where
		$F(t,x)=Ax+BU$, $c(t,x)=Ax$, $D_xc=A$, and the ellipsoidal example
		of Section~\ref{subsec:rotating-ellipsoidal}, where $c(t,x)=g(t,x)$.
		
		\emph{(ii) General moving polytopes.} For
		$F(t,x)=\operatorname{conv}\{v_1(t,x),\dots,v_K(t,x)\}$ with
		$v_k(t,\cdot)\in C^1$, the residual is the value function of the
		quadratic program \eqref{eq:projection-as-qp},
		\[
		r(t,x,v)
		=\min_{\lambda\in\Delta_K}
		\Bigl|v-\sum_{k=1}^K\lambda_kv_k(t,x)\Bigr|^2 .
		\]
		If the minimizer $\lambda^\ast=\lambda^\ast(t,x,v)$ is \emph{unique}
		in a neighborhood of the point of interest, then, by Danskin's
		theorem for parametric minimization over a compact set, $r$ is
		differentiable there, with
		\[
		\nabla_vr=2e,
		\qquad
		\nabla_xr=-2\sum_{k=1}^K\lambda_k^\ast\,
		D_xv_k(t,x)^{\!\top}e,
		\qquad
		e:=v-\sum_{k=1}^K\lambda_k^\ast v_k(t,x).
		\]
		If $\lambda^\ast$ is \emph{not} unique (the projected point is
		always unique, but its barycentric representation need not be, cf.\
		the discussion after \eqref{eq:projection-as-qp}), the value function
		is in general only directionally differentiable in $x$, rather than
		$C^1$, so directional derivatives or Clarke subgradients are needed. The plain-gradient statements used here are therefore restricted to fixed sets, sets
		depending only on $t$, translations as in (i), and moving polytopes for
		which the inner quadratic program has a unique, locally stable
		minimizer.
	\end{remark}
	
	\subsection{Consistency}
	\label{subsec:consistency}
	
	We next prove the consistency of the distance residual, as stated in Theorem~\ref{thm:consistency} below. Since solutions of an inclusion need not be unique,
	the result is a compactness/lower-semicontinuity statement rather than a
	Gr\"onwall-type error estimate for a distinguished solution.
	
	We work under the hypotheses of Theorem~\ref{thm:existence}, with two
	mild strengthenings: $m\in L^2(0,T)$, and joint (product)
	measurability of $F$ in place of measurability in $t$ alone
	(see Remark~\ref{rem:H2-product} below):
	
	\begin{enumerate}[label=(H\arabic*)]
		\item \label{H1} $F$ is strict, closed-valued, and convex-valued.
		\item \label{H2} $F$ is \emph{product measurable} (or jointly $\mathscr{L}([0,T])\otimes\mathcal{B}(\mathbb{R}^d)$-measurable), i.e.,
		\[
		\begin{gathered}
		\bigl\{(t,x)\in[0,T]\times\mathbb{R}^d:F(t,x)\cap
		U\neq\varnothing\bigr\}
		\in
		\mathscr{L}([0,T])\otimes\mathcal{B}(\mathbb{R}^d) \\
		\text{for every open }U\subset\mathbb{R}^d,
		\end{gathered}
		\]
		where $\mathcal{L}([0,T])\otimes\mathcal{B}(\mathbb{R}^
d)$ is the product measure space of the Lebesgue $\sigma$-field $\mathscr{L}(
[0,T])$ and the Borel $\sigma$-field $\mathcal{B}(\mathbb{R}^d)$, and $x\mapsto F(t,x)$ is upper
		semicontinuous for \emph{a.e.} $t\in[0,T]$.
		\item \label{H3} There exists $m\in L^2(0,T)$, $m\ge0$, such that
		\[
		\sup_{v\in F(t,x)}|v|\le m(t)(1+|x|) \qquad
		\text{ for a.e. } t\in[0,T] \text{ and every } x\in\mathbb{R}^d.
		\]
	\end{enumerate}
	
	\begin{remark}\label{rem:H2-product}
	
		Fixing $x\in\mathbb{R}^d$ in the product measurability condition in \ref{H2} shows that $t\mapsto F(t,x)$ is Lebesgue measurable for every $x\in\mathbb{R}^d$, hence \ref{H2} implies the
		measurability hypothesis of Theorem~\ref{thm:existence}. Product
		measurability is a mild requirement: it holds, in particular,
		whenever $F$ is jointly upper semicontinuous on
		$[0,T]\times\mathbb{R}^d$ (then $\{F\cap U\neq\varnothing\}$ is an
		$F_\sigma$ set for every open $U$), and whenever $F$ is measurable in
		$t$ and \emph{continuous} in $x$ (Carath\'eodory multifunctions);
		both cases cover all the admissible sets used in
		Section~\ref{sec:numerics}. This strengthening is needed because
		measurability in $t$ together with upper semicontinuity in $x$ does not,
		in general, imply superpositional measurability; see
		\cite{zygmunt_1992}. Product measurability in \ref{H2} supplies the
		measurability used in the proof of Theorem~\ref{thm:consistency}.
	\end{remark}
	
	\begin{definition}\label{def:admissible-class}
		Define the admissible space
		\[
		\mathcal{A}:= \left\{ x \in W^{1,2}(0,T; \mathbb{R}^d) : x(0) = x_0 \right\},
		\]
		where $W^{1,2}(0,T; \mathbb{R}^d)$ is the Sobolev space of absolutely
		continuous functions with weak derivative in $L^2(0,T; \mathbb{R}^d)$.
		For $x\in\mathcal{A}$, define the \emph{distance-residual functional}
		\begin{equation}\label{eq:cont-functional}
			\mathcal{J}(x)
			:=
			\int_0^T\dist^2\bigl(\dot{x}(t),\,F(t,x(t))\bigr)\,dt.
		\end{equation}
	\end{definition}
	
	Every trajectory produced by the hard-constraint ansatz
	$x_\theta(t)=x_0+tN_\theta(t)$, with $N_\theta$ Lipschitz on $[0,T]$
	(as for the tanh networks used below), belongs to $\mathcal{A}$. Finite
	networks represent only a subset of $\mathcal{A}$.
	
	\begin{lemma}\label{lem:normal-integrand}
		Let $F:[0,T]\times\mathbb{R}^d\rightrightarrows\mathbb{R}^d$ be a set-valued function satisfying hypothesis {\rm\ref{H1}} and {\rm\ref{H2}}.
		Consider the function $g:[0,T]\times \mathbb{R}^d \times \mathbb{R}^d \to \mathbb{R}$ defined as
		\[
		g(t, x, v) := \operatorname{dist}^2(v, F(t, x)).
		\]
		Then, $g$ is a normal convex integrand, that is:
		\begin{enumerate}
			\item $g(t, x, \cdot)$ is convex in $v$ for a.e. $t \in [0,T]$ and every $x\in\mathbb{R}^d$.
			\item $g(t, \cdot, \cdot)$ is jointly lower semicontinuous in $(x, v)$ for a.e. $t \in [0,T]$.
			\item $g$ is jointly
			$\mathscr{L}([0,T])\otimes\mathcal{B}(\mathbb{R}^d\times\mathbb{R}^d)$-measurable
			on $[0,T]\times\mathbb{R}^d\times\mathbb{R}^d$.
		\end{enumerate}
	\end{lemma}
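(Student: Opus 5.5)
Each of the three items will be proved separately, relying on the finite-dimensional structure of the fibres $F(t,x)\subset\mathbb{R}^d$.

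\emph{Convexity in $v$.} For a nonempty closed convex set $C\subset\mathbb{R}^d$, the function $v\mapsto\dist(v,C)$ is convex. Indeed, given $v_1,v_2$, set $p_i:=\Pi_C(v_i)$. Then $\mu p_1+(1-\mu)p_2\in C$, and the triangle inequality gives
\[
\dist(\mu v_1+(1-\mu)v_2,C)\le \mu|v_1-p_1|+(1-\mu)|v_2-p_2|.
\]
The map $s\mapsto s^2$ is convex and nondecreasing on $[0,\infty)$, so its composition with the distance, namely $\dist^2(\cdot,C)$, is convex. Applying this with $C=F(t,x)$, which is allowed by \ref{H1}, proves item (a) for every $(t,x)$.

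\emph{Joint lower semicontinuity.} Fix $t$ outside the null set where upper semicontinuity in \ref{H2} fails. Let $(x_n,v_n)\to(x,v)$, and pass to a subsequence realizing $\liminf g(t,x_n,v_n)=:\ell$. We may assume $\ell<\infty$. Since $F(t,x_n)$ is closed in $\mathbb{R}^d$, we can pick $a_n\in F(t,x_n)$ with $|v_n-a_n|^2=g(t,x_n,v_n)$. The sequence $a_n$ is bounded: this follows either from $|a_n|\le |v_n|+\sqrt{g}$, or directly from \ref{H3} if we allow ourselves to use it. Extract $a_n\to a$. Fix $\varepsilon>0$. Upper semicontinuity at $x$ gives $F(t,x_n)\subset F(t,x)+\varepsilon\overline B$ for large $n$, and $F(t,x)+\varepsilon\overline B$ is closed because $F(t,x)$ is closed and $\overline B$ is compact. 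Hence $a\in F(t,x)+\varepsilon\overline B$. Letting $\varepsilon\to0$ and using closedness of $F(t,x)$ yields $a\in F(t,x)$. Therefore $g(t,x,v)\le|v-a|^2=\lim|v_n-a_n|^2=\ell$. This is essentially Lemma~\ref{lem:closed-graph} carried out in finite dimensions.

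\emph{Joint measurability.} This is the main obstacle, because \ref{H2} only asserts product measurability of the set-valued map $F$; we need it for the scalar function $g$ on the triple product. The plan is to invoke the Castaing representation. By \ref{H2} and \ref{H1}, $F$ is a measurable closed-valued multifunction on the measurable space $([0,T]\times\mathbb{R}^d,\ \mathscr{L}([0,T])\otimes\mathcal{B}(\mathbb{R}^d))$. We may first complete this space with respect to the product measure if that is needed, noting that the null set from \ref{H2} is harmless. By \cite[Theorem~8.1.4]{aubin_frankowska_2009}, or the Castaing representation theorem in that source, there are measurable selections $f_k(t,x)\in F(t,x)$ with $F(t,x)=\overline{\{f_k(t,x)\}_k}$. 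It follows that
\[
g(t,x,v)=\inf_k |v-f_k(t,x)|^2.
\]
Each map $(t,x,v)\mapsto|v-f_k(t,x)|^2$ is measurable with respect to $\mathscr{L}([0,T])\otimes\mathcal{B}(\mathbb{R}^d)\otimes\mathcal{B}(\mathbb{R}^d)$, being a continuous function of measurable maps. A countable infimum of such maps is again measurable. Finally, $\mathcal{B}(\mathbb{R}^d)\otimes\mathcal{B}(\mathbb{R}^d)=\mathcal{B}(\mathbb{R}^{2d})$, which gives item (c).

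The main care point is the existence of a Castaing representation in this non-complete product setting. If needed, I would avoid it by proving directly, for each $v$ and each rational $r$, that
\[
\{(t,x): g(t,x,v)<r\}=\{(t,x): F(t,x)\cap B(v,\sqrt r)\neq\varnothing\},
\]
which is measurable by \ref{H2}. This shows $g$ is measurable in $(t,x)$ for each fixed $v$. Since $g$ is also $1$-Lipschitz in $v$ after taking square roots, so that $\sqrt g$ is Carath\'eodory in $(t,x)$ versus $v$, joint measurability follows by the standard Carath\'eodory argument: approximate $v$ by rationals in $\mathbb{Q}^d$.
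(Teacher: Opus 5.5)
Your proof is correct. Parts (a) and (c) essentially follow the paper.

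\begin{itemize}
\item For (a), the paper applies convexity of the squared norm directly. You instead compose the convex function $\dist(\cdot,C)$ with the nondecreasing convex map $s\mapsto s^2$. This is the same idea.
\item For (c), your fallback is what the paper does. It uses $\{(t,x):\dist(v_0,F(t,x))<c\}=\{(t,x):F(t,x)\cap B(v_0,c)\neq\varnothing\}$ for fixed $v_0$, followed by the explicit Carath\'eodory-type formula $\dist(v,F(t,x))=\inf_{q\in\mathbb{Q}^d}\bigl[\dist(q,F(t,x))+|v-q|\bigr]$.
\end{itemize}

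The genuine difference is in (b). The paper never uses nearest points. It first proves lower semicontinuity in $x$ for fixed $v$, by applying upper semicontinuity to the open set $\mathbb{R}^d\setminus\overline{B}(v,d')$. It then passes to joint lower semicontinuity through the $1$-Lipschitz dependence on $v$.

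You instead take nearest points $a_n\in F(t,x_n)$, extract a convergent subsequence by Bolzano--Weierstrass, and use the $\varepsilon$-enlargement form of upper semicontinuity. In effect you run the closed-graph argument of Lemma~\ref{lem:closed-graph} for $F(t,\cdot)$. Your version is equally short and makes the link to closed graphs explicit. It relies on attainment of the distance and on local compactness of $\mathbb{R}^d$, whereas the paper's argument needs neither and works in any metric space. Your boundedness bound $|a_n|\le|v_n|+\sqrt{g}$ suffices; (H3) is not among the lemma's hypotheses, so you should not invoke it.

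Your primary Castaing route for (c) is valid, but do not complete the space. Completion would replace $\mathscr{L}([0,T])\otimes\mathcal{B}(\mathbb{R}^d)$ by the Lebesgue $\sigma$-field of $[0,T]\times\mathbb{R}^d$. The selections, and hence $g$, would then be measurable only for that larger $\sigma$-field in $(t,x)$. That is weaker than (c).

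It is also not enough to make $t\mapsto g(t,x(t),v(t))$ measurable, which is what (c) is used for in Theorem~\ref{thm:consistency}. The reason is that any subset of the Lebesgue-null graph $\{(t,y(t))\}$ of a curve is Lebesgue measurable in $\mathbb{R}^{1+d}$, yet its preimage in $[0,T]$ can be non-measurable.

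Completion is also unnecessary. The Kuratowski--Ryll-Nardzewski selection theorem, and with it the Castaing representation, holds on an arbitrary measurable space for nonempty closed-valued maps into $\mathbb{R}^d$ whose lower inverses of open sets are measurable.

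Finally, the null set in \ref{H2} concerns only upper semicontinuity. The product-measurability requirement has no exceptional set, so that null set plays no role in (c).
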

	\begin{proof}
		For $(a)$,
		fix $x \in \mathbb{R}^d$. For $v_1, v_2 \in \mathbb{R}^d$ and $\lambda \in [0,1]$,
		since $F(t,x)$ is closed and convex, the projections onto $F(t,x)$ exist and are unique:
		\[
		w_1 := \Pi_{F(t,x)} (v_1),  \quad
		w_2 := \Pi_{F(t,x)} (v_2).
		\]
		As $F(t,x)$ is convex, $\overline{w} := \lambda w_1 + (1-\lambda) w_2 \in F(t,x)$. Consequently,
		\begin{align}\label{eq:normal-integrand-inproof1}
			\dist^2(\lambda v_1 + (1-\lambda)v_2, F(t,x))
			& \le \big|\lambda v_1 + (1-\lambda)v_2 - \overline{w}\big|^2
			\nonumber\\ &
			= \left|\lambda(v_1 - w_1) + (1-\lambda)(v_2 - w_2)\right|^2.
		\end{align}
		On the other hand, since the squared Euclidean norm is convex,
		\begin{equation}\label{eq:normal-integrand-inproof2}
			|\lambda(v_1 - w_1) + (1-\lambda)(v_2 - w_2)|^2
			\le \lambda |v_1 - w_1|^2 + (1-\lambda) |v_2 - w_2|^2.
		\end{equation}
		Gathering \eqref{eq:normal-integrand-inproof1} and \eqref{eq:normal-integrand-inproof2},
		\begin{align*}
			\dist^2(\lambda v_1 + & (1-\lambda)v_2, F(t,x))
			\le \lambda |v_1 - w_1|^2 + (1-\lambda)|v_2 - w_2|^2
			\\ &
			= \lambda \dist^2(v_1, F(t,x)) + (1-\lambda) \dist^2(v_2, F(t,x)).
		\end{align*}
		We conclude that $g(t, x, \cdot)$ is convex in $v$.
		
		For $(b)$,
		we start by proving lower semicontinuity in $x$, with $v\in \mathbb{R}^d$ fixed.
		Suppose, for the sake of contradiction, that there is a sequence $\{x_n\}_{n\in\mathbb{N}}$ such that $x_n \to x \in \mathbb{R}^d$ and
		\[
		\liminf_{n \to \infty} \dist(v, F(t, x_n)) = L < d := \dist(v, F(t, x)).
		\]
		Choose $d' \in (L, d)$. Since $\dist(v, F(t,x)) = d > d'$, all points in $F(t,x)$
		lie outside the closed ball $\overline{B}(v, d')$, that is,
		\[
		F(t, x) \subset U := \mathbb{R}^d \setminus \overline{B}(v, d'),
		\]
		where $U$ is open.
		By upper semicontinuity of $F(t, \cdot)$ at $x$ (hypothesis \ref{H2}),
		there exists a neighborhood $V$ of $x$ such that
		\[
		F(t, x') \subset U \quad \text{for all } x' \in V.
		\]
		This means that $|w - v| > d'$ for all $w \in F(t, x')$ and $x'\in V$. Hence,
		\begin{equation}\label{eq:normal-integrand-inproof3}
			\dist(v, F(t, x')) \ge d' \quad \text{ for all } x'\in V.
		\end{equation}
		Since $x_n \to x$ and $V$ is a neighborhood of $x$, there exists $N\geq 1$ such $x_n \in V$ whenever $n > N$. From this and \eqref{eq:normal-integrand-inproof3}, we have
		\[
		\dist(v, F(t, x_n)) \ge d' \quad \text{ for all } n > N.
		\]
		Hence $\liminf_{n\to\infty}\dist(v, F(t, x_n))\ge d' > L$, which contradicts the definition of $L$ and finishes the proof of the lower semicontinuity in $x$ whenever $v\in\mathbb{R}^d$ is fixed.
		
		Next, for a fixed $x\in\mathbb{R}^d$, one can verify that the map $v \mapsto \text{dist}(v, F(t, x))$ is $1$-Lipschitz continuous in $v$, that is,
		\begin{equation}\label{eq:normal-integrand-inproof4}
			|\dist(v, F(t,x)) - \dist(v', F(t,x))| \le |v - v'|
			\quad \text{ for each } v,v'\in\mathbb{R}^d.
		\end{equation}
		Fix $\{x_n\}_{n\in\mathbb{N}}\subset\mathbb{R}^d$
		and $\{v_n\}_{n\in\mathbb{N}}\subset\mathbb{R}^d$
		such that $(x_n, v_n) \rightarrow (x, v)$ as $n \rightarrow \infty.$ From \eqref{eq:normal-integrand-inproof4}, we have
		\begin{equation}
			\text{dist}(v_n, F(t, x_n)) \ge \text{dist}(v, F(t, x_n)) - |v_n - v| \quad \text{ for all } n\in\mathbb{N}.
		\end{equation}
		Taking the $\liminf_{n \rightarrow \infty}$ on both sides, the term $|v_n - v|$ vanishes and the lower semicontinuity in $x$ for the first term on the right-hand side gives
		\begin{equation}
			\liminf_{n \rightarrow \infty} \text{dist}(v_n, F(t, x_n)) \ge \text{dist}(v, F(t, x)).
		\end{equation}
		The proof of $(b)$ is completed by squaring both sides.
		
		For $(c)$, fix first $v_0\in\mathbb{R}^d$ and
		$c>0$. Then
		\begin{align*}
		\bigl\{(t,x):&\dist(v_0,F(t,x))<c\bigr\}
		\\
		& =
		\bigl\{(t,x):F(t,x)\cap B(v_0,c)\neq\varnothing\bigr\}
		\in
		\mathscr{L}([0,T])\otimes\mathcal{B}(\mathbb{R}^d)
		\end{align*}
		by the product measurability required in \ref{H2}, so
		$(t,x)\mapsto\dist(v_0,F(t,x))$ is jointly measurable for each fixed
		$v_0$. Combining this with the $1$-Lipschitz continuity
		\eqref{eq:normal-integrand-inproof4} in $v$ and with the exact
		representation
		\[
		\dist(v,F(t,x))
		=
		\inf_{q\in\mathbb{Q}^d}
		\bigl[\dist(q,F(t,x))+|v-q|\bigr]
		\]
		(the inequality ``$\le$'' follows from the triangle inequality, and
		``$\ge$'' follows by letting $q\to v$), we express
		$(t,x,v)\mapsto\dist(v,F(t,x))$ as a countable infimum of jointly
		measurable functions. Hence $g(t,x,v)=\operatorname{dist}^2(v, F(t, x))$ is jointly measurable, which
		proves $(c)$.
	\end{proof}

	The next lemma settles the measurability of the superposed
	multifunction and of the associated metric projection appearing in the
	proof of Theorem~\ref{thm:consistency}.
	
	\begin{lemma}[Measurability of superpositions and projections]
		\label{lem:superposition-measurable}
		Let $F:[0,T]\times\mathbb{R}^d\rightrightarrows\mathbb{R}^d$ satisfy
		{\rm \ref{H1}--\ref{H2}}. Then:
		\begin{enumerate}
			\item For every Lebesgue measurable
			$y:[0,T]\to\mathbb{R}^d$, the multifunction
			$t\mapsto F(t,y(t))$ is Lebesgue measurable, strict, closed-valued, and convex-valued.
			\item I f, in addition, $v:[0,T]\to\mathbb{R}^d$ is measurable,
			then the metric projection
			$t\mapsto \Pi_{F(t,y(t))}\bigl(v(t)\bigr)$ is measurable.
		\end{enumerate}
	\end{lemma}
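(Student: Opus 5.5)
For part (a), I would compose the measurable map $t\mapsto(t,y(t))$ with the product-measurable multifunction $F$. Fix an open set $U\subset\mathbb{R}^d$. Then
\[
\{t:F(t,y(t))\cap U\neq\varnothing\}=\psi^{-1}(E_U),
\]
where $\psi(t):=(t,y(t))$ and $E_U:=\{(t,x):F(t,x)\cap U\neq\varnothing\}\in\mathscr{L}([0,T])\otimes\mathcal{B}(\mathbb{R}^d)$ by \ref{H2}.

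So it suffices to show that $\psi$ is measurable from $([0,T],\mathscr{L}([0,T]))$ into the product $\sigma$-field. On measurable rectangles $A\times B$ with $A\in\mathscr{L}([0,T])$ and $B$ Borel we have $\psi^{-1}(A\times B)=A\cap y^{-1}(B)$. This set is Lebesgue measurable because $y$ is Lebesgue measurable and preimages of Borel sets are then Lebesgue measurable. Since rectangles generate the product $\sigma$-field and the class of sets with measurable preimage is a $\sigma$-algebra, $\psi$ is measurable. Hence $t\mapsto F(t,y(t))$ is a measurable multifunction.

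Strictness, closedness and convexity of the values are inherited pointwise from \ref{H1}. To be safe about the "a.e. $t$" in \ref{H2}, I would note that the measurability condition itself is stated for all $(t,x)$, so no null set is involved here.

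For part (b), I would first show that $t\mapsto \dist(v(t),F(t,y(t)))$ is measurable. By Lemma~\ref{lem:normal-integrand}(c), $g$ is jointly measurable, and $t\mapsto(t,y(t),v(t))$ is measurable into the product $\sigma$-field by the same rectangle argument as above. So $\delta(t):=\dist(v(t),F(t,y(t)))$ is measurable.

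Next I would characterize the projection as the unique point of the multifunction
\[
P(t):=F(t,y(t))\cap \overline{B}\bigl(v(t),\delta(t)\bigr).
\]
Uniqueness follows from convexity and closedness, via Section~\ref{sec:projection}. The closed-ball multifunction $t\mapsto\overline{B}(v(t),\delta(t))$ is measurable with compact values, because both $v$ and $\delta$ are measurable. I would then invoke the standard fact that the intersection of a measurable closed-valued multifunction with a measurable compact-valued one is measurable (e.g. \cite[Theorem~8.2.4]{aubin_frankowska_2009}).

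Finally, a measurable single-valued multifunction is a measurable function. Alternatively, one can apply the measurable selection theorem \cite[Theorem~8.1.3]{aubin_frankowska_2009} to the nonempty closed-valued measurable map $P$: its selection must equal $\Pi_{F(t,y(t))}(v(t))$ by uniqueness.

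The main obstacle is this intersection step, since intersections of measurable multifunctions need not be measurable without completeness or compactness. Here the Lebesgue $\sigma$-field is complete and the balls are compact, so the cited theorem applies.

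As a fallback, I would use the characterization
\[
\{t:\Pi(t)\in U\}=\bigcup_{k}\bigl\{t: \dist(v(t),F(t,y(t))\cap(\mathbb{R}^d\setminus U_k))>\delta(t)\bigr\}
\]
for suitable closed approximations of $U$. This is messier, so I would prefer the Aubin--Frankowska route.
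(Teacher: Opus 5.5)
Your part (a) is the paper's argument: measurability of $t\mapsto(t,y(t))$ is checked on rectangles, and then the sets $\{F\cap U\neq\varnothing\}$ from \ref{H2} are pulled back.

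Part (b) is correct but follows a different route. The paper cites a ready-made measurable-projection theorem \cite[Theorem~8.2.11]{aubin_frankowska_2009}, applied directly to the closed convex-valued measurable map from (a) and the measurable point $v(t)$. You instead build the result from more elementary pieces:
\begin{enumerate}
\item measurability of $\delta(t)=\dist(v(t),F(t,y(t)))$, obtained by composing the jointly measurable integrand of Lemma~\ref{lem:normal-integrand}(c) with $t\mapsto(t,y(t),v(t))$;
\item the identification $\{\Pi_{F(t,y(t))}(v(t))\}=F(t,y(t))\cap\overline{B}(v(t),\delta(t))$, where convexity enters only through uniqueness of the nearest point;
\item the intersection theorem for closed-valued measurable maps on the complete $\sigma$-finite space $([0,T],\mathscr{L}([0,T]))$;
\item the fact that a singleton-valued measurable multifunction is a measurable function.
\end{enumerate}
Your version shows exactly where convexity and completeness of the Lebesgue $\sigma$-field are used, and it reuses a lemma the paper has already proved. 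The cost is a few more steps, whereas the paper's (b) is a one-line citation.

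The one step you state without proof is measurability of $t\mapsto\overline{B}(v(t),\delta(t))$. The Castaing representation $\{v(t)+\delta(t)q:\ q\in\mathbb{Q}^d,\ |q|\le1\}$, which is dense in the ball and consists of measurable maps, settles it in one line.

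Your fallback only intersects $F(t,y(t))$ with open sets. It can be made rigorous, for example with $U_k=\{x:\dist(x,\mathbb{R}^d\setminus U)\ge 1/k\}$, and it would even bypass the intersection theorem. It is not needed, however.
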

	
	\begin{proof}
		(a) The map $t\mapsto(t,y(t))$ is measurable from
		$([0,T],\mathscr{L}([0,T]))$ into
		$([0,T]\times\mathbb{R}^d,
		\mathscr{L}([0,T])\otimes\mathcal{B}(\mathbb{R}^d))$, since the
		preimage of every product set $A\times B$ is
		$A\cap y^{-1}(B)\in\mathscr{L}([0,T])$ and such products generate
		the product $\sigma$-field. Hence, for every open
		$U\subset\mathbb{R}^d$,
		\[
		\bigl\{t\in[0,T]:F(t,y(t))\cap U\neq\varnothing\bigr\}
		=
		\bigl\{t:(t,y(t))\in
		\{F\cap U\neq\varnothing\}\bigr\}
		\in\mathscr{L}([0,T])
		\]
		by the product measurability in \ref{H2}. The values are
		nonempty, closed, and convex by \ref{H1}.
		
		(b) Since $t\mapsto F(t,y(t))$ is measurable with closed convex
		values and $v(\cdot)$ is measurable, the metric projection is
		measurable by the measurable-projection theorem
		\cite[Theorem~8.2.11]{aubin_frankowska_2009}.
	\end{proof}
	
	Both consistency proofs use the same lower-semicontinuity principle:
	strong convergence in the state variable together with weak convergence
	in the variable in which the integrand is convex.
	
	\begin{lemma}[Strong--weak lower semicontinuity]
		\label{lem:strong-weak-lsc}
		Let $Z\subset\mathbb{R}^N$ be bounded and measurable, and let
		$g:Z\times\mathbb{R}^m\times\mathbb{R}^k\to[0,\infty]$ be a normal
		integrand, i.e., jointly
		$\mathscr{L}(Z)\otimes
		\mathcal{B}(\mathbb{R}^m\times\mathbb{R}^k)$-measurable
		and such that $g(z,\cdot,\cdot)$ is lower semicontinuous on
		$\mathbb{R}^m\times\mathbb{R}^k$ for a.e.\ $z\in Z$. Assume in
		addition that $g(z,x,\cdot)$ is convex for a.e.\ $z\in Z$ and every
		$x\in\mathbb{R}^m$ and that
		\[
		\int_Z g\bigl(z,x_0(z),v_0(z)\bigr)\,dz < \infty
		\]
		for some $x_0\in L^2(Z;\mathbb{R}^m)$, $v_0\in L^2(Z;\mathbb{R}^k)$. If
		\[
		x_n\to x \ \text{ strongly in } L^2(Z;\mathbb{R}^m),
		\qquad
		v_n\rightharpoonup v \ \text{ weakly in } L^2(Z;\mathbb{R}^k),
		\]
		then
		\[
		\int_Z g\bigl(z,x(z),v(z)\bigr)\,dz
		\le
		\liminf_{n\to\infty}\int_Z g\bigl(z,x_n(z),v_n(z)\bigr)\,dz.
		\]
	\end{lemma}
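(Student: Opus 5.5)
The plan is the classical Ioffe-type argument: first reduce to a liminf realized along a subsequence with a.e.\ convergence of the states, then exploit convexity in $v$ through Mazur's lemma, and finally use lower semicontinuity in $(x,v)$ pointwise together with Fatou's lemma. Let $\ell:=\liminf_n\int_Z g(z,x_n,v_n)\,dz$. If $\ell=\infty$ there is nothing to prove, so assume $\ell<\infty$. Pass to a subsequence (not relabeled) along which the integrals converge to $\ell$. Since $x_n\to x$ in $L^2$, a further subsequence gives $x_n(z)\to x(z)$ for a.e.\ $z$; weak convergence of $v_n$ is preserved. Joint measurability of $g$ ensures that $z\mapsto g(z,x_n(z),v_n(z))$ is measurable, so all integrals make sense in $[0,\infty]$. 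The integrability hypothesis at $(x_0,v_0)$ only guarantees that the functional is not identically $+\infty$; since $g\ge0$, no lower bound is needed.

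The difficulty is that $g$ is convex in $v$ but the state also moves. Applying Mazur directly to $v_n$ would mix different states $x_k$, and convexity in $v$ at a fixed $x$ does not control this. I would handle it by Egorov's theorem and a cutoff. Fix $\varepsilon>0$ and choose a measurable $E\subset Z$ with $|Z\setminus E|<\varepsilon$ on which $x_n\to x$ uniformly. On $E$, introduce the truncated integrand
\[
g_j(z,x',v):=\inf\{\,g(z,y,v):|y-x'|\le 1/j\,\}.
\]
This function is convex in $v$ only up to the infimum over $y$, so it is awkward. A cleaner route is the following. For each $z$, define $h_k(z,v):=\inf_{n\ge k}g(z,x_n(z),v)$. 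This is not convex either, so I would instead work with the pointwise lower semicontinuous envelope and the Mazur combinations directly.

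Concretely, for each $k$, Mazur's lemma applied to the tail $\{v_n\}_{n\ge k}$ gives convex combinations $\hat v_k=\sum_{n=k}^{N(k)}\lambda^{(k)}_n v_n\to v$ strongly, and along a subsequence a.e. Convexity gives
\[
g(z,x(z),\hat v_k(z))\le\sum_n\lambda_n^{(k)} g(z,x(z),v_n(z)).
\]
The error from replacing $x(z)$ by $x_n(z)$ is the crux. I would control it using lower semicontinuity in $x$ via the Yosida/Moreau approximation of $g$ in $(x,v)$, or, more simply, via the standard result of Ioffe (or Balder), which states that a normal integrand convex in $v$ yields a strong--weak sequentially lower semicontinuous integral functional when $g\ge0$. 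Since the paper allows citing, I would present the self-contained reduction as far as possible and invoke Ioffe's theorem (e.g.\ Ioffe 1977, or Fonseca--Leoni, \emph{Modern Methods in the Calculus of Variations}, Thm.~7.5) for the step that decouples $x_n$ from $x$.

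The main obstacle is exactly this decoupling. If I wanted a self-contained proof, I would attempt the following. Use the lower semicontinuity in $(x,v)$ to build, for a.e.\ $z$, affine minorants in $v$: convexity plus lower semicontinuity gives the representation $g(z,x,\cdot)=\sup$ of affine functions $a+\langle b,v\rangle$. One then shows that, near $x(z)$, these minorants persist for $x_n(z)$ up to $\varepsilon$, using lower semicontinuity of $(x,v)\mapsto g$, uniformly on compact $v$-sets. This requires a measurable selection of countably many minorants and a localization. Integrating $a_i(z)+\langle b_i(z),v_n\rangle$ over measurable pieces and passing to the weak limit, then taking the supremum via a partition argument and monotone convergence, yields $\int_E g(z,x,v)\le\ell+C\varepsilon$. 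Finally, letting $\varepsilon\to0$ and applying monotone convergence on $E\uparrow Z$ gives the claim.
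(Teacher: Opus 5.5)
Your proposal rests on the same argument as the paper. The paper's proof is just a citation of Ioffe's 1977 theorem, which applies here because $g\ge0$, strong $L^2$ convergence implies convergence in measure, and weak $L^2$ convergence on the bounded set $Z$ implies weak $L^1$ convergence. Two remarks: once you invoke Ioffe, your subsequence, Egorov and Mazur reductions are unnecessary, since the theorem gives the full inequality directly; and your optional self-contained sketch would need more than lower semicontinuity uniform on compact $v$-sets, because the weakly convergent values $v_n(z)$ need not stay in a compact set.
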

	
	\begin{proof}
The result follows directly from \cite[Theorem~3]{ioffe_1977} since $g$ is nonnegative.
	\end{proof}
	
	\begin{theorem}[Consistency of the distance residual] \label{thm:consistency}
		Let $F:[0,T]\times\mathbb{R}^d\rightrightarrows\mathbb{R}^d$ be a set-valued function satisfying hypotheses {\rm \ref{H1}--\ref{H3}}.
		Consider the differential inclusion \eqref{eq:inclusion}, along with its
		solution set $\mathcal{S}(x_0)$ defined as in \eqref{eq:solution-DI}.
		Then:
		\begin{enumerate}
			\item[\rm(a)] (Exactness) For every $x \in \mathcal{A}$, $\mathcal{J}(x) = 0$ if and only if $x \in \mathcal{S}(x_0)$.
			\item[\rm(b)] (Asymptotic consistency) If $\{x_n\}_{n\in\mathbb{N}} \subset \mathcal{A}$ satisfies $\mathcal{J}(x_n) \to 0$, then there exists a subsequence of $\{x_n\}_{n\in\mathbb{N}}$, for which we use the same labeling, and $x^* \in \mathcal{A}$ such that
			\[ x_n \to x^* \text{ uniformly on }[0,T],
			\qquad
			\dot{x}_n \rightharpoonup \dot{x}^* \text{ weakly in } L^2(0,T; \mathbb{R}^d),
			\]
		 and
		 \[
		 x^* \in \mathcal{S}(x_0).
		 \]
		\end{enumerate}
	\end{theorem}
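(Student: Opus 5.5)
For part (a), I will argue directly from the definitions. By Lemma~\ref{lem:normal-integrand}(c) and Lemma~\ref{lem:superposition-measurable}, the map $t\mapsto\dist^2(\dot x(t),F(t,x(t)))$ is measurable and nonnegative. Hence $\mathcal{J}(x)=0$ if and only if $\dist(\dot x(t),F(t,x(t)))=0$ for a.e.\ $t$. Since $F(t,x(t))$ is closed by \ref{H1}, this holds if and only if $\dot x(t)\in F(t,x(t))$ a.e. Every element of $\mathcal{A}$ is absolutely continuous with $x(0)=x_0$, so this is exactly membership in $\mathcal{S}(x_0)$.

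For part (b), the first step is an a priori bound in $W^{1,2}$. For each $n$, I take the measurable projection $w_n(t):=\Pi_{F(t,x_n(t))}(\dot x_n(t))$ from Lemma~\ref{lem:superposition-measurable}(b). By \ref{H3}, $|w_n(t)|\le m(t)(1+|x_n(t)|)$, and therefore
\[
|\dot x_n(t)|\le \dist(\dot x_n(t),F(t,x_n(t)))+m(t)\bigl(1+|x_n(t)|\bigr).
\]
Writing $|x_n(t)|\le|x_0|+\int_0^t|\dot x_n|$, I will apply Gr\"onwall's inequality with the kernel $m\in L^2\subset L^1$ and the forcing term $\rho_n:=\dist(\dot x_n,F(\cdot,x_n))$. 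Since $\|\rho_n\|_{L^1}\le\sqrt T\,\mathcal{J}(x_n)^{1/2}$, which is bounded, this gives $\sup_n\|x_n\|_{C([0,T])}<\infty$. Feeding this back into the displayed inequality bounds $\|\dot x_n\|_{L^2}$ by $\mathcal{J}(x_n)^{1/2}+\|m\|_{L^2}(1+\sup_n\|x_n\|_\infty)$. It is here that $m\in L^2$, rather than merely $L^1$, is needed.

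Next comes compactness. The bound $|x_n(t)-x_n(s)|\le|t-s|^{1/2}\|\dot x_n\|_{L^2}$ makes the sequence equicontinuous, and it is uniformly bounded. Arzel\`a--Ascoli then yields a uniformly convergent subsequence, and by reflexivity of $L^2$ a further subsequence has $\dot x_n\rightharpoonup v$ weakly in $L^2$. Passing to the limit in $x_n(t)=x_0+\int_0^t\dot x_n$ (testing against $\mathbf 1_{[0,t]}$) gives $x^*(t)=x_0+\int_0^t v$. Hence $x^*\in\mathcal{A}$ and $\dot x^*=v$.

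The final step, which I regard as the crux, is passing to the limit in the nonlinear set-valued constraint under only weak convergence of the derivatives. I will apply Lemma~\ref{lem:strong-weak-lsc} with $Z=[0,T]$ and $g(t,x,v)=\dist^2(v,F(t,x))$. This $g$ is a normal convex integrand by Lemma~\ref{lem:normal-integrand}. Uniform convergence gives $x_n\to x^*$ strongly in $L^2$, and $\dot x_n\rightharpoonup\dot x^*$ weakly. The finiteness hypothesis of Lemma~\ref{lem:strong-weak-lsc} is supplied by any $x_n$, since $\mathcal{J}(x_n)<\infty$ for $n$ large. The lemma then gives $\mathcal{J}(x^*)\le\liminf\mathcal{J}(x_n)=0$, and part (a) yields $x^*\in\mathcal{S}(x_0)$. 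The delicate points are the measurability of the selections $w_n$, needed for the Gr\"onwall step, and checking that the hypotheses of the Ioffe-type lemma hold; both are covered by Lemmas~\ref{lem:normal-integrand} and~\ref{lem:superposition-measurable}.
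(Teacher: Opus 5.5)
Your proposal is correct and follows essentially the same route as the paper. Part (a) comes from the exactness of the distance to a closed set. Part (b) combines the projection-based pointwise bound with Gr\"onwall's inequality for a uniform $W^{1,2}$ estimate, then Arzel\`a--Ascoli together with weak compactness of $\dot x_n$, identification of $\dot x^*$ through $x_n(t)=x_0+\int_0^t\dot x_n$, and finally Lemma~\ref{lem:strong-weak-lsc} applied to the normal convex integrand of Lemma~\ref{lem:normal-integrand}. Your explicit check of the finiteness hypothesis of Lemma~\ref{lem:strong-weak-lsc}, using $(x_n,\dot x_n)$ themselves, is a detail the paper leaves implicit.
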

	
	\begin{proof}
		Part $(a)$ follows immediately from the exactness property of the distance. To prove $(b)$ we start by setting $r_n(t) := \operatorname{dist}(\dot{x}_n(t), F(t, x_n(t)))$ for every $n\in\mathbb{N}$ so that
		\[
		\int_0^T r_n^2(t) \, dt = \mathcal{J}(x_n) =: \varepsilon_n \to 0.
		\]
		Let $\varepsilon_0 := \sup_n \varepsilon_n < \infty$. Since  $F(t, x_n(t))$ is nonempty, closed,  and convex, the projection
		\[
		w_n(t) = \Pi_{F(t, x_n(t))}(\dot{x}_n(t)) \in F(t, x_n(t))
		\]
		is uniquely defined and the minimum is attained, that is,
		\[
		|\dot{x}_n(t) - w_n(t)| =\min_{w\in F(t, x_n(t))} |\dot{x}_n(t) - w| =r_n(t).
		\]
		Moreover, $t\mapsto w_n(t)$ is (Lebesgue) measurable by
		Lemma~\ref{lem:superposition-measurable}, applied with $y=x_n$ and
		$v=\dot{x}_n$.
		By the triangle inequality and \ref{H3}, for a.e. $t\in[0,T]$,
		\begin{equation}\label{eq:consistency-inproof1}
			|\dot{x}_n(t)| \le |\dot{x}_n(t) - w_n(t)|+|w_n(t)|= r_n(t) + m(t)(1 + |x_n(t)|).
		\end{equation}
		On the other hand, applying Cauchy–Schwarz inequality to $r_n(t)$, we obtain
		\[
		\int_0^t r_n(s) \, ds  \le \Big(\int_0^T ds\Big)^{1/2}\Big(\int_0^T r^2_n(s)ds\Big)^{1/2}
		\leq  \sqrt{T \varepsilon_n}\le  \sqrt{T \varepsilon_0} =: R_0.
		\]
		Then, integrating \eqref{eq:consistency-inproof1} we deduce that
		\begin{align*}
			|x_n(t)| & \le |x_0|  + |x_n(t)-x_0|
			\leq |x_0| + \int_0^t |\dot{x}_n(s)|\, ds
			\\ &
			\le |x_0| + R_0 + \int_0^t m(s)(1 + |x_n(s)|) \, ds.
		\end{align*}
		From this and Gr\"{o}nwall's inequality (see, e.g., \cite[pp.\,12--14]{pachpatte1998}) we obtain
		\begin{equation}\label{eq:consistency-inproof2}
			1 + |x_n(t)| \le (1 + |x_0| + R_0 )e^{\|m\|_{L^1(0,T)}}:=C_0
		\end{equation}
		for every $t \in [0,T]$ and $n\in\mathbb{N}$.
		Substituting \eqref{eq:consistency-inproof2} back into \eqref{eq:consistency-inproof1}, squaring, and integrating,
		\[
		\int_0^T |\dot{x}_n(t)|^2 \, dt \le \int_0^T r_n^2(t) \, dt + 2C_0 \int_0^T r_n(t) m(t) \, dt
		+ C_0^2 \int_0^T m^2(t) \, dt.
		\]
		The second term in the right-hand side can be handled by
		applying Cauchy--Schwarz to obtain
		\[
		\int_0^T r_n(t) m(t) \, dt \le \Big(\int_0^T r_n^2(t) \, dt\Big)^{1/2} \cdot \Big(\int_0^T m^2(t) \, dt \Big)^{1/2}
		= \sqrt{\varepsilon_n} \, \|m\|_{L^2(0,T)}.
		\]
		Therefore,
		\begin{align*}
			\int_0^T |\dot{x}_n(t)|^2 \, dt
			&\le \varepsilon_n + 2C_0\sqrt{\varepsilon_n} \, \|m\|_{L^2(0,T)} + C_0^2 \|m\|_{L^2(0,T)}^2.
		\end{align*}
		Since $\varepsilon_0 := \sup_n \varepsilon_n < \infty$, for every $n\in\mathbb{N}$,
		\begin{equation}\label{eq:consistency-inproof3}
			\|\dot{x}_n\|^2_{L^2(0,T)} \le \varepsilon_0 + 2 C_0 \sqrt{\varepsilon_0} \|m\|_{L^2(0,T)} +  C^2 _0\|m\|^2_{L^2(0,T)} =: C_1.
		\end{equation}
		This shows that
		the sequence $\{\dot{x}_n\}_{n\in\mathbb{N}}$ is bounded in $L^2(0,T; \mathbb{R}^d)$.
		In particular, from \eqref{eq:consistency-inproof2} and \eqref{eq:consistency-inproof3} we deduce that
		the sequence $\{x_n\}_{n\in\mathbb{N}}$ is bounded in the Sobolev space $W^{1,2}(0,T; \mathbb{R}^d)$.
		
		Since, by \eqref{eq:consistency-inproof3}, the sequence $\{\dot{x}_n\}_{n\in\mathbb{N}}$ is bounded in the Hilbert space $L^2(0,T; \mathbb{R}^d)$, which is reflexive, \cite[Theorem 3.18]{brezis_2011} provides a subsequence  of $\{\dot{x}_n\}_{n\in\mathbb{N}}$, for which we use the same labeling, and $v^*\in L^2(0,T;\mathbb{R}^d)$ such that
		\[
		\dot{x}_n \rightharpoonup v^* \quad \text{weakly in } L^2(0,T; \mathbb{R}^d).
		\]
		By Cauchy--Schwarz inequality,
		\[
		|x_n(t) - x_n(s)| = \left|\int_s^t \dot{x}_n(\tau) \, d\tau\right|
		\le \|\dot{x}_n\|_{L^2(s,t)} \sqrt{|t - s|}
		\le \|\dot{x}_n\|_{L^2(0,T)} \sqrt{|t - s|}.
		\]
		From this and \eqref{eq:consistency-inproof3}, we have
		\[
		|x_n(t) - x_n(s)| \le \sqrt{C_1|t - s|},
		\]
		hence $x_n$ is uniformly equicontinuous.
		Since $x_n$ is also uniformly bounded as seen from \eqref{eq:consistency-inproof2},
		we may therefore apply the Arzelà–Ascoli theorem, which guarantees the existence of a subsequence, still denoted by $\{x_n\}_{n\in\mathbb{N}}$, such that
		\[
		x_n \to x^* \quad \text{uniformly on } [0,T].
		\]
		Then, passing to the limit in $x_n(t) = x_0 + \int_0^t \dot{x}_n(s) \, ds$ (the integral converges by the weak convergence $\dot{x}_n \rightharpoonup v^*$ in $L^2$, tested against $\mathbf{1}_{[0,t]}$) gives
		\[
		x^*(t) = x_0 + \int_0^t v^*(s) \, ds
		\]
		hence $x^* \in \mathcal{A}$ and $\dot{x}^* = v^*$ a.e.
		
		Finally, set $g(t, x, v) := \operatorname{dist}^2(v, F(t,x))$.
		By Lemma~\ref{lem:normal-integrand}, $g$ is a normal convex
		integrand. Since $x_n\to x^*$ uniformly on $[0,T]$, in particular
		$x_n\to x^*$ strongly in $L^2(0,T;\mathbb{R}^d)$, while
		$\dot{x}_n\rightharpoonup\dot{x}^*$ weakly in
		$L^2(0,T;\mathbb{R}^d)$. Lemma~\ref{lem:strong-weak-lsc}, applied
		with $Z=(0,T)$ and $m=k=d$, yields
		\[
		\mathcal{J}(x^*) \le \liminf_{n \to \infty} \mathcal{J}(x_n) = 0.
		\]
		As $\mathcal{J}$ is non-negative, we conclude that $\mathcal{J}(x^*) = 0$. By part (a), this means that $x^* \in \mathcal{S}(x_0)$.
	\end{proof}
	
	\begin{remark}
		Because solutions need not be unique, Theorem~\ref{thm:consistency}
		identifies a subsequential limit in $\mathcal{S}(x_0)$ but does not
		select a prescribed element of that set. Additional structure, such as
		monotonicity, a one-sided Lipschitz condition, or uniqueness, may yield
		full-sequence convergence, stronger convergence, or quantitative error
		estimates.
	\end{remark}

	\section{DR-PINNs for Partial Differential Inclusions}
	\label{sec:pde}
	
	Consider now the parabolic partial differential
	inclusion (\ref{eq:parabolic-inclusion-background}).
	
	\subsection{The Distance-Based PDE Loss}
	\label{sec:pde-loss}
	
	The classical PDE residual $\partial_t u_\theta-\Delta u_\theta-f(u_\theta)$
	has no analogue when $\Phi$ is set-valued.  The natural replacement is
	\begin{equation}\label{eq:pde-distance-loss}
		\mathcal{L}_{\mathrm{incl}}(\theta)
		=
		\frac{1}{N_r}
		\sum_{i=1}^{N_r}
		\dist^2\Bigl(
		\partial_t u_\theta(t_i,x_i)-\Delta u_\theta(t_i,x_i),\;
		\Phi(u_\theta(t_i,x_i))
		\Bigr),
	\end{equation}
	where $\{(t_i,x_i)\}_{i=1}^{N_r}$ are collocation points in $Q$.  As in
	the case of differential inclusions, $\mathcal{L}_{\mathrm{incl}}(\theta)=0$ if and only if the
	inclusion is satisfied at every collocation point.  The full DR-PINN loss
	incorporates initial and boundary data:
	\begin{align*}
		\mathcal{L}(\theta)
		&=
		\mathcal{L}_{\mathrm{incl}}(\theta)
		+
		\lambda_0\,\mathcal{L}_0(\theta)
		+
		\lambda_b\,\mathcal{L}_b(\theta),
	\end{align*}
	where
	\[
	\mathcal{L}_0(\theta)
	=
	\frac{1}{N_0}\sum_{j=1}^{N_0}\bigl|u_\theta(0,x_j^0)-u_0(x_j^0)\bigr|^2,
	\qquad
	\mathcal{L}_b(\theta)
	=
	\frac{1}{N_b}\sum_{k=1}^{N_b}\bigl|u_\theta(t_k^b,x_k^b)\bigr|^2.
	\]
	
	\subsection{Consistency}
	\label{sec:pde-consistency}
	For the consistency analysis, we assume that the set-valued reaction
	term $\Phi:\mathbb{R}\rightrightarrows\mathbb{R}$ satisfies the
	standing hypotheses \ref{P1}--\ref{P3} of Section~\ref{sec:basicsPDI};
	these are exactly the assumptions under which
	Theorem~\ref{thm:existence-pde} guarantees
	$\mathcal{S}(u_0)\neq\varnothing$, so that existence and consistency
	rely on the same set of hypotheses.


	\begin{definition}\label{def:admissible-pde}
		Let
		\[
		\mathcal{A}
		:=
		\bigl\{u\in\mathcal{W}(0,T):
		u(0,\cdot)=u_0\ \text{in }\Omega,\;
		u(t,\cdot)|_{\partial\Omega}=0\ \forall t\in[0,T]
		\bigr\}.
		\]
		For $u\in\mathcal{A}$, define the \emph{continuous distance-residual
			functional}
		\[
		\mathcal{J}(u)
		:=
		\iint_{Q}\dist^2\bigl(\partial_t u-\Delta u,\,\Phi(u)\bigr)\,dx\,dt.
		\]
	\end{definition}
	The pointwise-in-time boundary condition in the definition of
	$\mathcal{A}$ is in fact automatically satisfied: by the continuous
	embedding \eqref{eq:W-cont-embedding}, every $u\in\mathcal{W}(0,T)$
	admits a representative in $C([0,T];H^1_0(\Omega))$, so
	$u(t,\cdot)\in H_0^1(\Omega)$ --- and hence
	$u(t,\cdot)|_{\partial\Omega}=0$ in the trace sense --- for
	\emph{every} $t\in[0,T]$, even though $u(t,\cdot)\in H^2(\Omega)$
	only for a.e.\ $t$. Consequently,
	\[
	\mathcal{A}=\{u\in\mathcal{W}(0,T):u(0,\cdot)=u_0\}.
	\]
	The boundary condition is retained in the definition only for emphasis.
	
	We also note that $\mathcal{J}(u)<\infty$ for every
	$u\in\mathcal{A}$: by \ref{P3}, choosing for a.e.\ $(t,x)$ any
	$r_0(t,x)\in\Phi(u(t,x))$ gives
	\[
	\dist\bigl(\partial_t u-\Delta u,\,\Phi(u)\bigr)
	\le|\partial_t u-\Delta u-r_0|
	\le|\partial_t u-\Delta u|+m_0(1+|u|)
	\quad\text{a.e.\ on } Q;
	\]
	squaring and integrating, and using $\partial_t u-\Delta u\in L^2(Q)$
	and $u\in L^2(Q)$ (both immediate from $u\in\mathcal{W}(0,T)$),
	yields $\mathcal{J}(u)<\infty$.
\begin{lemma}[Normal convex integrand]\label{lem:normal-integrand-parabolic}
	Let $\Phi:\mathbb{R}\rightrightarrows\mathbb{R}$ satisfy hypotheses \ref{P1} and \ref{P2}.
	Define $g:\mathbb{R}\times\mathbb{R}\to[0,\infty)$ by
	\[
	g(s,z):=\operatorname{dist}^2\bigl(z,\Phi(s)\bigr).
	\]
	Then $g$ is a normal convex integrand, that is:
	\begin{enumerate}
		\item[(a)] $g(s,\cdot)$ is convex for every $s\in\mathbb{R}$;
		\item[(b)] $g$ is jointly lower semicontinuous on $\mathbb{R}\times\mathbb{R}$;
		\item[(c)] $g$ is Borel measurable; consequently, for every pair of measurable
		functions $u,v:Q\to\mathbb{R}$, the composition $(t,x)\mapsto g(u(t,x),v(t,x))$
		is Lebesgue measurable on $Q$.
	\end{enumerate}
\end{lemma}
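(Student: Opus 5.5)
The plan is to mirror the proof of Lemma~\ref{lem:normal-integrand}, specialised to the autonomous scalar case where $\Phi$ does not depend on $(t,x)$. This removes the product-measurability subtleties, so part (c) will follow from (b).

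For (a), fix $s\in\mathbb{R}$. By \ref{P1}, $\Phi(s)$ is a nonempty closed convex subset of $\mathbb{R}$, i.e.\ a closed interval (possibly unbounded or degenerate). For $z_1,z_2\in\mathbb{R}$ and $\lambda\in[0,1]$ I would take the projections $w_i:=\Pi_{\Phi(s)}(z_i)$. Then $\lambda w_1+(1-\lambda)w_2\in\Phi(s)$, and the estimate \eqref{eq:normal-integrand-inproof1}--\eqref{eq:normal-integrand-inproof2}, which uses only convexity of $|\cdot|^2$, gives convexity of $z\mapsto\dist^2(z,\Phi(s))$ verbatim. In fact this holds for every $s$, not merely almost every one, since there is no time variable.

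For (b), I would first show that $s\mapsto\dist(z,\Phi(s))$ is lower semicontinuous for each fixed $z$, using \ref{P2} exactly as before. Suppose $s_n\to s$ with $\liminf_n\dist(z,\Phi(s_n))<d':=$ a number strictly less than $\dist(z,\Phi(s))$. Then $\Phi(s)$ is contained in the open set $U=\mathbb{R}\setminus[z-d',z+d']$. Upper semicontinuity gives a neighbourhood $V$ of $s$ with $\Phi(s')\subset U$ for all $s'\in V$. Hence $\dist(z,\Phi(s_n))\ge d'$ eventually, which is a contradiction. Next I would combine this with the $1$-Lipschitz bound $\dist(z_n,\Phi(s_n))\ge\dist(z,\Phi(s_n))-|z_n-z|$ and take $\liminf$, which gives joint lower semicontinuity of $\dist$. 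Squaring preserves it, because $t\mapsto t^2$ is continuous and nondecreasing on $[0,\infty)$. Finiteness of $g$ is immediate from strictness of $\Phi$.

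For (c), a jointly lower semicontinuous function on $\mathbb{R}^2$ has closed sublevel sets $\{g\le c\}$, so $g$ is Borel measurable. If $u,v:Q\to\mathbb{R}$ are Lebesgue measurable, then $(u,v):Q\to\mathbb{R}^2$ is Lebesgue measurable, because preimages of rectangles are intersections of measurable sets and rectangles generate $\mathcal{B}(\mathbb{R}^2)$. The composition of a Borel function with a Lebesgue measurable map is Lebesgue measurable, since Borel preimages pull back to $\mathscr{L}(Q)$.

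The only step needing care is (b), specifically making sure that upper semicontinuity in the sense of the paper's definition yields the neighbourhood $V$. It applies directly because $U$ is open and contains $\Phi(s)$, so I expect no real obstacle.
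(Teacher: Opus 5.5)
Your proposal is correct and follows the paper's proof essentially step for step. Convexity in (a) comes from the convex combination of the two projections. In (b), lower semicontinuity in $s$ is proved by contradiction using upper semicontinuity and the open set $\mathbb{R}\setminus[z-d',z+d']$, followed by the $1$-Lipschitz bound in $z$ and squaring. In (c), Borel measurability comes from closed sublevel sets, and Lebesgue measurability follows by composing with the map $(u,v)$; your phrasing of the choice of $d'$ is garbled but evidently means $\liminf_n\dist(z,\Phi(s_n))<d'<\dist(z,\Phi(s))$, with the case $\dist(z,\Phi(s))=0$ trivial.
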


\begin{proof}
	We treat the three assertions in order.
	
	\emph{Part (a).} Fix $s\in\mathbb{R}$. Since $\Phi(s)$ is nonempty, closed, and convex
	by \ref{P1}, the projection $\Pi_{\Phi(s)}$ is well defined and single-valued (see Section~\ref{sec:projection}). For
	$z_1,z_2\in\mathbb{R}$ and $\lambda\in[0,1]$, set
	$w_1:=\Pi_{\Phi(s)}(z_1)$, $w_2:=\Pi_{\Phi(s)}(z_2)$. By convexity of
	$\Phi(s)$, $w:=\lambda w_1+(1-\lambda)w_2\in\Phi(s)$, hence
	\begin{align*}
	\operatorname{dist}^2\bigl(\lambda z_1+(1-\lambda)z_2,\Phi(s)\bigr)
	& \le |\lambda z_1+(1-\lambda)z_2-w|^2
	\\
	& = |\lambda(z_1-w_1)+(1-\lambda)(z_2-w_2)|^2.
	\end{align*}
	By convexity of $t\mapsto t^2$,
	\[
	|\lambda(z_1-w_1)+(1-\lambda)(z_2-w_2)|^2
	\le \lambda(z_1-w_1)^2+(1-\lambda)(z_2-w_2)^2,
	\]
	and combining the two inequalities gives
	\[
	g(s,\lambda z_1+(1-\lambda)z_2)\le \lambda g(s,z_1)+(1-\lambda)g(s,z_2).
	\]
	
	\emph{Part (b).} We first show that $s\mapsto\operatorname{dist}(z,\Phi(s))$
	is lower semicontinuous for fixed $z\in\mathbb{R}$. Suppose, for contradiction,
	that $s_n\to s$ and
	\[
	\liminf_{n\to\infty}\operatorname{dist}(z,\Phi(s_n)) = L < d:=\operatorname{dist}(z,\Phi(s)).
	\]
	Choose $d'\in(L,d)$. Since $d>d'$, we have $\Phi(s)\subset U:=\mathbb{R}\setminus[z-d',z+d']$,
	an open set. By the upper semicontinuity of $\Phi$ at $s$ (hypothesis \ref{P2}),
	there exists a neighborhood $V$ of $s$ such that $\Phi(s')\subset U$ for all
	$s'\in V$. Hence $|w-z|>d'$ for every $w\in\Phi(s')$, $s'\in V$, so that
	\[
	\operatorname{dist}(z,\Phi(s'))\ge d' \qquad \text{for all } s'\in V.
	\]
	Since $s_n\to s$, there exists $N\in\mathbb{N}$ such that $s_n\in V$ for all
	$n>N$, whence $\operatorname{dist}(z,\Phi(s_n))\ge d'$ for $n>N$. This contradicts
	$\liminf_n \operatorname{dist}(z,\Phi(s_n)) = L < d'$, proving lower semicontinuity
	in $s$.
	
	On the other hand, for fixed $s\in\mathbb{R}$, the map
	$z\mapsto\operatorname{dist}(z,\Phi(s))$ is $1$-Lipschitz:
	\[
	\bigl|\operatorname{dist}(z,\Phi(s))-\operatorname{dist}(z',\Phi(s))\bigr|
	\le |z-z'| \qquad \text{for all } z,z'\in\mathbb{R}.
	\]
	Let $(s_n,z_n)\to(s,z)$. Then
	\[
	\operatorname{dist}(z_n,\Phi(s_n)) \ge \operatorname{dist}(z,\Phi(s_n)) - |z_n-z|.
	\]
	Taking $\liminf_{n\to\infty}$ on both sides, the term $|z_n-z|$ vanishes and
	the lower semicontinuity in $s$ established above gives
	\[
	\liminf_{n\to\infty}\operatorname{dist}(z_n,\Phi(s_n)) \ge \operatorname{dist}(z,\Phi(s)).
	\]
	Squaring both (nonnegative) sides yields $\liminf_n g(s_n,z_n)\ge g(s,z)$,
	which proves (b).
	
	\emph{Part (c).} A jointly lower semicontinuous function on a metric
	space has closed sublevel sets $\{(s,z):g(s,z)\le c\}$ for every
	$c\in\mathbb{R}$, hence $g$ is Borel measurable on $\mathbb{R}\times\mathbb{R}$.
	If $u,v:Q\to\mathbb{R}$ are Lebesgue measurable, then
	$(t,x)\mapsto(u(t,x),v(t,x))$ is measurable from $Q$ into $\mathbb{R}^2$, and
	the composition of a Borel measurable function with a Lebesgue measurable
	function is Lebesgue measurable. Therefore $(t,x)\mapsto g(u(t,x),v(t,x))$
	is measurable on $Q$.
\end{proof}
	
	\begin{theorem}[Consistency: parabolic case]
		\label{thm:consistency-parabolic}
		Assume that $\Phi:\mathbb{R}\rightrightarrows\mathbb{R}$
		satisfies {\rm \ref{P1}--\ref{P3}}. Then:
		
		\begin{itemize}
			\item[(a)] \emph{Exactness.} For every $u\in\mathcal{A}$,
			\[
			\mathcal{J}(u)=0
			\;\Longleftrightarrow\;
			\partial_tu(t,x)-\Delta u(t,x)
			\in\Phi(u(t,x))
			\quad\text{for a.e. }(t,x)\in Q.
			\]
			
			\item[(b)] \emph{Asymptotic consistency.} Let
			$\{u_n\}_{n\in\mathbb{N}}\subset\mathcal{A}$ satisfy
			\[
			\mathcal{J}(u_n)\longrightarrow 0.
			\]
			Then there exists a subsequence, for which we use the same labeling, and
			$u^\ast\in\mathcal{A}$ such that
			\[
			u_n\rightharpoonup u^\ast
			\text{ weakly in }\mathcal{W}(0,T),
			\quad
			u_n\longrightarrow u^\ast
			\text{ strongly in }
			C\bigl([0,T];L^2(\Omega)\bigr),
			\]
			and
			\[
			u^\ast\in\mathcal{S}(u_0).
			\]
		\end{itemize}
	\end{theorem}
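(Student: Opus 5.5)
Part (a) is immediate: the integrand $\dist^2(\partial_t u-\Delta u,\Phi(u))$ is nonnegative and measurable by Lemma~\ref{lem:normal-integrand-parabolic}(c). Hence $\mathcal{J}(u)=0$ iff it vanishes a.e. on $Q$. Since $\Phi(u(t,x))$ is closed by \ref{P1}, this holds iff $\partial_t u-\Delta u\in\Phi(u)$ a.e.

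For (b) I would mimic the ODE proof of Theorem~\ref{thm:consistency}, replacing Gr\"onwall by the weighted energy estimate and maximal regularity. Set $f_n:=\partial_t u_n-\Delta u_n\in L^2(Q)$ and $\varepsilon_n:=\mathcal{J}(u_n)\to0$. Let $w_n:=\Pi_{\Phi(u_n)}(f_n)$ pointwise; it is measurable by the measurable-projection argument of Lemma~\ref{lem:superposition-measurable} applied to $(t,x)\mapsto\Phi(u_n(t,x))$, which is measurable as in Proposition~\ref{prop:bk-selection}. Writing $e_n:=f_n-w_n$, we have $\|e_n\|_{L^2(Q)}^2=\varepsilon_n$ and $w_n\in S^2_{\Phi,u_n}$. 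Thus
\[
u_n=S_0(w_n+e_n)+w_0 .
\]
Arguing as in Lemma~\ref{lem:bk-bielecki}, with $\lambda_0=2\sqrt2\,m_0+1$, the estimate gives $\|u_n\|_\ast\le\|w_0\|_\ast+\tfrac{\sqrt2 m_0\sqrt{|Q|}}{\lambda_0}+\tfrac12\|u_n\|_\ast+\tfrac1{\lambda_0}\sqrt{\varepsilon_n}$. We may absorb $\tfrac12\|u_n\|_\ast$ into the left side because $\|u_n\|_\ast<\infty$. This yields a uniform bound on $\|u_n\|_{L^2(Q)}$ via \eqref{eq:bk-norm-equiv}. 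Then \ref{P3} bounds $\|w_n\|_{L^2(Q)}$ uniformly, so $\|f_n\|_{L^2(Q)}$ is bounded. The maximal regularity estimate \eqref{eq:max-reg-linear} then gives $\sup_n\|u_n\|_{\mathcal{W}(0,T)}<\infty$.

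Next comes compactness. $\mathcal{W}(0,T)$ is a Hilbert space, so a subsequence converges weakly to some $u^\ast\in\mathcal{W}(0,T)$. By Lemma~\ref{lem:compact-embedding}, a further subsequence converges strongly in $C([0,T];L^2(\Omega))$, and the limit coincides with $u^\ast$ by uniqueness of weak limits in $L^2(Q)$. Evaluation at $t=0$ is continuous on $C([0,T];L^2(\Omega))$, so $u^\ast(0)=u_0$ and $u^\ast\in\mathcal{A}$. The map $u\mapsto\partial_t u-\Delta u$ is bounded and linear from $\mathcal{W}(0,T)$ to $L^2(Q)$, hence weakly continuous. Therefore $f_n\rightharpoonup f^\ast:=\partial_t u^\ast-\Delta u^\ast$ weakly in $L^2(Q)$, while $u_n\to u^\ast$ strongly in $L^2(Q)$.

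Finally, I would pass to the limit in the inclusion. The cleanest route avoids Ioffe: $e_n\to0$ strongly, so $w_n=f_n-e_n\rightharpoonup f^\ast$ weakly. We have $w_n\in S^2_{\Phi,u_n}$ and $u_n\to u^\ast$ strongly, so Lemma~\ref{lem:bk-weak-closedness} gives $f^\ast\in S^2_{\Phi,u^\ast}$, i.e.\ $u^\ast\in\mathcal{S}(u_0)$. Alternatively, Lemma~\ref{lem:strong-weak-lsc} with the integrand of Lemma~\ref{lem:normal-integrand-parabolic} gives $\mathcal{J}(u^\ast)\le\liminf\mathcal{J}(u_n)=0$, and part (a) concludes; its finiteness hypothesis holds since $\mathcal{J}(u^\ast)<\infty$.

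The main obstacle is the a priori bound. Linear growth with a possibly large $m_0$ prevents a naive $L^2$ estimate from closing, which is why I use the Bielecki weight together with the residual $e_n$ as an extra source term. I must also check that the weighted energy identity is legitimate for $\mathcal{W}(0,T)$ functions, which holds since $\tilde u\in H^1(0,T;L^2)\cap L^2(0,T;H^2\cap H^1_0)$.
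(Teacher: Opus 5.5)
Your proof is correct, and it departs from the paper's in two places.

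\textbf{A priori bound.} The paper tests $\partial_t u_n-\Delta u_n=w_n+r_n$ with $u_n$ and runs an unweighted Gr\"onwall argument. You instead reuse the Bielecki estimate of Lemma~\ref{lem:bk-bielecki}, with the residual $e_n$ as an extra source term. The two are equivalent, since the exponential weight is a packaged Gr\"onwall argument. Your remark that a naive estimate would not close is only true for a global-in-time $L^2(Q)$ bound of the kind needed for the self-map in Lemma~\ref{lem:bk-self-map}. A pointwise-in-time Gr\"onwall estimate, as in the paper, closes for any $m_0$.

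\textbf{Passage to the limit.} This is the more substantive difference. The paper applies Ioffe's strong--weak lower semicontinuity theorem (Lemma~\ref{lem:strong-weak-lsc}) to the normal convex integrand of Lemma~\ref{lem:normal-integrand-parabolic}. That gives $\mathcal{J}(u^\ast)\le\liminf\mathcal{J}(u_n)=0$, and the paper then invokes part~(a). The alternative you sketch at the end is exactly this argument. Your main route is different:
\begin{itemize}
\item since $e_n\to0$ strongly, $w_n=f_n-e_n\rightharpoonup f^\ast$ weakly;
\item you feed the selections $w_n\in S^2_{\Phi,u_n}$ into the Mazur-based closedness Lemma~\ref{lem:bk-weak-closedness};
\item this yields $f^\ast\in S^2_{\Phi,u^\ast}$ directly.
\end{itemize}

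\textbf{What each route buys.} Your route is self-contained within the paper: it needs no external semicontinuity theorem and no normal-integrand lemma for part~(b). It also makes visible that existence (Theorem~\ref{thm:existence-pde}) and consistency rest on the same weak--strong closure mechanism. The paper's route proves more: weak lower semicontinuity of $\mathcal{J}$ along any sequence with $u_n\to u$ strongly and $\partial_tu_n-\Delta u_n\rightharpoonup\partial_tu-\Delta u$ weakly, not only along sequences with vanishing residual. It also keeps the parabolic proof structurally parallel to the ODE proof of Theorem~\ref{thm:consistency}, where the same Lemma~\ref{lem:strong-weak-lsc} is used.
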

	
	\begin{proof}
		Part~\textup{(a)} follows directly from the fact that
		$\Phi(u(t,x))$ is closed for almost every $(t,x)\in Q$.
		
		We now prove part~\textup{(b)}. Set
		\[
		z_n:=\partial_tu_n-\Delta u_n
		\]
		and
		\[
		\rho_n(t,x)
		:=
		\operatorname{dist}
		\bigl(z_n(t,x),\Phi(u_n(t,x))\bigr).
		\]
		Then
		\begin{equation}
		\|\rho_n\|_{L^2(Q)}^2
		=
		\mathcal{J}(u_n)
		\longrightarrow 0.
		\label{eq:rho-convergence}
		\end{equation}
		
		By \ref{P1}, the set $\Phi(s)$ is nonempty, closed, and convex
		for every $s\in\mathbb{R}$. Hence the metric projection onto
		$\Phi(s)$ is well defined and unique. Define
		\[
		w_n(t,x)
		:=
		\Pi_{\Phi(u_n(t,x))}
		\bigl(z_n(t,x)\bigr).
		\]
		The map $w_n$ is (Lebesgue) measurable on $Q$. Indeed, by
		\ref{P2}, for every closed $C\subset\mathbb{R}$ the set
		$\{s\in\mathbb{R}:\Phi(s)\subset\mathbb{R}\setminus C\}$ is open,
		so $\{s:\Phi(s)\cap C\neq\varnothing\}$ is closed; writing an open
		set $U\subset\mathbb{R}$ as a countable union of closed sets shows
		that $\{s:\Phi(s)\cap U\neq\varnothing\}$ is Borel for every open
		$U$, i.e., $\Phi$ is a Borel measurable multifunction with closed
		convex values. Consequently, $(t,x)\mapsto\Phi(u_n(t,x))$ is
		Lebesgue measurable on $Q$, as the composition of a Borel measurable
		multifunction with the measurable function $u_n$, and the metric
		projection of the measurable function $z_n$ onto it is measurable by
		the measurable-projection theorem for measurable, closed- and
		convex-valued multifunctions; see
		\cite[Theorem~8.2.11]{aubin_frankowska_2009}. It satisfies
		\[
		w_n(t,x)\in\Phi(u_n(t,x))
		\quad\text{for a.e. }(t,x)\in Q,
		\]
		together with
		\[
		|z_n(t,x)-w_n(t,x)|
		=
		\rho_n(t,x).
		\]
		Setting
		\[
		r_n:=z_n-w_n,
		\]
		we obtain
		\begin{equation}
		\partial_tu_n-\Delta u_n=w_n+r_n
		\quad\text{on }Q,
		\label{eq:pde-decomposition}
		\end{equation}
		and, by \eqref{eq:rho-convergence},
		\begin{equation}
		\|r_n\|_{L^2(Q)}^2
		=
		\mathcal{J}(u_n)
		\longrightarrow 0.
		\label{eq:rn-convergence}
		\end{equation}
		
		We first derive an $L^2$ energy estimate. Testing
		\eqref{eq:pde-decomposition} with $u_n(t)$ and using the homogeneous Dirichlet
		boundary condition, we obtain, for almost every $t\in(0,T)$,
		\begin{equation}
		\frac{1}{2}\frac{d}{dt}
		\|u_n(t)\|_{L^2(\Omega)}^2
		+
		\|\nabla u_n(t)\|_{L^2(\Omega)}^2
		=
		\int_\Omega
		\bigl(w_n(t,x)+r_n(t,x)\bigr)u_n(t,x)\,dx.
		\label{eq:energy-identity}
		\end{equation}
		By \ref{P3},
		\begin{equation}
		|w_n(t,x)|
		\leq m_0\bigl(1+|u_n(t,x)|\bigr)
		\quad\text{for a.e. }(t,x)\in Q.
		\label{eq:wn-growth}
		\end{equation}
		Consequently,
		\begin{equation}
		\|w_n(t)\|_{L^2(\Omega)}
		\leq
		m_0\Bigl(
		|\Omega|^{1/2}
		+
		\|u_n(t)\|_{L^2(\Omega)}
		\Bigr).
		\label{eq:wn-L2}
		\end{equation}
		Using the Cauchy--Schwarz and Young inequalities in
		\eqref{eq:energy-identity}, together with \eqref{eq:wn-L2}, we find a constant
		$C>0$, independent of $n$, such that
		\begin{equation}
		\frac{d}{dt}
		\|u_n(t)\|_{L^2(\Omega)}^2
		+
		2\|\nabla u_n(t)\|_{L^2(\Omega)}^2
		\leq
		C\Bigl(
		1+\|u_n(t)\|_{L^2(\Omega)}^2
		\Bigr)
		+
		\|r_n(t)\|_{L^2(\Omega)}^2.
		\label{eq:gronwall-ineq}
		\end{equation}
		Since $u_n(0)=u_0$ for every $n$, Gronwall's inequality and
		\eqref{eq:rn-convergence} yield
		\begin{equation}
		\sup_{n\in\mathbb{N}}
		\|u_n\|_{L^\infty(0,T;L^2(\Omega))}
		<\infty.
		\label{eq:Linf-bound}
		\end{equation}
		Integrating \eqref{eq:gronwall-ineq} over $(0,T)$ also gives
		\[
		\sup_{n\in\mathbb{N}}
		\|u_n\|_{L^2(0,T;H_0^1(\Omega))}
		<\infty.
		\]
		
		From \eqref{eq:wn-growth} and \eqref{eq:Linf-bound}, it follows that
		\[
		\sup_{n\in\mathbb{N}}
		\|w_n\|_{L^2(Q)}
		<\infty.		
		\]
		Together with \eqref{eq:rn-convergence}, this implies
		\begin{equation}
		\sup_{n\in\mathbb{N}}
		\|w_n+r_n\|_{L^2(Q)}
		<\infty.
		\label{eq:forcing-bound}
		\end{equation}
		
		We now apply the standard $L^2$ maximal-regularity estimate for the
		Di\-richlet heat equation to \eqref{eq:pde-decomposition}; see
		\cite[Section~7.1.3, Theorem~5]{Evans} for smooth domains, with the
		underlying $H^2$ elliptic estimate supplied by
		\cite[Theorem~3.2.1.2]{Grisvard1985} on convex domains and by
		\cite[Theorem~2.4.2.5]{Grisvard1985} on $C^{1,1}$ domains. Since
		$u_0\in H_0^1(\Omega)$ and $\partial\Omega$ is $C^{1,1}$ or $\Omega$ is convex (which covers the square domain used in Section~\ref{subsec:parabolic-PDI}), there exists a constant
		$C_{\mathrm{par}}>0$, independent of $n$, such that
		\[
		\|u_n\|_{\mathcal{W}(0,T)}
		\leq
		C_{\mathrm{par}}
		\left(
		\|w_n+r_n\|_{L^2(Q)}
		+
		\|u_0\|_{H_0^1(\Omega)}
		\right).
		\]
		By \eqref{eq:forcing-bound},
		\[
		\sup_{n\in\mathbb{N}}
		\|u_n\|_{\mathcal{W}(0,T)}
		<\infty.
		\]
		
		Since $\mathcal{W}(0,T)$ is reflexive, there exist a subsequence,
		not relabeled, and $u^\ast\in\mathcal{W}(0,T)$ such that
		\begin{equation}
		u_n\rightharpoonup u^\ast
		\quad\text{weakly in }\mathcal{W}(0,T).
		\label{eq:weak-convergence-W}
		\end{equation}
		By the compact parabolic embedding	$\mathcal{W}(0,T)
		\hookrightarrow\hookrightarrow
		C\bigl([0,T];L^2(\Omega)\bigr)$ of
		Lemma~\ref{lem:compact-embedding},
		\begin{equation}
		u_n\longrightarrow u^\ast
		\quad\text{strongly in }
		C\bigl([0,T];L^2(\Omega)\bigr).
		\label{eq:strong-convergence-C}
		\end{equation}
		In particular,
		\begin{equation}
		u_n\longrightarrow u^\ast
		\quad\text{strongly in }L^2(Q).
		\label{eq:strong-convergence-L2}
		\end{equation}
		The convergence in \eqref{eq:strong-convergence-C} and the identity
		$u_n(0)=u_0$ imply
		\[
		u^\ast(0)=u_0.
		\]
		Moreover, since
		\[
		u_n\in L^2\bigl(0,T;H^2(\Omega)\cap H_0^1(\Omega)\bigr)
		\]
		and the latter is a closed linear subspace, the weak limit satisfies
		the homogeneous Dirichlet boundary condition. Hence
		$u^\ast\in\mathcal{A}$.
		
		Define
		\[
		g:\mathbb{R}\times\mathbb{R}\longrightarrow[0,\infty),
		\qquad
		g(s,z):=\operatorname{dist}^2(z,\Phi(s)).
		\]
		The linear operator
		\[
		\mathcal{D}:
		\mathcal{W}(0,T)\longrightarrow L^2(Q),
		\qquad
		\mathcal{D}u:=\partial_tu-\Delta u,
		\]
		is continuous. Therefore, \eqref{eq:weak-convergence-W} implies
		\begin{equation}
		\partial_tu_n-\Delta u_n
		\rightharpoonup
		\partial_tu^\ast-\Delta u^\ast
		\quad\text{weakly in }L^2(Q).
		\label{eq:weak-convergence-operator}
		\end{equation}
		By Lemma~\ref{lem:normal-integrand-parabolic}, $g$ is Borel
		measurable, jointly lower semicontinuous, and convex in its second
		argument. Hence $\widetilde g(z,s,\zeta):=g(s,\zeta)$, regarded as an
		integrand on $Q\times\mathbb{R}\times\mathbb{R}$ independent of
		$z=(t,x)$, satisfies the hypotheses of
		Lemma~\ref{lem:strong-weak-lsc} with $Z=Q$ and $m=k=1$. Applying
		that lemma with \eqref{eq:strong-convergence-L2}
		($u_n\to u^\ast$ strongly in $L^2(Q)$) and
		\eqref{eq:weak-convergence-operator}
		($\partial_t u_n-\Delta u_n\rightharpoonup
		\partial_t u^\ast-\Delta u^\ast$ weakly in $L^2(Q)$), we obtain
		\[
		\begin{aligned}
			\mathcal{J}(u^\ast)
			&=
			\int_Q
			g\bigl(
			u^\ast,
			\partial_tu^\ast-\Delta u^\ast
			\bigr)\,dx\,dt
			\\
			&\leq
			\liminf_{n\to\infty}
			\int_Q
			g\bigl(
			u_n,
			\partial_tu_n-\Delta u_n
			\bigr)\,dx\,dt
			\\
			&=
			\liminf_{n\to\infty}
			\mathcal{J}(u_n)
			=
			0.
		\end{aligned}
		\]
		Since $\mathcal{J}$ is nonnegative, $\mathcal{J}(u^\ast)=0$.
		
		Part~\textup{(a)} then gives
		\[
		\partial_tu^\ast-\Delta u^\ast
		\in\Phi(u^\ast)
		\quad\text{for a.e. }(t,x)\in Q.
		\]
		Thus $u^\ast$ is a strong solution of
		\eqref{eq:parabolic-inclusion-background}, and consequently $u^\ast\in\mathcal{S}(u_0)$.	
	\end{proof}

\section{Numerical Simulations}
\label{sec:numerics}
We consider three numerical examples: two differential inclusions and one
parabolic partial differential inclusion.
\subsection{Linear control system with polytopic input set}
\label{subsec:linear-control}

\paragraph{Problem formulation}
We consider the two-dimensional linear differential inclusion
\begin{equation}
  \label{eq:linear-control}
  \dot{x}(t) \in Ax(t) + BU,
  \qquad x(0) = x_0,
  \qquad t \in [0, T],
\end{equation}
where the admissible-velocity set is
\begin{equation}
  \label{eq:Fx-linear}
  F(t,x) = \mathrm{conv}(V(x)),
  \qquad
  V(x) = \{Ax + Bu_k\}_{k=1}^{K}.
\end{equation}
This is an instance of regime~(b) in Section~\ref{sec:projection-extreme-points}:
the vertex set $V(x)$ depends on the current state through an affine image of
the fixed polytope $U = \mathrm{conv}\{u_1,\dots,u_K\}$.

\paragraph{System data}
The state dimension is $d = 2$ and the control dimension is $m = 2$.
The drift matrix and the control-influence matrix are
\begin{equation}
  \label{eq:AB-matrices}
  A =
  \begin{pmatrix}
    -0.5 &  1.0 \\
    -1.0 & -0.5
  \end{pmatrix},
  \qquad
  B = I_2,
\end{equation}
where $I_2$ denotes the $2\times 2$ identity matrix.
The eigenvalues of $A$ are $-0.5 \pm i$, so the uncontrolled system is a
stable spiral.
The initial condition is $x_0 = (1, 0)^\top$ and the time horizon is $T = 1.5$.

\paragraph{Control set and Quickhull preprocessing}
The control set is the $\ell^1$ diamond in $\mathbb{R}^2$,
\begin{equation}
  \label{eq:U-diamond}
  U = \mathrm{conv}\{e_1, -e_1, e_2, -e_2\},
\end{equation}
where $e_1, e_2$ are the standard basis vectors.
To illustrate the computational benefit of the Quickhull reduction, we
augment the four true extreme vertices with eight additional points drawn
uniformly from $[-0.35, 0.35]^2$, which lie in the interior of
$\mathrm{conv}(U)$.
The resulting raw vertex set has $K_{\mathrm{raw}} = 12$ points.
Applying the Quickhull algorithm (Section~\ref{sec:projection-extreme-points},
regime~(b)) to $B U_{\mathrm{raw}}$ recovers the four true extreme points,
reducing the QP~\eqref{eq:projection-as-qp} from $K_{\mathrm{raw}} = 12$ to
$\widetilde{K} = 4$ variables (see Figure \ref{fig:control-set}).
\begin{figure}[htbp]
	\centering
	\includegraphics[width=0.65\textwidth]{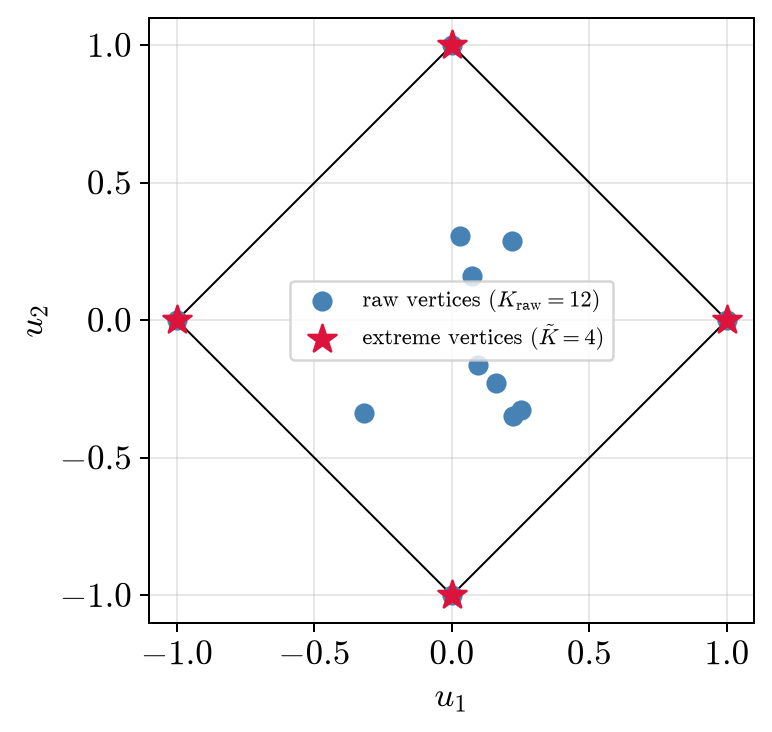}
	\caption{Raw vertex set $U_{\mathrm{raw}}$ ($K_{\mathrm{raw}}=12$, blue circles: the
		four true extreme vertices of the $\ell^1$ diamond plus eight redundant
		interior points) and the four extreme vertices $\widetilde{K}=4$ recovered
		by Quickhull (red stars), together with the convex hull of $U_{\mathrm{raw}}$.}
	\label{fig:control-set}
\end{figure}

Since $B = I_2$, the offline-reduced set is
$\widetilde{U} = \{e_1, -e_1, e_2, -e_2\}$,
and the vertex set at each collocation point is built online as
\begin{equation}
  \label{eq:Vtilde-online}
  \widetilde{V}(x_\theta(t_i))
  = \bigl\{Ax_\theta(t_i) + \tilde{u}_k\bigr\}_{k=1}^{\widetilde{K}}.
\end{equation}
The projection $\Pi_{F_i}\!\bigl(\dot{x}_\theta(t_i)\bigr)$
is then computed by solving the QP~\eqref{eq:projection-as-qp}
with $\widetilde{K} = 4$ variables using the SLSQP method.

\paragraph{Neural network and training}
The trial solution takes the form
\begin{equation}
  \label{eq:trial-linear}
  x_\theta(t) = x_0 + t\,N_\theta(t),
\end{equation}
where $N_\theta : \mathbb{R} \to \mathbb{R}^2$ is a fully connected neural
network with $\tanh$ activation, three hidden layers of width 64, and
Glorot-normal initialization.
The parametrization~\eqref{eq:trial-linear} enforces $x_\theta(0) = x_0$
exactly for every $\theta$, so the DR-PINN loss reduces to the pure inclusion
term
\begin{equation}
  \label{eq:loss-linear}
  \mathcal{L}(\theta)
  = \frac{1}{N} \sum_{i=1}^{N}
    \dist^2\!\bigl(\dot{x}_\theta(t_i),\, F(x_\theta(t_i))\bigr),
\end{equation}
evaluated at $N = 300$ collocation points distributed uniformly on $[0, T]$.
Since $F(x_\theta(t_i))=Ax_\theta(t_i)+B\widetilde{U}$ is a translation of
the \emph{fixed} polytope $B\widetilde{U}$, the residual is evaluated in the
equivalent translated form
$\dist^2\!\bigl(\dot x_\theta(t_i)-Ax_\theta(t_i),\,B\widetilde{U}\bigr)$ of
Remark~\ref{rem:moving-set-gradient}(i): the projection is computed
numerically by the QP~\eqref{eq:projection-as-qp} and the projected point is
held constant in the backward pass, so automatic differentiation returns the
exact gradients $2q$ and $-2A^{\top}q$ with respect to
$\dot x_\theta$ and $x_\theta$, respectively (the same practice as in the
DR-PINN companion run of Section~\ref{subsec:rotating-ellipsoidal}).
Training is performed with the Adam optimizer at learning rate $10^{-3}$ for
$1\,000$ epochs. Figure~\ref{fig:loss-linear} shows the inclusion loss $\mathcal{L}(\theta)$
of~\eqref{eq:loss-linear} along training. Starting from
$\mathcal{L}(\theta) = \LCLossInit$ at initialization, the loss
decreases to $\LCLossFinal$ after $1\,000$ epochs, confirming that the
learned velocity $\dot x_\theta(t)$ becomes admissible, i.e.\
$\dot x_\theta(t) \in F(x_\theta(t))$, at (numerically) every collocation
point. Collocation-point accuracy does not control the residual between
collocation points, so we also report an independent dense-grid validation
below.

\begin{figure}[htbp]
	\centering
	\includegraphics[width=\textwidth]{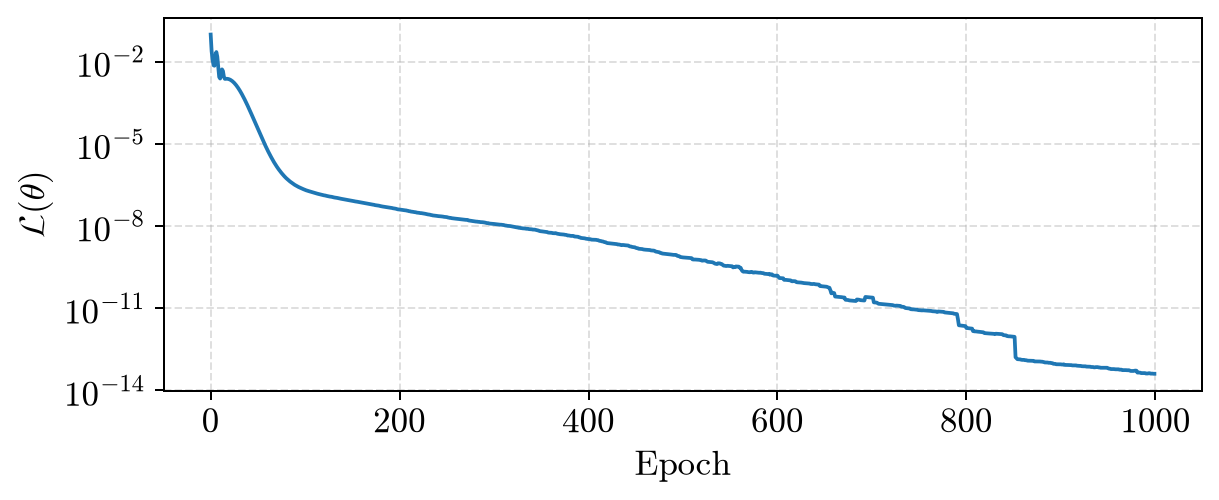}
	\caption{Training history of the inclusion loss $\mathcal{L}(\theta)$ for
		the single DR-PINN of this section (semi-logarithmic scale).}
	\label{fig:loss-linear}
\end{figure}

\paragraph{Reference reachable tube}
A reference inner approximation of the reachable tube
\begin{equation}
  \label{eq:reachable-tube}
  \mathcal{R}(t)
  =
  \bigl\{
    x(t)
    :
    \dot{x}(s) = Ax(s) + Bu(s),\;
    u(s) \in U \text{ a.e.},\;
    x(0) = x_0
  \bigr\}
\end{equation}
is constructed by integrating the ODE $\dot{x} = Ax + Bu$ with two families
of admissible controls:
\begin{enumerate}
	\item[(a)] \emph{Constant bang-bang controls}: one trajectory per extreme
	vertex $u_k \in \widetilde{U}$, totalling four reference trajectories.
	By Pontryagin's maximum principle, since the Hamiltonian of a linear
	system is affine in the control, boundary trajectories of
	$\mathcal{R}(t)$ are generated by bang-bang controls taking values in
	the extreme points $\widetilde{U}$, possibly \emph{switching} among
	them in time (see, e.g., \cite{LeeMarkus1967}, Chapter 4, or
	\cite{aubin_cellina_1984}, Chapter 8). The four \emph{constant} vertex
	controls therefore provide representative extremal trajectories; they do
	not, in general, trace the entire reachable-set boundary.
  \item[(b)] \emph{Piecewise-constant random controls}: 300 trajectories obtained
        by randomly switching among the vertices of $U$ at uniformly
        distributed times. Each constant-control segment
        $[t_j,t_{j+1}]$ is integrated over its \emph{exact} interval with a
        fifth-order Runge--Kutta scheme (relative tolerance $10^{-9}$,
        absolute tolerance $10^{-11}$) and dense output; the state handed to
        the next segment is the state at exactly the switching time
        $t_{j+1}$, and the trajectory is then evaluated at the global grid
        times falling inside the segment. Since the system is linear with
        piecewise-constant control, the exact solution is available in
        closed form via the segment-wise matrix-exponential propagation
        $x(t_{j+1}) = e^{A\Delta t_j}x(t_j) + A^{-1}(e^{A\Delta t_j}-I)Bu_j$;
        the replication script cross-checks every integrated trajectory
        against this closed form and aborts if the deviation exceeds
        $10^{-7}$ (observed maximum over all trajectories and grid times:
        $\LCIntSelfCheck$).
        This family provides an empirical sample of the interior of
        $\mathcal{R}(t)$ (it does not densely cover it in any
        mathematical sense).
\end{enumerate}
The union of all 304 trajectories evaluated on a uniform time grid of 200
points constitutes the \emph{sampled reference cloud} used for comparison:
at each grid time $t$,
\[
  \widehat{\mathcal{R}}_{\rm ref}(t)
  :=
  \bigl\{x^{(j)}(t):\ j=1,\dots,304\bigr\}
  \subset
  \mathcal{R}(t).
\]
We use separate notation: $\mathcal{R}(t)$ denotes the
exact reachable set of \eqref{eq:reachable-tube}, while
$\widehat{\mathcal{R}}_{\rm ref}(t)$ is its finite empirical sample.
Figure~\ref{fig:phase-linear} overlays the trained trajectory $x_\theta(t)$
with the sampled reference cloud $\widehat{\mathcal{R}}_{\rm ref}(t)$
approximating the reachable tube $\mathcal{R}(t)$
of~\eqref{eq:reachable-tube}: the left panel shows the phase portrait, the
right panel the two state components against time. The learned trajectory
remains within the visual envelope of the sampled reference trajectories
throughout $[0,T]$, in line with the consistency mechanism of
Theorem~\ref{thm:consistency} (which concerns the continuous residual
functional; cf.\ Remark~\ref{rem:collocation-gap}): a small inclusion
loss --- corroborated here by the independent dense-grid residual
validation reported below --- produces a trajectory close to empirically
generated admissible ones.
\begin{figure}[htbp]
	\centering
	\includegraphics[width=\textwidth]{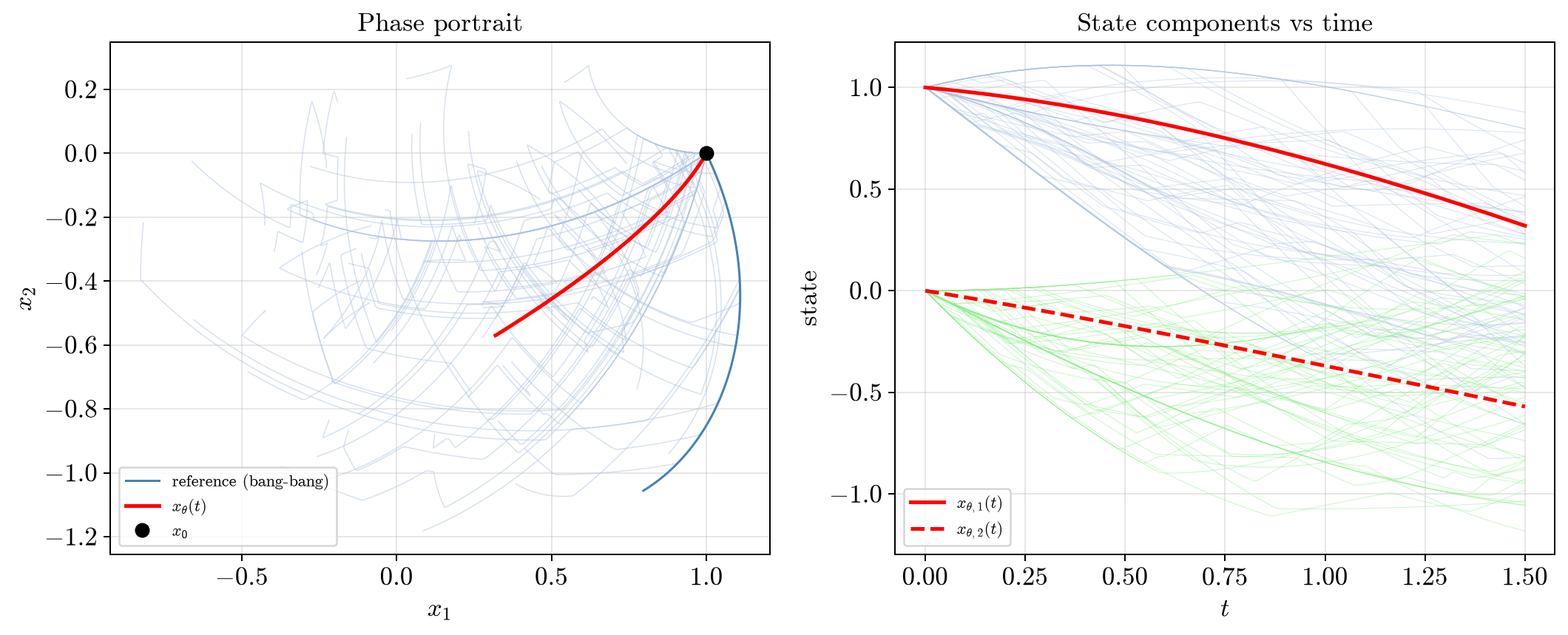}
	\caption{Phase portrait (left) and state components versus time (right)
		for the trained trajectory $x_\theta(t)$, compared with the sampled
		reference cloud $\widehat{\mathcal{R}}_{\rm ref}(t)$ (304 trajectories,
		light blue) and one representative bang-bang extremal (dark blue).}
	\label{fig:phase-linear}
\end{figure}

\paragraph{Ensemble and discrepancy from the sampled reference cloud}
Since a single DR-PINN approximates one element of the solution set
$\mathcal{S}(x_0)$ (cf.\ Section~\ref{subsec:consistency}), its distance to
the sampled reference cloud is not, by itself, a robust check. We therefore
form an ensemble of $n_{\mathrm{ens}} = 5$ trajectories (networks trained
with different random seeds) and compute, at each time, the maximum
distance of an ensemble member from the sampled reference cloud; this
quantity is then averaged over time. Precisely, the ensemble of trained
trajectories at time $t$ is
\[
  \mathcal{R}_\theta(t) = \bigl\{x_\theta^{(s)}(t) : s = 1,\dots,n_{\mathrm{ens}}\bigr\},
\]
and the discrepancy from the sampled reference cloud is measured by the
one-sided Hausdorff distance
\[
  d_H^+\!\bigl(\mathcal{R}_\theta(t),\,\widehat{\mathcal{R}}_{\rm ref}(t)\bigr)
  = \max_{a \in \mathcal{R}_\theta(t)}\min_{b \in \widehat{\mathcal{R}}_{\rm ref}(t)}\|a-b\|,
\]
which, at each time $t$, already takes the maximum over the ensemble
trajectories; averaging this quantity over the uniform time grid gives the
reported value
\[
  \overline{d_H^+}(\mathcal{R}_\theta,\widehat{\mathcal{R}}_{\rm ref}) = \LCMeanDH,
\]
which is small relative to the scale of the problem (the control set $U$ and
the reachable tube $\mathcal{R}(t)$ both have diameter of order $1$).
Because $\widehat{\mathcal{R}}_{\rm ref}(t)$ is only a finite trajectory
sample, $\overline{d_H^+}$ measures one-sided discrepancy from that sample,
not admissibility with respect to the exact reachable set $\mathcal{R}(t)$.
Its value $d_H^+$ also depends on the density of the cloud; a sparser cloud
can increase the discrepancy even for admissible trajectories. We therefore use this
quantity only to visualize the reachable tube. A certified comparison
would require the sets $\mathcal{R}(t)$ themselves, for example through
support functions or a guaranteed reachability solver. Admissibility in
this experiment is assessed instead by the dense-grid residual validation
reported next.

\paragraph{Independent dense-grid residual validation}
As announced above, and analogously to the dense-grid check of
Section~\ref{subsec:rotating-ellipsoidal}, the inclusion residual of each
ensemble member is additionally evaluated at $\LCnDense$ uniformly random
times \emph{off} the collocation grid, by solving the
QP~\eqref{eq:projection-as-qp} for
$\dist\!\bigl(\dot x_\theta(t)-Ax_\theta(t),\,B\widetilde U\bigr)$ at each
point. Over the whole ensemble, the worst-member mean inclusion distance is
$\LCResidMeanDense$ and the worst pointwise maximum is
$\LCResidMaxDense$, confirming that the trained velocities remain
(numerically) admissible between collocation points as well.

\paragraph{Computational scalability}
Computational efficiency is assessed by a scalability study that varies
$K_{\mathrm{raw}} \in \{8, 16, 32, 64, 128\}$ (at fixed $d = 2$) and
$d \in \{2, 4, 6, 8\}$ (at fixed $K_{\mathrm{raw}} = 32$), comparing the
per-epoch training time with and without the Quickhull reduction.
Table~\ref{tab:quickhull-K} lists the number of extreme vertices
$\widetilde{K}$ recovered by Quickhull for each raw vertex-set size
$K_{\mathrm{raw}} \in \{8,16,32,64,128\}$, at fixed $d=2$. As in the main
example, the raw point cloud consists of a fixed hexagonal boundary (six true
extreme points) plus $K_{\mathrm{raw}}-6$ redundant interior points, so
$\widetilde{K}=6$ irrespective of $K_{\mathrm{raw}}$.

\begin{table}[htbp]
  \centering
  \begin{tabular}{lccccc}
    \toprule
    $K_{\mathrm{raw}}$ & 8 & 16 & 32 & 64 & 128 \\
    \midrule
    $\widetilde{K}$    & 6 & 6  & 6  & 6  & 6   \\
    \bottomrule
  \end{tabular}
  \caption{Extreme vertices $\widetilde{K}$ recovered by Quickhull as
  $K_{\mathrm{raw}}$ grows, at fixed $d=2$.}
  \label{tab:quickhull-K}
\end{table}

For the dimension sweep, a fresh random drift matrix $A_d \in
\mathbb{R}^{d\times d}$ is generated for each $d \in \{2,4,6,8\}$, together
with a random polytope of $K_{\mathrm{raw}}=32$ vertices. To guarantee that
the uncontrolled system $\dot x = A_d x$ remains asymptotically stable
regardless of the random draw, $A_d$ is constructed by first sampling a
Gaussian matrix $G$ (i.i.d.\ entries $\mathcal{N}(0,0.3^2)$) and then
shifting its diagonal,
\[
  (A_d)_{ii} = G_{ii} - \Bigl(\textstyle\sum_{j=1}^{d}|G_{ij}| + 0.5\Bigr),
  \qquad
  (A_d)_{ij} = G_{ij} \ \ (j \neq i).
\]
By the Gershgorin circle theorem, every eigenvalue $\lambda$ of $A_d$ lies in
a disc centred at the real number $(A_d)_{ii}$ with radius
$R_i = \sum_{j\neq i}|(A_d)_{ij}| = \sum_{j\neq i}|G_{ij}|$, so
\[
  \operatorname{Re}(\lambda) \;\le\; (A_d)_{ii} + R_i
  \;=\; G_{ii} - |G_{ii}| - 0.5
  \;\le\; -0.5
\]
for every row $i$, since $G_{ii}\le |G_{ii}|$. Hence every eigenvalue of
$A_d$ has real part at most $-0.5$, for every random draw and every $d$: the
construction guarantees a uniform stability margin without ever having to
reject an unstable sample.

Table~\ref{tab:quickhull-d} shows that, unlike the $K$-sweep, $\widetilde{K}$
now grows with $d$: a random point cloud in higher dimensions has a larger
fraction of extreme points, and at $d=6,8$ all $32$ raw vertices are already
extreme, so no reduction is possible.

\begin{table}[htbp]
  \centering
  \begin{tabular}{lcccc}
    \toprule
    $d$             & 2  & 4  & 6  & 8  \\
    \midrule
    $\widetilde{K}$ & 12 & 23 & 32 & 32 \\
    \bottomrule
  \end{tabular}
  \caption{Extreme vertices $\widetilde{K}$ recovered by Quickhull as the
  state dimension $d$ grows, at fixed $K_{\mathrm{raw}}=32$.}
  \label{tab:quickhull-d}
\end{table}

All timings reported here are collected in the same single execution of the
replication script that produces every other number and figure of this
section: for each configuration, one discarded warm-up training run is
followed by $\LCtimingReps$ timed repetitions, and we report the mean
per-epoch time (error bars in Figure~\ref{fig:scalability-linear} show one
standard deviation across repetitions). The hardware and library versions
of the run are recorded in the accompanying manifest of the replication
package; absolute times are hardware-dependent, and only the \emph{relative}
comparison between the two variants is of interest.

Figure~\ref{fig:scalability-linear} reports the resulting per-epoch training
time with and without the offline Quickhull reduction. As a function of
$K_{\mathrm{raw}}$ (left panel), the Quickhull-reduced training time stays
essentially flat, between \LCtimeQHmin{} and \LCtimeQHmax, while the
unreduced QP grows from \LCtimeNoQHfirst{} at $K_{\mathrm{raw}}=8$ to
\LCtimeNoQHlast{} at $K_{\mathrm{raw}}=128$ -- a $\LCslowdown$
slowdown -- because the QP of Section~\ref{sec:projection-extreme-points}
then has $128$ variables instead of $6$. As a function of $d$ (right panel),
the benefit shrinks as $\widetilde{K}$ approaches $K_{\mathrm{raw}}$: at
$d=6$ and $d=8$, where Table~\ref{tab:quickhull-d} shows no reduction is
achieved, the Quickhull-reduced time (\LCtimeQHdSix, \LCtimeQHdEight) is
no longer lower than the unreduced time (\LCtimeNoQHdSix,
\LCtimeNoQHdEight). Since the Quickhull reduction is performed
\emph{offline} and does not contribute to the per-epoch time, when
$\widetilde{K}=K_{\mathrm{raw}}$ the two variants solve identical QPs, and
the residual differences between their timings are within run-to-run
measurement variability rather than a preprocessing cost.

\begin{figure}[htbp]
  \centering
  \includegraphics[width=\textwidth]{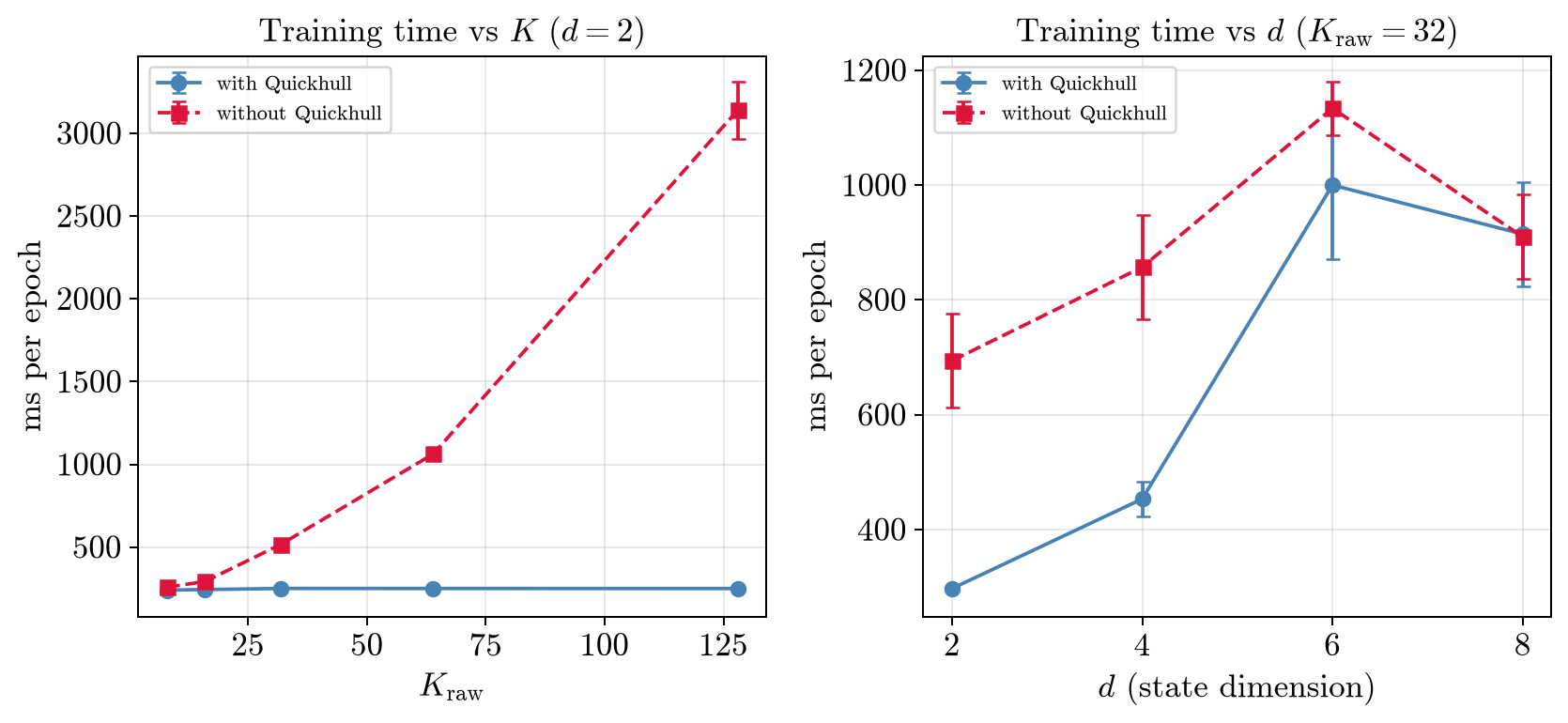}
  \caption{Per-epoch training time with (blue) and without (red) the offline
  Quickhull reduction; markers are means over $\LCtimingReps$ repetitions
  after a discarded warm-up run, error bars show one standard deviation.
  Left: as a function of $K_{\mathrm{raw}}$ at fixed
  $d=2$ (Table~\ref{tab:quickhull-K}). Right: as a function of the state
  dimension $d$ at fixed $K_{\mathrm{raw}}=32$ (Table~\ref{tab:quickhull-d}).}
  \label{fig:scalability-linear}
\end{figure}

\subsection{A Planar Differential Inclusion with a Rotating Ellipsoidal Control Constraint}
\label{subsec:rotating-ellipsoidal}

\paragraph{Problem formulation}
The two examples in Sections~\ref{subsec:linear-control}
and~\ref{subsec:parabolic-PDI} enforce the inclusion by \emph{minimizing}
the distance residual. This second ODE example illustrates the
complementary strategy anticipated in
Section~\ref{sec:description-formulation}: when the admissible set has a
tractable parametric description, admissibility can be built into the
parametrization itself, so that the distance residual vanishes
at every time-grid node of every candidate trajectory and the remaining
degrees of freedom can be used to select a specific element of the
solution set $\mathcal{S}(x_0)$ --- here, the trajectory steering the
state closest to a prescribed terminal point. This also reflects the
non-uniqueness discussed in Section~\ref{subsec:consistency}: the
distance-residual loss guarantees convergence to \emph{some} admissible
trajectory, whereas a hard-admissible parametrization allows one to
optimize \emph{which} one.

We consider trajectories $x:[0,T]\to\mathbb{R}^2$ of the differential
inclusion
\begin{equation}
	\label{eq:ellipse-inclusion}
	\dot{x}(t)\in F(t,x(t)) = g(t,x(t))+E(t),
	\qquad x(0)=x_0,
\end{equation}
whose admissible-velocity set is of Minkowski type: a nonlinear,
nonautonomous drift
\begin{equation}
	\label{eq:ellipse-drift}
	g(t,x)=
	\begin{pmatrix}
		\sin(x_1)+0.15\cos\bigl(\tfrac{2\pi t}{T}\bigr)\\[2mm]
		0.5\cos(x_2)-0.20\sin\bigl(\tfrac{2\pi t}{T}\bigr)
	\end{pmatrix},
	\qquad x=(x_1,x_2)^{\top},
\end{equation}
translated by a compact, convex, \emph{time-dependent} control set
$E(t)$, a rotating ellipse defined below. Equivalently, admissible
trajectories are exactly the solutions of the controlled system
\begin{equation}
	\label{eq:ellipse-controlled}
	\dot{x}(t)=g(t,x(t))+\xi(t),
	\qquad \xi(t)\in E(t)\ \text{ a.e.\ on } [0,T],
\end{equation}
where $\xi$ is a measurable selector of $t\mapsto E(t)$. Since
$E(t)$ does not depend on the state, this example falls into
regime~(a) of Section~\ref{sec:projection-extreme-points}; however,
unlike the polytopic set of Section~\ref{subsec:linear-control}, the
projection onto an ellipse admits no closed form (it requires the
solution of a scalar secular equation), so we also consider the
hard-admissible route here. We take $T=3$ and $x_0=(0,0)^{\top}$.

\paragraph{The rotating ellipsoidal control set}
For $t\in[0,T]$, let
\begin{equation}
	\label{eq:ellipse-axes}
	a(t)=0.3+0.4\Bigl|\sin\Bigl(\frac{\pi t}{T}\Bigr)\Bigr|,
	\qquad
	b(t)=0.15+0.25\Bigl|\cos\Bigl(\frac{\pi t}{T}\Bigr)\Bigr|,
	\qquad
	\varphi(t)=\frac{\pi t}{T},
\end{equation}
and let $R_{\varphi(t)}$ denote the rotation by the angle $\varphi(t)$.
The control set is the (filled) rotating ellipse
\begin{equation}
	\label{eq:ellipse-set}
	E(t)=\Bigl\{R_{\varphi(t)}
	\begin{pmatrix}u\\w\end{pmatrix}\in\mathbb{R}^2:
	\frac{u^2}{a(t)^2}+\frac{w^2}{b(t)^2}\leq 1\Bigr\},
\end{equation}
which is nonempty, compact, and convex for every $t$, with semi-axes and
orientation varying continuously in time; since $\varphi(0)=0$ and
$\varphi(T)=\pi$, the ellipse performs a half-turn over the horizon, and
the translated family $g(t,x(t))+E(t)$ traces a twisted tube of
admissible velocities along the drift curve
(Figure~\ref{fig:ellipse-tube}). Writing
$(u,w)^{\top}=R_{-\varphi(t)}\,\xi$ for the coordinates of
$\xi\in\mathbb{R}^2$ in the local frame of the ellipse, the
\emph{level function}
\begin{equation}
	\label{eq:ellipse-level}
	\ell(t,\xi)
	=\Bigl(\frac{u}{a(t)}\Bigr)^2+\Bigl(\frac{w}{b(t)}\Bigr)^2
\end{equation}
characterizes admissibility: $\xi\in E(t)$ if and only if
$\ell(t,\xi)\le1$, with $\ell=1$ on the boundary $\partial E(t)$.

\paragraph{Hard-admissible selector parametrization}
The selector is generated using $n_{\rm c}=24$ control nodes
$(p^j,\rho^j)\in\mathbb{R}^2\times\mathbb{R}$, $j=0,\dots,n_{\rm c}-1$,
linearly interpolated to the uniform time grid
$t_i=i\,\Delta t$, $i=0,\dots,n_t-1$, $\Delta t=T/(n_t-1)$, with
$n_t=120$; the low-dimensional nodal representation suppresses
artificial oscillations of the selector. From the interpolated values
$p(t_i)\in\mathbb{R}^2$, $\rho(t_i)\in\mathbb{R}$, define the unit-ball
direction and the sigmoidal radius
\begin{equation}
	\label{eq:ellipse-dir-radius}
	d(t_i)=\frac{p(t_i)}{\sqrt{\,|p(t_i)|^2+\varepsilon\,}},
	\qquad
	s(t_i)=\frac{1}{1+e^{-\rho(t_i)}}\in(0,1),
\end{equation}
with $\varepsilon=10^{-10}$ a normalization safeguard, and set
\begin{equation}
	\label{eq:ellipse-selector}
	\xi(t_i)
	=R_{\varphi(t_i)}
	\begin{pmatrix}
		s(t_i)\,a(t_i)\,d_1(t_i)\\
		s(t_i)\,b(t_i)\,d_2(t_i)
	\end{pmatrix}.
\end{equation}
Substituting \eqref{eq:ellipse-selector} into
\eqref{eq:ellipse-level} gives, for every grid point,
\begin{equation}
	\label{eq:ellipse-admissibility}
	\ell\bigl(t_i,\xi(t_i)\bigr)
	=s(t_i)^2\,|d(t_i)|^2
	\le s(t_i)^2<1,
\end{equation}
so the selector lies in the interior of $E(t_i)$ \emph{by construction},
for every value of the nodal variables: no penalty for constraint
violation is required, and the inclusion residual
$\dist\bigl(\dot x(t_i)-g(t_i,x(t_i)),E(t_i)\bigr)$ of
Section~\ref{sec:description-formulation} vanishes at every grid point
$t_i$. The guarantee \eqref{eq:ellipse-admissibility} is nodewise:
$\xi$ is recomputed from \eqref{eq:ellipse-selector} at the grid points.
Between nodes, the selector $\xi$ is linearly interpolated from the values
$\xi(t_i)$, and a convex combination of velocities admissible at different
times need not lie in the intermediate rotated and rescaled ellipse. For the reported optimized selector we
verified admissibility a posteriori on a dense grid of $\EllipNDense$
intermediate points: the maximum level is $\ell\approx\EllipDenseMaxL$,
with no violations. The dense-grid maximum may therefore slightly exceed the grid-node maximum
in Table~\ref{tab:ellipse-continuation}: the former also probes the
intermediate ellipses, which are not covered by the nodewise construction
\eqref{eq:ellipse-admissibility}.
An implementation that re-evaluates $d(t)$, $s(t)$, and $\xi(t)$ from
interpolated nodal values $p(t)$, $\rho(t)$ at every RK4 stage would make
the guarantee hold at every evaluation time by construction. This is
the analogue, for the inclusion constraint itself, of the hard-constraint
ansatz used for the initial condition in
Section~\ref{subsec:linear-control} and for the initial/boundary
conditions in Section~\ref{subsec:parabolic-PDI}.

\paragraph{Steering problem and discretization}
For a fixed selector, the state is propagated through
\eqref{eq:ellipse-controlled} by the classical fourth-order
Runge--Kutta scheme on the grid $\{t_i\}$, with $\xi$ evaluated between
grid points by linear interpolation. The nodal variables
$z=(p^0,\dots,p^{n_{\rm c}-1},\rho^0,\dots,\rho^{n_{\rm c}-1})
\in\mathbb{R}^{3n_{\rm c}}=\mathbb{R}^{72}$
are chosen by minimizing
\begin{equation}
	\label{eq:ellipse-cost}
	J(z)=\lambda_{\rm f}\,\bigl|x_z(T)-x_{\rm f}\bigr|^2
	+\lambda_{\xi}J_{\xi}(z)+\lambda_{\rm s}J_{\rm s}(z)
	+\lambda_{\rm c}J_{\rm c}(z)+\lambda_{\rm b}J_{\rm b}(z)
	+\lambda_{\rm u}J_{\rm u}(z),
\end{equation}
where $x_z$ denotes the discrete trajectory induced by $z$, the terminal
term steers the state to a prescribed target $x_{\rm f}$, and the
regularizers are the mean level
$J_{\xi}=\frac{1}{n_t}\sum_i\ell(t_i,\xi_z(t_i))$
(discouraging boundary-hugging selectors), the mean squared first and
second discrete differences of $\xi_z$
($J_{\rm s}$, $J_{\rm c}$: smoothness and curvature), the cubic soft
barrier $J_{\rm b}=\frac{1}{n_t}\sum_i\ell(t_i,\xi_z(t_i))^3$
(penalizing boundary saturation), and the mean squared differences of
the nodal variables ($J_{\rm u}$). The weights are
$\lambda_{\rm f}=2500$, $\lambda_{\rm s}=1$, $\lambda_{\rm c}=0.2$,
$\lambda_{\rm b}=10^{-2}$, $\lambda_{\rm u}=5\cdot10^{-2}$, and the
selector-regularization weight is driven along the continuation sequence
$\lambda_{\xi}\in\{10^{-3},10^{-2},10^{-1}\}$, each stage warm-started
from the previous one and solved by L-BFGS-B.

The target must of course be chosen inside the reachable set of
\eqref{eq:ellipse-inclusion}: the drift component
$g_2=0.5\cos(x_2)-0.2\sin(2\pi t/T)$ pushes the state upward with
magnitude $\approx0.5$ near $x_2=0$, while the vertical half-width of the
rotated ellipse $E(t)$,
\[
h_y(t)=\sqrt{a(t)^2\sin^2\varphi(t)+b(t)^2\cos^2\varphi(t)},
\]
stays between $\approx0.391$ and $0.7$ for the parameters
\eqref{eq:ellipse-axes} (the value $0.15$ is the minimal length of one
semi-axis, not the minimal vertical reach of the rotated ellipse), so
targets with markedly negative
$x_2$ are unreachable over $[0,T]$ (an extremal selector taking, at
every instant, the support point of $E(t)$ in the direction $-e_2$ only
attains $x_2(T)\approx\EllipExtremalXtwo$). We take
$x_{\rm f}=(1.8,\,0.1)^{\top}$, which lies inside but close to the
boundary of the reachable set: steering to it forces the selector to
operate near $\partial E(t)$ over most of the horizon, so the built-in
admissibility guarantee \eqref{eq:ellipse-admissibility} is genuinely
stressed.

\paragraph{Numerical results}
Table~\ref{tab:ellipse-continuation} reports, for each continuation
stage, the terminal mismatch and the mean and maximum of the level
function along the optimized selector. The terminal state matches the
target closely at every stage, while increasing
$\lambda_{\xi}$ pulls the selector towards the interior of
the ellipse (mean level decreasing from $\EllipMeanLA$ to $\EllipMeanLC$) at
essentially no cost in terminal accuracy. For a target close to the
reachable-set boundary, near-extremal velocities are needed over most of
the horizon.

\begin{table}[htbp]
	\centering
	\begin{tabular}{cccc}
		\toprule
		$\lambda_{\xi}$ & $|x_z(T)-x_{\rm f}|$ & mean $\ell$ & max $\ell$ \\
		\midrule
		$10^{-3}$ & $\EllipMismA$ & $\EllipMeanLA$ & $\EllipMaxLA$ \\
		$10^{-2}$ & $\EllipMismB$ & $\EllipMeanLB$ & $\EllipMaxLB$ \\
		$10^{-1}$ & $\EllipMismC$ & $\EllipMeanLC$ & $\EllipMaxLC$ \\
		\bottomrule
	\end{tabular}
	\caption{Continuation over the selector-regularization weight
		$\lambda_{\xi}$ (warm-started L-BFGS-B): terminal mismatch and level
		statistics of the optimized selector. Admissibility
		$\ell(t_i,\xi_z(t_i))<1$ holds at every grid point and every stage, by
		construction.}
	\label{tab:ellipse-continuation}
\end{table}

Figure~\ref{fig:ellipse-tube} displays the resulting geometry in the
space $(v_1,v_2,t)$ of velocities and time: the translated ellipses
$g(t_i,x_z(t_i))+E(t_i)$ form a twisted tube whose centerline is the
drift curve, and the selected velocity
$\dot{x}_z(t_i)=g(t_i,x_z(t_i))+\xi_z(t_i)$ runs inside the tube, close
to its wall, as quantified by the level statistics of
Table~\ref{tab:ellipse-continuation}. Figure~\ref{fig:ellipse-selector}
shows the same information in selector coordinates: the trajectory
$t\mapsto\xi_z(t)$ threads the rotating ellipse $E(t)$ centered at the
origin, with the color scale reporting the level
$\ell(t_i,\xi_z(t_i))$. Finally, Figure~\ref{fig:ellipse-state} shows
the optimized state trajectory from $x_0$ to $x_{\rm f}$ (left) together
with the admissibility level along the horizon (right), which stays
below the boundary value $\ell=1$ on the dense validation grid.

\begin{figure}[htbp]
	\centering
	\includegraphics[width=0.78\textwidth]{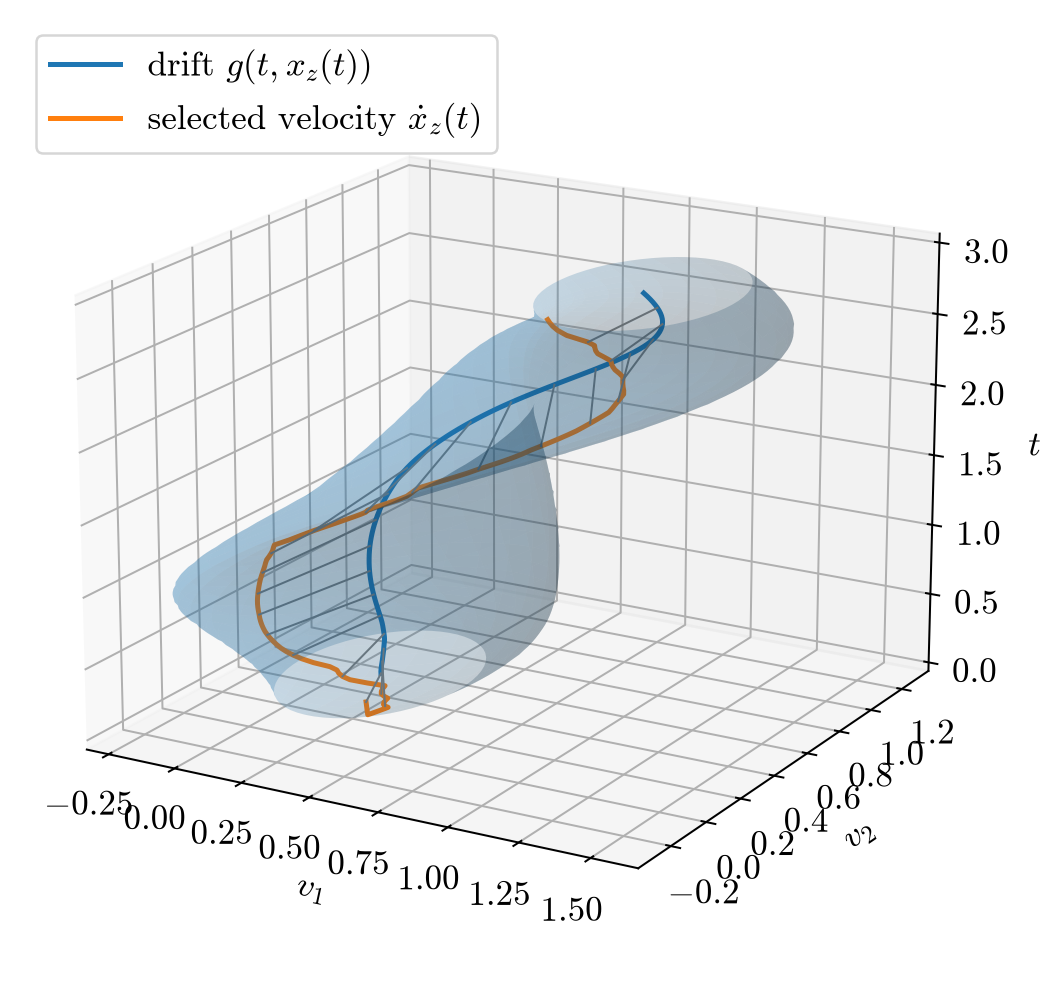}
	\caption{Admissible velocity tube of the inclusion
		\eqref{eq:ellipse-inclusion} in $(v_1,v_2,t)$ space. The surface is the
		boundary of the translated rotating ellipses $g(t,x_z(t))+E(t)$; the
		blue curve is the centerline $g(t,x_z(t))$, the orange curve the
		selected velocity $\dot{x}_z(t)$, and the grey segments the selector
		displacements $\xi_z(t_i)$.}
	\label{fig:ellipse-tube}
\end{figure}

\begin{figure}[htbp]
	\centering
	\includegraphics[width=0.78\textwidth]{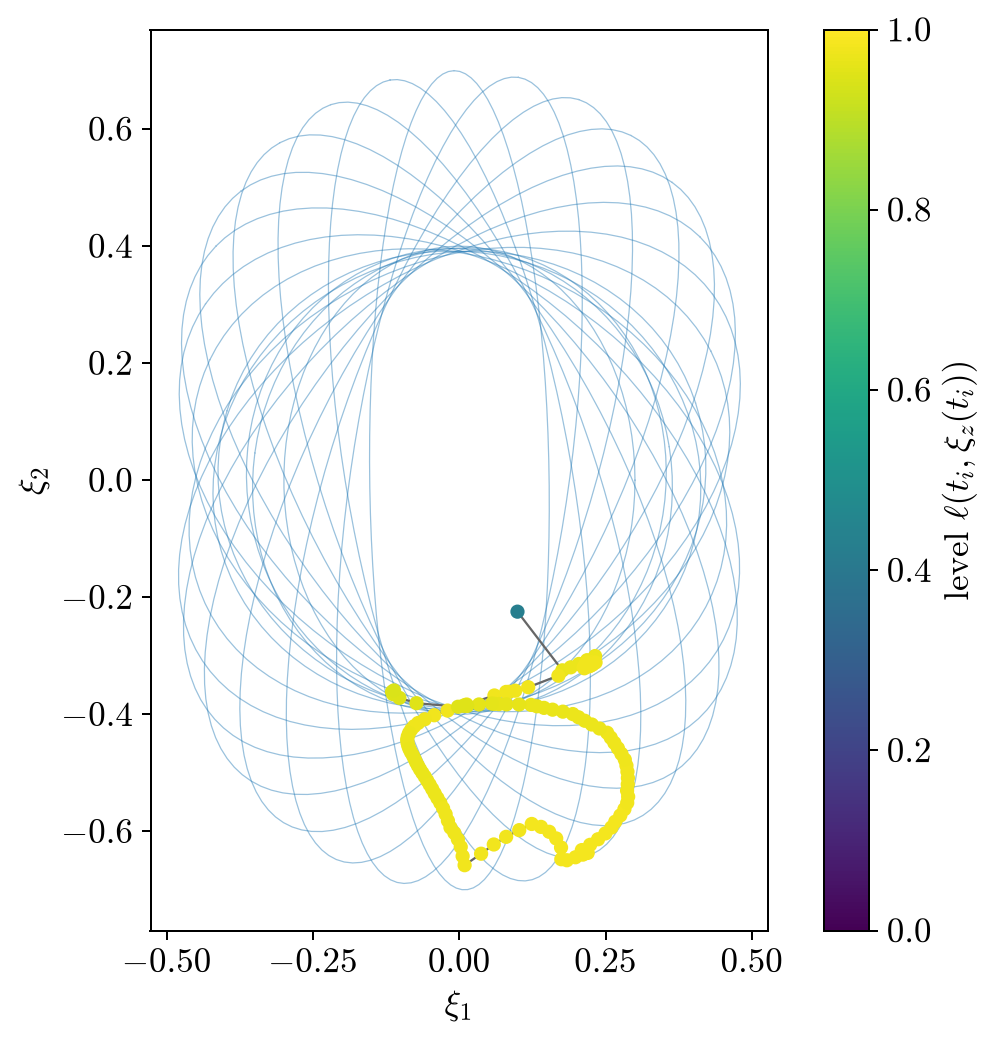}
	\caption{Optimized selector $\xi_z(t)$ inside the rotating control
		ellipse $E(t)$ (thin blue rings, drawn every sixth grid point). The
		color of the points encodes the level $\ell(t_i,\xi_z(t_i))$, which
		remains below $1$ at the plotted time-grid nodes by construction.}
	\label{fig:ellipse-selector}
\end{figure}

\begin{figure}[htbp]
	\centering
	\includegraphics[width=\textwidth]{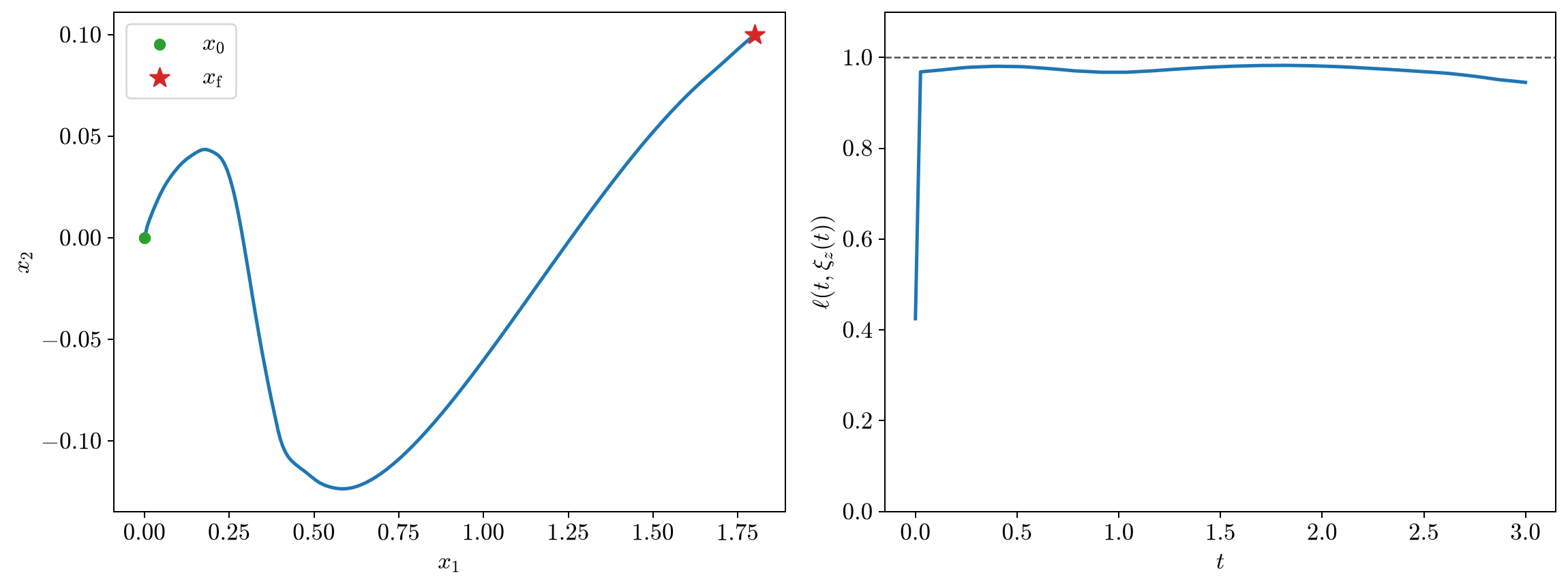}
	\caption{Left: optimized state trajectory of
		\eqref{eq:ellipse-controlled} from $x_0=(0,0)^{\top}$ to the target
		$x_{\rm f}=(1.8,0.1)^{\top}$ (final continuation stage
		$\lambda_{\xi}=10^{-1}$, terminal mismatch $\EllipMismC$). Right: admissibility level
		$\ell(t,\xi_z(t))$ along the horizon; the dashed line marks the
		boundary value $\ell=1$ of $\partial E(t)$.}
	\label{fig:ellipse-state}
\end{figure}
\paragraph{DR-PINN companion run}
The hard-admissible selector above is a constrained optimal-control
discretization: it propagates the state by RK4 and enforces the inclusion
at the time-grid nodes by construction. We therefore use a separate
DR-PINN run to test the distance-residual method on the same benchmark. Because the experiment is a steering problem with prescribed
initial and terminal states, we used the endpoint-hard neural ansatz
\begin{equation}
\label{eq:ellipse-endpoint-hard}
 x_\theta(t)
 =x_0+\frac{t}{T}(x_{\rm f}-x_0)+t(T-t)N_\theta(t),
\end{equation}
where $N_\theta$ is a $3\times64$ tanh network. Hence
$x_\theta(0)=x_0$ and $x_\theta(T)=x_{\rm f}$ exactly, and the training
objective is the pure distance residual
\begin{equation}
\label{eq:ellipse-drpinn-loss}
 \mathcal{L}(\theta)
 =\frac{1}{N}\sum_{i=1}^{N}
 \dist^2\!\bigl(\dot{x}_\theta(t_i)-g(t_i,x_\theta(t_i)),E(t_i)\bigr),
\end{equation}
with $N=512$ uniform collocation points.

The projection onto an ellipse is evaluated pointwise by Newton iteration on
the secular equation
$\sum_k(\sigma_k v_k^{\rm loc}/(\sigma_k^2+\mu))^2=1$ in the local frame of
$E(t)$, with $\sigma_1=a(t)$ and $\sigma_2=b(t)$. In the backward pass the
\emph{entire projected point} $\Pi_{E(t)}(v)$ is held constant, so automatic
differentiation returns the exact envelope gradient
$2(v-\Pi_{E(t)}(v))$. Holding only the multiplier $\mu$ constant would not
give this gradient; this distinction is important in an implementation with
a numerically computed projection. A finite-difference gradient check in the
replication code gives a relative error of $\EllipCompGradCheck$.

After $7000$ Adam epochs (hold-then-decay learning rate, double precision),
the collocation loss is $\EllipCompLoss$. On an independent dense grid of
$2000$ time points, the mean and maximum inclusion distances are respectively
$\EllipCompDenseMean$ and $\EllipCompDenseMax$, while both endpoint errors are zero
up to machine precision. The corresponding selector has mean level $\EllipCompMeanL$
and maximum level $\EllipCompMaxL$; any excess of the latter over $1$ is
consistent with the reported maximum inclusion distance and the numerical
projection tolerance. With this implementation, the DR-PINN attains the reported accuracy on
this benchmark.

The companion result is based on a single random seed. It establishes the
reported accuracy for this run but does not characterize training
robustness. A fuller robustness study would report medians and ranges over
$5$--$10$ seeds, unsuccessful-run frequency, and a comparison with a direct
parametrization of the control $\xi$ trained on the same steering
objective.
The hard-admissible selector is a complementary constrained discretization,
not a DR-PINN experiment. Conclusions about the distance residual rely on
the companion run and on the experiments of
Sections~\ref{subsec:linear-control} and~\ref{subsec:parabolic-PDI}.
The hard-admissible parametrization guarantees feasibility at the time-grid
nodes independently of training; feasibility between nodes for the reported
selector was also checked on a dense grid of $\EllipNDense$ points.

\subsection{A Reaction-Diffusion Inclusion with Relay (Sliding-Mode) Feedback}
\label{subsec:parabolic-PDI}

As a benchmark for the parabolic setting of Section~\ref{sec:pde},
we consider the reaction-diffusion inclusion
\begin{equation}
	\label{eq:relay-pde}
	\partial_t u(t,x) - \Delta u(t,x) \in \Phi(u(t,x)),
	\qquad (t,x)\in Q = (0,T)\times\Omega,
\end{equation}
with homogeneous Dirichlet boundary condition, where
the set-valued reaction term is the relay (signum) multifunction
\begin{equation}
	\label{eq:relay-Phi}
	\Phi(s) := -\lambda\,\Sign(s)
	=
	\begin{cases}
		\{-\lambda\}, & s>0,\\[2pt]
		[-\lambda,\lambda], & s=0,\\[2pt]
		\{\lambda\}, & s<0,
	\end{cases}
	\qquad \lambda>0.
\end{equation}
Physically, \eqref{eq:relay-pde}--\eqref{eq:relay-Phi} are the parabolic
analogue of the bang-bang differential inclusion of
Section~\ref{subsec:linear-control}. Indeed, at every point $(t,x)$, instead of a
smooth reaction term, the state is driven towards the equilibrium $u\equiv0$
by a switching (relay) feedback of constant magnitude $\lambda$, exactly as
a sliding-mode controller drives a state to a manifold at constant rate
regardless of its distance to that manifold. Equivalently, $\Phi=-\lambda\,\partial|\cdot|$, so that \eqref{eq:relay-pde}
may be read as the abstract evolution inclusion
\begin{equation}
	\label{eq:relay-gradient-flow}
	\partial_t u \in -\partial E(u),
	\qquad
	E(u) = \frac12\|\nabla u\|_{L^2(\Omega)}^2 + \lambda\|u\|_{L^1(\Omega)},
\end{equation}
associated with the Dirichlet energy regularized by an $L^1$ (total-mass)
penal\-ty, which connects this example to sparsity-promoting flows in
variational image processing and signal denoising.

We take $\Omega=(0,1)^2$ and
\begin{equation}
	\label{eq:relay-u0}
	u_0(x,y) = 16\,x(1-x)\,y(1-y),
\end{equation}
which is smooth, nonnegative, and satisfies $u_0\in H_0^1(\Omega)$, and take
$\lambda=0.6$ as a baseline value, together with a primary comparison sweep
$\lambda\in\{0.4,0.6,0.8\}$ to illustrate how the extinction time decreases
as $\lambda$ increases (cf.\ Proposition~\ref{prop:extinction} below). The time horizon is fixed at $T=0.3$, which
is large enough that all three relay cases have visibly reached extinction while
the $\lambda=0$ (classical heat equation) case has not, making the qualitative
contrast visible in a single plot (see Figure~\ref{fig:relay-extinction-ref} below).

We check that $\Phi$ in \eqref{eq:relay-Phi} satisfies \ref{P1}--\ref{P3} of
Section~\ref{sec:basicsPDI}, so that both the existence result
(Theorem~\ref{thm:existence-pde}) and the consistency result
(Theorem~\ref{thm:consistency-parabolic}) apply to this example.
\begin{itemize}
	\item[(P1)] For every $s\in\mathbb{R}$, $\Phi(s)$ is a singleton or the
	closed interval $[-\lambda,\lambda]$, hence nonempty, closed, and convex.
	\item[(P2)] $\Phi$ is upper semicontinuous: it is locally constant on
	$\mathbb{R}\setminus\{0\}$, and at $s=0$, $\Phi(s_n)\subset[-\lambda,\lambda]=\Phi(0)$
	for every sequence $s_n\to0$. This is the standard Filippov
	regularization of the signum function \cite{Filippov1988}.
	\item[(P3)] $\sup_{r\in\Phi(s)}|r| = \lambda \le \lambda(1+|s|)$ for every
	$s\in\mathbb{R}$, so \ref{P3} holds with $m_0=\lambda$.
\end{itemize}

\begin{remark}\label{rem:relay-wellposed}
	Existence of a strong solution for this example is guaranteed
	directly by Theorem~\ref{thm:existence-pde}, since \ref{P1}--\ref{P3}
	hold. Note that any selection of $\Phi$ in \eqref{eq:relay-Phi} is
	forced to equal $-\lambda$ for $s>0$ and $\lambda$ for $s<0$, hence
	is necessarily discontinuous at $s=0$; no reduction to a single-valued
	semilinear equation via a continuous selection is available, and the
	genuinely multivalued fixed-point argument of
	Section~\ref{sec:basicsPDI} is essential here. The example has, however,
	additional structure that yields more than existence --- namely
	uniqueness and explicit a priori bounds --- via the classical theory
	of evolution equations governed by maximal monotone operators
	\cite{Brezis1973}: rewriting \eqref{eq:relay-pde} as
	$\partial_t u + \bigl(-\Delta+\lambda\,\partial|\cdot|\bigr)(u)\ni 0$,
	we identify $-\Delta+\lambda\,\partial|\cdot|$ (with the Dirichlet
	Laplacian) as the subdifferential $\partial E$ of the functional $E$ of
	\eqref{eq:relay-gradient-flow}, extended by $E(u)=+\infty$ for
	$u\in L^2(\Omega)\setminus H_0^1(\Omega)$. Indeed, $E=\varphi_1+\varphi_2$
	with $\varphi_1(u)=\tfrac12\|\nabla u\|_{L^2(\Omega)}^2$ proper, convex,
	and lower semicontinuous on $L^2(\Omega)$, and
	$\varphi_2(u)=\lambda\|u\|_{L^1(\Omega)}$ convex and, since $\Omega$ is
	bounded, finite and continuous on all of $L^2(\Omega)$; the subdifferential
	sum rule (Moreau--Rockafellar, with the qualification condition satisfied
	because $\varphi_2$ is continuous) then gives
	$\partial E=\partial\varphi_1+\partial\varphi_2
	=-\Delta+\lambda\,\partial|\cdot|$. As the subdifferential of a proper,
	convex, lower semicontinuous functional, $\partial E$ is maximal monotone
	\cite{Brezis1973}. Here maximal monotonicity follows from the
	subdifferential representation; maximal monotonicity of the two summands
	alone would not suffice in general.
	
	The subgradient-flow theory provides existence in $L^2$ and also yields
	the regularity required for a strong solution in the sense of
	Definition~\ref{def:strong-solution-pde}, defined for every time
	horizon. Indeed, since $u_0\in H_0^1(\Omega)$ and $\Omega$ is bounded,
	$E(u_0)<\infty$, and the theory of subgradient flows with initial datum
	in the domain of the energy \cite{Brezis1973} produces, for every
	$T>0$, a unique function $u\in H^1(0,T;L^2(\Omega))$ with
	$u(t)\in D(\partial E)$ and
	$-\partial_tu(t)\in\partial E(u(t))$ for a.e.\ $t$. By the
	subdifferential sum rule above, for a.e.\ $t$ one has
	$u(t)\in H^2(\Omega)\cap H_0^1(\Omega)$ and there exists
	$\xi(t,\cdot)\in\partial\|\cdot\|_{L^1(\Omega)}(u(t))$, i.e.,
	$\xi(t,x)\in\Sign(u(t,x))$ a.e., such that
	\[
	\partial_tu-\Delta u+\lambda\,\xi=0
	\quad\text{a.e.\ on }Q.
	\]
	The selection is automatically measurable, since
	$\xi=(\Delta u-\partial_tu)/\lambda$ pointwise a.e., and
	$|\xi|\le1$ gives $\xi\in L^\infty(Q)\subset L^2(Q)$. Consequently,
	\[
	\Delta u=\partial_tu+\lambda\,\xi\in L^2(Q),
	\]
	and the $H^2$ elliptic regularity on convex or $C^{1,1}$ domains
	(see \cite[Theorem~3.2.1.2]{Grisvard1985} and recall that $\Omega=(0,1)^2$ is
	convex) yields
	$u\in L^2\bigl(0,T;H^2(\Omega)\cap H_0^1(\Omega)\bigr)$. Hence
	$u\in\mathcal{W}(0,T)$ is a strong solution of
	\eqref{eq:relay-pde}, recovering $\mathcal{S}(u_0)\neq\varnothing$
	independently of Theorem~\ref{thm:existence-pde}.
	
	The stated regularity can also be read off from explicit $L^2$ estimates.
	For gradient flows
	$-\partial_tu(t)\in\partial E(u(t))$ with $u_0\in D(E)$, the chain
	rule for convex subdifferentials \cite[Lemme~3.3]{Brezis1973} gives
	$\frac{d}{dt}E(u(t))=-\|\partial_tu(t)\|_{L^2(\Omega)}^2$ for a.e.\
	$t$, whence, integrating on $(0,T)$ and using $E\ge0$,
	\begin{align*}
	\int_0^T\|\partial_tu(t)\|_{L^2(\Omega)}^2\,dt
	& = E(u_0)-E(u(T))
	\le E(u_0)
	\\ &	
	= \tfrac12\|\nabla u_0\|_{L^2(\Omega)}^2
	+\lambda\|u_0\|_{L^1(\Omega)}.
	\end{align*}
	The pointwise bound $|\xi|\le1$ a.e.\ on $Q$ also gives
	\[
	\|\xi\|_{L^2(Q)}\le|Q|^{1/2}=(T\,|\Omega|)^{1/2}.
	\]
	Combining these bounds with the triangle inequality in $L^2(Q)$,
	\[
	\|\Delta u\|_{L^2(Q)}
	\le\|\partial_tu\|_{L^2(Q)}+\lambda\,\|\xi\|_{L^2(Q)}
	\le E(u_0)^{1/2}+\lambda\,(T\,|\Omega|)^{1/2},
	\]
	and the constant in the elliptic estimate
	$\|u(t)\|_{H^2(\Omega)}\le C(\Omega)\,\|\Delta u(t)\|_{L^2(\Omega)}$
	of \cite[Theorem~3.2.1.2]{Grisvard1985} depends only on the (convex)
	domain. All three memberships
	$\partial_tu\in L^2(0,T;L^2(\Omega))$, $\xi\in L^2(Q)$, and
	$u\in L^2(0,T;H^2(\Omega)\cap H_0^1(\Omega))$ therefore come with
	explicit bounds in terms of $E(u_0)$, $\lambda$, $T$, and $|\Omega|$
	alone.
	
	Uniqueness of strong solutions also holds. Let
	$u_1,u_2\in\mathcal{W}(0,T)$ be strong solutions with the same initial
	datum, with pointwise selections $\xi_i(t,x)\in\Sign(u_i(t,x))$ such
	that $\partial_tu_i-\Delta u_i=-\lambda\,\xi_i$ a.e.\ on $Q$.
	Subtracting the two equations and testing with $u_1-u_2$ gives, for
	a.e.\ $t\in(0,T)$,
	\[
	\frac12\frac{d}{dt}\|u_1-u_2\|_{L^2(\Omega)}^2
	+\|\nabla(u_1-u_2)\|_{L^2(\Omega)}^2
	+\lambda\int_\Omega(\xi_1-\xi_2)(u_1-u_2)\,dx
	=0.
	\]
	Since the graph of $\Sign$ is monotone,
	$(\xi_1-\xi_2)(u_1-u_2)\ge0$ a.e., whence
	$\frac{d}{dt}\|u_1-u_2\|_{L^2(\Omega)}^2\le0$; as $u_1(0)=u_2(0)=u_0$,
	we conclude $u_1=u_2$. Thus $\mathcal{S}(u_0)$ is a singleton, in sharp
	contrast with the genuinely multivalued solution sets of
	Section~\ref{subsec:linear-control}.
\end{remark}

For the finite-time extinction analysis, multiply \eqref{eq:relay-pde} by $u(t,\cdot)$ and integrate over $\Omega$.
Using that $s\cdot v = -\lambda|s|$ for every $v\in\Phi(s)$ and every
$s\in\mathbb{R}$, a formal computation (rigorous for strong solutions) gives
the energy identity
\begin{equation}
	\label{eq:relay-energy}
	\frac12\frac{d}{dt}\|u(t)\|_{L^2(\Omega)}^2
	+
	\|\nabla u(t)\|_{L^2(\Omega)}^2
	=
	-\lambda\,\|u(t)\|_{L^1(\Omega)}.
\end{equation}
The extra dissipation term $-\lambda\|u(t)\|_{L^1(\Omega)}$, absent in the
classical heat equation ($\lambda=0$), is responsible for a qualitatively
different phenomenon: the solution reaches the equilibrium $u\equiv0$ in
\emph{finite time} rather than merely decaying exponentially --- a behavior
familiar from evolution problems governed by dissipation terms that are
positively homogeneous of degree one, such as the total variation flow
\cite{AndreuCaselleMazon2004}. The energy identity
\eqref{eq:relay-energy} alone does not yield finite-time extinction. For
the nonnegative initial datum \eqref{eq:relay-u0}, extinction, an explicit
upper bound for the extinction time, and monotonicity with respect to
$\lambda$ follow from the comparison argument in the next proposition. Once $u(t,\cdot)$
vanishes on part of $\Omega$, the relay may ``slide'' there
($\Delta u(t,x)\in[-\lambda,\lambda]$
pointwise, consistently with $u\equiv0$), anchoring the solution at
equilibrium. A classical pointwise-residual PINN would have to choose one branch of
$\Phi$ in advance and would not represent this sliding behavior directly.

\begin{proposition}[Finite-time extinction and monotonicity in $\lambda$]
	\label{prop:extinction}
	Let $u_0\in H_0^1(\Omega)\cap L^\infty(\Omega)$ with $u_0\ge0$, and,
	for $\lambda>0$, let $u_\lambda$ denote the unique strong solution of
	\eqref{eq:relay-pde}--\eqref{eq:relay-Phi} provided by
	Remark~\ref{rem:relay-wellposed}, regarded as defined for all
	$t\ge0$. Then:
	\begin{enumerate}
		\item[\rm(a)] $u_\lambda(t,x)\ge0$ for a.e.\ $(t,x)$;
		\item[\rm(b)] $0\le u_\lambda(t,x)\le
		\bigl(\|u_0\|_{L^\infty(\Omega)}-\lambda t\bigr)_+$ a.e.; in
		particular, $u_\lambda(t,\cdot)\equiv0$ for every
		$t\ge\|u_0\|_{L^\infty(\Omega)}/\lambda$, so the extinction time
		$t^*(\lambda):=\inf\{t\ge0:u_\lambda(t,\cdot)\equiv0\}$ satisfies
		\[
		t^*(\lambda)\le\frac{\|u_0\|_{L^\infty(\Omega)}}{\lambda};
		\]
		\item[\rm(c)] if $\lambda_2>\lambda_1>0$, then
		$0\le u_{\lambda_2}\le u_{\lambda_1}$ a.e., and consequently
		$t^*(\lambda_2)\le t^*(\lambda_1)$.
	\end{enumerate}
\end{proposition}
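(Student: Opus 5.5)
All three parts can be handled by $L^2$ comparison arguments of Stampacchia type. We test the difference of two inclusions with the positive part of the relevant difference and use the monotonicity of the graph of $\Sign$. Every function involved lies in $\mathcal{W}(0,T)$, so the test functions $(\cdot)_\pm\in L^2(0,T;H_0^1(\Omega))$ are admissible. The identity $\frac{d}{dt}\frac12\|w_+\|^2_{L^2}=\int_\Omega \partial_t w\,w_+\,dx$ holds for $w\in H^1(0,T;L^2)\cap L^2(0,T;H^1)$. The integration by parts $-\int\Delta w\,w_+=\|\nabla w_+\|^2$ is valid whenever $w_+\in H_0^1$, which is the case for $w=u$ and for $w=u-\psi$ with $\psi\ge0$ on the boundary.

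\emph{Part (a).} Let $\xi\in\Sign(u)$ be the selection from Remark~\ref{rem:relay-wellposed}, so that $\partial_t u-\Delta u=-\lambda\xi$. Test with $-u_-=\min(u,0)$, i.e.\ compute $\frac12\frac{d}{dt}\|u_-\|^2$. On $\{u<0\}$ we have $\xi=-1$, hence $-\lambda\xi\,(-u_-)=-\lambda u_-\le0$. This gives $\frac12\frac{d}{dt}\|u_-\|^2+\|\nabla u_-\|^2\le0$, and since $u_-(0)=0$ we get $u_-\equiv0$.

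\emph{Part (b).} Set $M=\|u_0\|_{L^\infty}$ and compare with the supersolution $\psi(t)=(M-\lambda t)_+$, which is spatially constant, Lipschitz in $t$, and nonnegative on $\partial\Omega$. Let $w=u-\psi$ and test with $w_+$. On $\{w>0\}$ we have $u>\psi\ge0$, so $\xi=1$ and $u>0$ there. Moreover $\partial_t\psi=-\lambda$ on $\{t<M/\lambda\}$ and $\partial_t\psi=0$ for $t>M/\lambda$; in both regimes $-\lambda\xi-\partial_t\psi\le0$ on $\{w>0\}$. We also have $\Delta\psi=0$, so
\[
\tfrac12\tfrac{d}{dt}\|w_+\|^2+\|\nabla w_+\|^2=\int_\Omega(-\lambda\xi-\partial_t\psi)w_+\,dx\le0 .
\]
Since $w_+(0)=(u_0-M)_+=0$, it follows that $u\le\psi$. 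Combined with (a), this gives the bound and the estimate for $t^*$. For the a.e.\ meaning of $\xi$ on the set $\{w>0\}$, note that $u\in\mathcal{W}$ and that $\partial_t u=\Delta u=0$ a.e.\ on level sets is not needed here, because $u>0$ there.

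\emph{Part (c).} Let $w=u_{\lambda_2}-u_{\lambda_1}$ and test with $w_+$. On $\{w>0\}$ we have $u_{\lambda_2}>u_{\lambda_1}\ge0$ by (a), so $\xi_2=1$ and $-\lambda_2\xi_2=-\lambda_2$. Since $\xi_1\le1$, we get $-\lambda_1\xi_1\ge-\lambda_1$. Hence $(-\lambda_2\xi_2+\lambda_1\xi_1)w_+\le(-\lambda_2+\lambda_1)w_+\le0$. With $w(0)=0$ this yields $w_+\equiv0$. If $u_{\lambda_1}(t)\equiv0$, then $u_{\lambda_2}(t)\equiv0$, which gives $t^*(\lambda_2)\le t^*(\lambda_1)$.

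\textbf{Main obstacle.} The key step is justifying the chain rule for $\|w_+\|^2$ and the sign information on $\xi$ at points where $u$ touches $0$. I would handle both uniformly by only ever using $\xi$ on sets where $u>0$ or $u<0$ strictly. I would also invoke the standard lemma that $w\in\mathcal{W}(0,T)$ implies $t\mapsto\|w_+(t)\|^2$ is absolutely continuous with the stated derivative. A further point is that $\psi$ has only a kink in $t$. This is harmless because $\psi\in W^{1,\infty}(0,T)$, so $w=u-\psi$ still lies in $H^1(0,T;L^2)\cap L^2(0,T;H^1)$, although $w_+$ need not vanish on $\partial\Omega$ in the wrong direction; in fact $w\le0$ on $\partial\Omega$ since $\psi\ge0$, so $w_+\in H_0^1$. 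Finally, the time-global definition of $u_\lambda$ follows from uniqueness: solutions on different horizons coincide on their common interval.
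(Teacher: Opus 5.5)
Your proof is correct and follows the paper's argument in all three parts: testing with $u^-$ using $\xi=-1$ on $\{u<0\}$, comparing with a spatially constant supersolution through the positive part of $u-\psi$, and testing with $(u_{\lambda_2}-u_{\lambda_1})^+$ using $\xi_2=1$ on its support. The one small difference is in (b): you compare with the truncated supersolution $(M-\lambda t)_+$ on the whole time axis, which gives $u\equiv0$ for $t\ge M/\lambda$ directly, whereas the paper compares with $M-\lambda t$ only on $[0,M/\lambda)$ and then concludes using $L^2$-continuity at $t_0=M/\lambda$ and uniqueness restarted from the zero state.
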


\begin{proof}
	Throughout, $u=u_\lambda$ represents the strong solution and
	$\xi(t,x)\in\Sign(u(t,x))$ the associated selection with
	$\partial_tu-\Delta u=-\lambda\xi$ a.e.\ (cf.\
	Remark~\ref{rem:relay-wellposed}); all the test computations below are
	rigorous for strong solutions.
	
	(a) Let $u^-:=\max(-u,0)$, so that $u^-(t,\cdot)\in H_0^1(\Omega)$ for
	a.e.\ $t$. Multiplying the equation by $-u^-$ and integrating over
	$\Omega$ gives, for a.e.\ $t$,
	\[
	\frac12\frac{d}{dt}\|u^-(t)\|_{L^2(\Omega)}^2
	+\|\nabla u^-(t)\|_{L^2(\Omega)}^2
	=\lambda\int_\Omega\xi\,u^-\,dx
	=-\lambda\int_\Omega u^-\,dx\le0,
	\]
	where we used that $\xi=-1$ a.e.\ on $\{u<0\}$ and $u^-=0$ elsewhere.
	Since $u^-(0)=u_0^-=0$, we conclude $u^-\equiv0$, i.e., $u\ge0$.
	
	(b) Set $M:=\|u_0\|_{L^\infty(\Omega)}$, $t_0:=M/\lambda$, and
	consider the spatially constant supersolution
	$v(t):=M-\lambda t$ on $[0,t_0)$. Let $w:=u-v$. For $t\in(0,t_0)$ one
	has $v(t)>0$, hence $w<0$ on $(0,t_0)\times\partial\Omega$ and
	$w^+:=\max(w,0)$ satisfies $w^+(t,\cdot)\in H_0^1(\Omega)$. Moreover,
	$\partial_tv-\Delta v=-\lambda$, so
	\[
	\partial_tw-\Delta w=\lambda(1-\xi)
	\quad\text{a.e.\ in }(0,t_0)\times\Omega,
	\]
	and on $\{w>0\}$ we have $u>v>0$, hence $\xi=1$ and the right-hand
	side vanishes there. Testing with $w^+$ therefore gives
	\[
	\frac12\frac{d}{dt}\|w^+(t)\|_{L^2(\Omega)}^2
	+\|\nabla w^+(t)\|_{L^2(\Omega)}^2
	=\lambda\int_{\{w>0\}}(1-\xi)\,w^+\,dx=0,
	\]
	and since $w^+(0)=(u_0-M)^+=0$, we obtain $u\le v$ on $[0,t_0)$.
	Together with (a), this proves the pointwise bound in (b) on
	$[0,t_0)$; by continuity of $t\mapsto u(t)$ in $L^2(\Omega)$,
	$u(t_0)=0$. Finally, the identically zero function is a strong
	solution of \eqref{eq:relay-pde} with zero initial datum
	(indeed $0=\partial_t0-\Delta 0\in[-\lambda,\lambda]=\Phi(0)$), so by
	the uniqueness established in Remark~\ref{rem:relay-wellposed},
	restarted at time $t_0$, $u(t)\equiv0$ for all $t\ge t_0$.
	
	(c) Let $w:=u_{\lambda_2}-u_{\lambda_1}$, with selections
	$\xi_i\in\Sign(u_{\lambda_i})$. Then
	$\partial_tw-\Delta w=-\lambda_2\xi_2+\lambda_1\xi_1$, and on
	$\{w>0\}$ we have $u_{\lambda_2}>u_{\lambda_1}\ge0$ by (a), hence
	$\xi_2=1$ and
	$-\lambda_2\xi_2+\lambda_1\xi_1\le-\lambda_2+\lambda_1<0$. Both
	$u_{\lambda_i}$ vanish on $\partial\Omega$, so
	$w^+(t,\cdot)\in H_0^1(\Omega)$, and testing with $w^+$ gives
	\[
	\frac12\frac{d}{dt}\|w^+(t)\|_{L^2(\Omega)}^2
	+\|\nabla w^+(t)\|_{L^2(\Omega)}^2
	\le\int_{\{w>0\}}(\lambda_1\xi_1-\lambda_2)\,w^+\,dx\le0 .
	\]
	With $w^+(0)=0$ this yields $w^+\equiv0$, i.e.,
	$u_{\lambda_2}\le u_{\lambda_1}$, and the ordering of the extinction
	times follows immediately from the definition of $t^*$.
\end{proof}

For the initial datum \eqref{eq:relay-u0} one has
$\|u_0\|_{L^\infty(\Omega)}=1$, so Proposition~\ref{prop:extinction}(b)
gives the a priori bound $t^*(\lambda)\le1/\lambda$, i.e., $2.5$, $1.6\overline{6}$,
and $1.25$ for $\lambda\in\{0.4,0.6,0.8\}$. The bound is far from sharp
(the observed extinction times reported below are an order of magnitude
smaller), since the comparison argument discards the additional decay
produced by diffusion; its role is purely qualitative: it certifies
finite-time extinction and the monotone dependence on $\lambda$.
Identity~\eqref{eq:relay-energy} and
Proposition~\ref{prop:extinction} also provide solver-independent
numerical checks: the decay of $\|u_\theta(t)\|_{L^2(\Omega)}^2$ along the
trained trajectory should be strictly faster than that of the corresponding
heat equation ($\lambda=0$) with the same initial datum, and the detected
extinction times must be nonincreasing in $\lambda$ (strictly decreasing in
our experiments).

\paragraph{Adaptive collocation sampling}
Because $\Phi$ has a genuine jump discontinuity at $s=0$, the distance
residual is hardest to drive to zero in a neighborhood of the (a priori
unknown, time-dependent) extinction front $$\{(t,x):u_\theta(t,x)=0\}.$$ This
is precisely where a smooth network output must approximate a discontinuous
right-hand side, and our experiments confirm that the residual after
training is concentrated there rather than spread uniformly over $Q$.
Purely uniform collocation under-resolves this thin, moving region relative
to its share of the loss. We therefore complement uniform collocation with
a \emph{front-focused, state-based} adaptive sampling scheme inspired by
the RAR method of Lu, Meng, Mao, and Karniadakis
\cite{LuMengMaoKarniadakis2021DeepXDE}; unlike residual-based RAR, our
scheme selects points by the smallest $|u_\theta|$ (a proxy for proximity
to the extinction front) rather than by the largest residual. Precisely, at fixed intervals during training,
a large pool of candidate points is drawn uniformly over $Q$, the points
with the smallest $|u_\theta|$ are retained as an auxiliary ``hard pool''
tracking the current estimate of the extinction front, and a fraction of
every subsequent training batch is drawn from this pool — refreshed
periodically as the front moves — while the remaining points continue to be
sampled uniformly over $Q$ to preserve global coverage of the domain.
In our experiments the batch size is $N_{\rm col}=2000$ collocation points per
epoch; the adaptive fraction is $30\%$ (600 points), drawn from a hard pool
of $n_{\rm pool}=4000$ candidate points selected from a uniform draw of
$20\,000$ points over $Q$ by retaining those with smallest $|u_\theta|$.
The hard pool is refreshed every $500$ training epochs as the extinction
front advances; the remaining $70\%$ of each batch is drawn uniformly over
$Q$ to preserve global domain coverage. The campaign therefore includes a controlled sampler ablation
(front-based adaptive vs.\ residual-based RAR vs.\ uniform collocation) at
the baseline configuration. At these settings the three samplers are
statistically indistinguishable on every reported metric
(Table~\ref{tab:relay-ablation}); the front-based scheme is retained as a
safeguard for thinner bands rather than as a demonstrated speedup.

\paragraph{Reference solution}
Unlike the bang-bang example of Section~\ref{subsec:linear-control}, where
$F$ is genuinely multivalued and the ensemble/Hausdorff comparison of
Section~\ref{subsec:linear-control} is required, uniqueness of strong
solutions holds here, as proved in Remark~\ref{rem:relay-wellposed}:
$\mathcal{S}(u_0)$ is a singleton. A single trained
DR-PINN can therefore be validated directly against one reference solution,
obtained by a first-order operator-splitting scheme on a uniform
$49\times49$ interior grid with time step $\Delta t=5\times10^{-4}$.
Each time step alternates an implicit diffusion solve,
$v=(I-\Delta t\,\Delta_h)^{-1}u^n$ (with the 5-point finite-difference
Laplacian $\Delta_h$ and homogeneous Dirichlet boundary conditions), and an
exact proximal step,
\[
u^{n+1}=\Sign(v)\max(|v|-\lambda\Delta t,\,0),
\]
the pointwise proximal operator of $s\mapsto\lambda\Delta t\,|s|$.
This scheme is able to represent exact zeros: any grid point for which
$|v_i|\le\lambda\Delta t$ is set to exactly zero \emph{at that time
step} by the soft-thresholding operation (the subsequent diffusion
half-step may of course make it nonzero again). In particular, once the
whole field has decayed below the threshold, the scheme reproduces the
identically zero equilibrium without smoothing, so finite-time extinction
is captured sharply. Being a first-order splitting on a fixed grid, the
scheme carries discretization errors of order $O(\Delta t)+O(h^2)$ that
may systematically shift the detected extinction instants; a refinement
study therefore quantifies the reliability of the reported
$t^*_{\mathrm{ref}}$. Table~\ref{tab:relay-ref-convergence} reports, for
each solver configuration, the detected extinction time $t^*$ (first grid
time at which the discrete field is exactly zero; the zero state is
persistent up to $T$ in every run), its change $\Delta t^*$ with respect
to the baseline configuration, and the maximum change of the
$L^2(\Omega)$-norm decay curve and of the solution profile shortly before
extinction. Refining the mesh from $49\times49$ to $97\times97$ and
$193\times193$ at fixed $\Delta t$ leaves the detected $t^*$ unchanged
for all three values of $\lambda$, so the baseline mesh is already
converged in space; halving and quartering $\Delta t$ shifts $t^*$ by at
most $6.3\times10^{-4}$ --- about one baseline time step, which is also
the quantization of the detection itself, since $t^*$ is read off the
discrete time grid --- while the norm curves and pre-extinction profiles
change by at most $7\times10^{-4}$ and $2\times10^{-4}$, respectively.
The reference extinction times are therefore reliable to three decimal
places, with an uncertainty of order one baseline step in the fourth, and
the monotone ordering $t^*(0.8)<t^*(0.6)<t^*(0.4)$ is preserved at every
refinement level. The refinement study is part of the replication
pipeline (reference stage of this experiment) and is regenerated together
with all other reported numbers.

\begin{table}[htbp]
  \centering
  \small
  \setlength{\tabcolsep}{5pt}
\begin{tabular}{lcccccc}
\toprule
$\lambda$ & grid & $\Delta t$ & $t^*$ & $\Delta t^*$ & $\max|\Delta\|u\||$ & $\max|\Delta u_{\mathrm{pre}}|$ \\
\midrule
$0.4$ & $49\times49$ & $0.0005$ & 0.1845 & +0.0000 & $0$ & $0$ \\
$0.4$ & $49\times49$ & $0.00025$ & 0.1840 & -0.0005 & $4.5\times 10^{-4}$ & $1.7\times 10^{-4}$ \\
$0.4$ & $49\times49$ & $0.000125$ & 0.1839 & -0.0006 & $6.7\times 10^{-4}$ & $1.1\times 10^{-4}$ \\
$0.4$ & $97\times97$ & $0.0005$ & 0.1845 & +0.0000 & $5.1\times 10^{-5}$ & $4.1\times 10^{-5}$ \\
$0.4$ & $97\times97$ & $0.00025$ & 0.1840 & -0.0005 & $5.0\times 10^{-4}$ & $1.3\times 10^{-4}$ \\
$0.4$ & $193\times193$ & $0.00025$ & 0.1840 & -0.0005 & $5.1\times 10^{-4}$ & $1.2\times 10^{-4}$ \\
$0.6$ & $49\times49$ & $0.0005$ & 0.1645 & +0.0000 & $0$ & $0$ \\
$0.6$ & $49\times49$ & $0.00025$ & 0.1643 & -0.0003 & $4.3\times 10^{-4}$ & $8.9\times 10^{-5}$ \\
$0.6$ & $49\times49$ & $0.000125$ & 0.1640 & -0.0005 & $6.4\times 10^{-4}$ & $1.3\times 10^{-4}$ \\
$0.6$ & $97\times97$ & $0.0005$ & 0.1645 & +0.0000 & $5.3\times 10^{-5}$ & $5.3\times 10^{-5}$ \\
$0.6$ & $97\times97$ & $0.00025$ & 0.1643 & -0.0003 & $4.8\times 10^{-4}$ & $8.9\times 10^{-5}$ \\
$0.6$ & $193\times193$ & $0.00025$ & 0.1643 & -0.0003 & $5.0\times 10^{-4}$ & $8.7\times 10^{-5}$ \\
$0.8$ & $49\times49$ & $0.0005$ & 0.1505 & +0.0000 & $0$ & $0$ \\
$0.8$ & $49\times49$ & $0.00025$ & 0.1502 & -0.0003 & $4.1\times 10^{-4}$ & $1.2\times 10^{-4}$ \\
$0.8$ & $49\times49$ & $0.000125$ & 0.1501 & -0.0004 & $6.2\times 10^{-4}$ & $1.8\times 10^{-4}$ \\
$0.8$ & $97\times97$ & $0.0005$ & 0.1505 & +0.0000 & $5.6\times 10^{-5}$ & $6.5\times 10^{-5}$ \\
$0.8$ & $97\times97$ & $0.00025$ & 0.1502 & -0.0003 & $4.7\times 10^{-4}$ & $1.2\times 10^{-4}$ \\
$0.8$ & $193\times193$ & $0.00025$ & 0.1502 & -0.0003 & $4.8\times 10^{-4}$ & $1.1\times 10^{-4}$ \\
\bottomrule
\end{tabular}
{lcccccc}
  \toprule
  $\lambda$ & grid & $\Delta t$ & $t^*$ & $\Delta t^*$ & $\max|\Delta\|u\||$ & $\max|\Delta u_{\mathrm{pre}}|$ \\
  \midrule
  $0.4$ & $49\times49$ & $0.0005$ & 0.1845 & +0.0000 & $0$ & $0$ \\
  $0.4$ & $49\times49$ & $0.00025$ & 0.1840 & -0.0005 & $4.5\times 10^{-4}$ & $1.7\times 10^{-4}$ \\
  $0.4$ & $49\times49$ & $0.000125$ & 0.1839 & -0.0006 & $6.7\times 10^{-4}$ & $1.1\times 10^{-4}$ \\
  $0.4$ & $97\times97$ & $0.0005$ & 0.1845 & +0.0000 & $5.1\times 10^{-5}$ & $4.1\times 10^{-5}$ \\
  $0.4$ & $97\times97$ & $0.00025$ & 0.1840 & -0.0005 & $5.0\times 10^{-4}$ & $1.3\times 10^{-4}$ \\
  $0.4$ & $193\times193$ & $0.00025$ & 0.1840 & -0.0005 & $5.1\times 10^{-4}$ & $1.2\times 10^{-4}$ \\
  $0.6$ & $49\times49$ & $0.0005$ & 0.1645 & +0.0000 & $0$ & $0$ \\
  $0.6$ & $49\times49$ & $0.00025$ & 0.1643 & -0.0003 & $4.3\times 10^{-4}$ & $8.9\times 10^{-5}$ \\
  $0.6$ & $49\times49$ & $0.000125$ & 0.1640 & -0.0005 & $6.4\times 10^{-4}$ & $1.3\times 10^{-4}$ \\
  $0.6$ & $97\times97$ & $0.0005$ & 0.1645 & +0.0000 & $5.3\times 10^{-5}$ & $5.3\times 10^{-5}$ \\
  $0.6$ & $97\times97$ & $0.00025$ & 0.1643 & -0.0003 & $4.8\times 10^{-4}$ & $8.9\times 10^{-5}$ \\
  $0.6$ & $193\times193$ & $0.00025$ & 0.1643 & -0.0003 & $5.0\times 10^{-4}$ & $8.7\times 10^{-5}$ \\
  $0.8$ & $49\times49$ & $0.0005$ & 0.1505 & +0.0000 & $0$ & $0$ \\
  $0.8$ & $49\times49$ & $0.00025$ & 0.1502 & -0.0003 & $4.1\times 10^{-4}$ & $1.2\times 10^{-4}$ \\
  $0.8$ & $49\times49$ & $0.000125$ & 0.1501 & -0.0004 & $6.2\times 10^{-4}$ & $1.8\times 10^{-4}$ \\
  $0.8$ & $97\times97$ & $0.0005$ & 0.1505 & +0.0000 & $5.6\times 10^{-5}$ & $6.5\times 10^{-5}$ \\
  $0.8$ & $97\times97$ & $0.00025$ & 0.1502 & -0.0003 & $4.7\times 10^{-4}$ & $1.2\times 10^{-4}$ \\
  $0.8$ & $193\times193$ & $0.00025$ & 0.1502 & -0.0003 & $4.8\times 10^{-4}$ & $1.1\times 10^{-4}$ \\
  \bottomrule
  \end{tabular}}
  \caption{Refinement study for the reference solver: detected extinction
    time $t^*$ (first grid time with an exactly zero discrete field;
    persistent up to $T$ in all runs), its change $\Delta t^*$ with
    respect to the baseline configuration ($49\times49$,
    $\Delta t=5\times10^{-4}$), the maximum change of the
    $L^2(\Omega)$-norm decay curve, and the maximum change of the
    solution profile shortly before extinction (at $0.9\,t^*$,
    interpolated to the baseline grid). At fixed $\Delta t$ the detected
    $t^*$ is independent of the mesh; the variation under
    $\Delta t$-refinement is of the order of one baseline time step and
    is dominated by the temporal quantization of the detection.}
  \label{tab:relay-ref-convergence}
\end{table}

\paragraph{Network architecture and loss function}
The trial solution takes the hard-constrained form
\begin{equation}
	\label{eq:trial-parabolic}
	u_\theta(t,x,y)
	= u_0(x,y) + t\,x(1-x)\,y(1-y)\,N_\theta(t,x,y),
\end{equation}
where $N_\theta:\mathbb{R}^3\to\mathbb{R}$ is a fully connected neural
network. The parametrization~\eqref{eq:trial-parabolic} enforces the
initial condition $u_\theta(0,\cdot)=u_0$ and the homogeneous Dirichlet
boundary condition exactly for every $\theta$ (the polynomial factor
vanishes on $\partial\Omega$), so, as in
Section~\ref{subsec:linear-control}, the DR-PINN loss reduces to the pure
inclusion term: it is~\eqref{eq:pde-distance-loss} with
$\Phi$ as in \eqref{eq:relay-Phi} and $\lambda_0=\lambda_b=0$. Since $\Phi(s)$ is an interval of
$\mathbb{R}$, the projection $\Pi_{\Phi(u_\theta(t_i,x_i))}$ needed to
evaluate the distance residual is explicit, requiring no quadratic program:
\[
\dist^2\!\bigl(z,\Phi(s)\bigr)
=
\begin{cases}
(z+\lambda)^2, & s>0,\\[2pt]
\max\bigl(|z|-\lambda,\,0\bigr)^2, & s=0,\\[2pt]
(z-\lambda)^2, & s<0,
\end{cases}
\qquad
z:=\partial_t u_\theta-\Delta u_\theta.
\]
With the \emph{exact} multifunction $\Phi$ of \eqref{eq:relay-Phi}, the
interval branch of this formula is triggered only when $u_\theta$ is
exactly zero in floating-point arithmetic --- an event of probability
zero for the smooth ansatz \eqref{eq:trial-parabolic} --- so almost every
collocation point is forced onto one of the two sign branches, and the
training landscape is essentially that of a discontinuous target. Our
experiments retain this exact-relay configuration as a negative control,
but the primary experiments train against the \emph{$\varepsilon$-banded
relay}
\begin{equation}
	\label{eq:relay-Phi-eps}
	\Phi_\varepsilon(s) :=
	\begin{cases}
		\{-\lambda\}, & s>\varepsilon,\\[2pt]
		[-\lambda,\lambda], & |s|\le\varepsilon,\\[2pt]
		\{\lambda\}, & s<-\varepsilon,
	\end{cases}
	\qquad \varepsilon\ge0,
\end{equation}
which applies the interval branch on the whole band
$\{|s|\le\varepsilon\}$ (so the middle case of the projection formula
above is used whenever $|u_\theta|\le\varepsilon$) and reduces to $\Phi$
for $\varepsilon=0$.

For the banded experiments, the relaxed reaction term is $\Phi_\varepsilon$. The map $\Phi_\varepsilon$ satisfies \ref{P1}--\ref{P3} with the same constant
$m_0=\lambda$: its values are nonempty, closed, convex intervals; it is
upper semicontinuous (it is locally constant outside
$\{|s|\le\varepsilon\}$, and at $|s|\le\varepsilon$ its value is already the
full interval $[-\lambda,\lambda]$, which contains every nearby value);
and the growth bound is unchanged.
Theorem~\ref{thm:consistency-parabolic} therefore applies to the inclusion
with reaction term $\Phi_\varepsilon$.

Moreover, $\Phi(s)\subseteq\Phi_\varepsilon(s)$ for every $s$, hence
$\dist(z,\Phi_\varepsilon(s))\le\dist(z,\Phi(s))$ pointwise, every
solution of the exact relay inclusion is a solution of the
$\varepsilon$-banded one, and the trained loss is a lower bound for the
exact one. The converse fails: the $\varepsilon$-banded inclusion has, in
general, strictly more solutions, and --- unlike the exact relay, whose
strong solution is unique by the monotonicity argument of
Remark~\ref{rem:relay-wellposed} --- the graph of $-\Phi_\varepsilon$ is
not monotone for $\varepsilon>0$, so that argument does not carry over.
Remark~\ref{rem:relay-witness} below makes this loss of rigidity explicit
and quantifies when it matters.

Since a smooth surrogate cannot extinguish exactly, we compare the
solutions through the \emph{band-entry time}
\begin{equation}
	\label{eq:relay-tband}
	t_\varepsilon(u) := \inf\bigl\{\, t\in[0,T] :
	\|u(t,\cdot)\|_{L^\infty(\Omega)} < \varepsilon \,\bigr\},
\end{equation}
evaluated on a uniform time grid of step $2.5\times10^{-3}$, together
with its \emph{persistent} variant (the first grid time after which the
sup-norm stays below $\varepsilon$ up to $T$). For the reference
solution, $t_\varepsilon\to t^*$ as $\varepsilon\to0^+$, so the
band-entry time is a well-posed proxy for the extinction time that is
meaningful for smooth surrogates; the sup-norm (rather than an $L^2$
threshold) is used because the band $\{|u|\le\varepsilon\}$ is exactly
the region where $\Phi_\varepsilon$ differs from $\Phi$.

\begin{remark}[Steady sliding states of the $\varepsilon$-banded relay]
	\label{rem:relay-witness}
	Let $\varphi\in H^2(\Omega)\cap H^1_0(\Omega)$ solve
	$-\Delta\varphi=1$ in $\Omega$, $\varphi>0$ (the torsion function;
	for $\Omega=(0,1)^2$,
	$\|\varphi\|_{L^\infty(\Omega)}\approx0.0736$), and set
	$w_\lambda:=\lambda\,\varphi$. Then
	$\partial_t w_\lambda-\Delta w_\lambda=\lambda$, so $u=w_\lambda$ is a
	stationary strong solution of the $\varepsilon$-banded inclusion
	$\partial_t u-\Delta u\in\Phi_\varepsilon(u)$ if and only if
	$\|w_\lambda\|_{L^\infty(\Omega)}
	=\lambda\|\varphi\|_{L^\infty(\Omega)}\le\varepsilon$: in that case
	$|w_\lambda|\le\varepsilon$ everywhere, the band branch is applied, and
	$\lambda\in \Phi_\varepsilon (w_\lambda)= [-\lambda,\lambda]$. Consequently, when
	$\lambda\|\varphi\|_{L^\infty(\Omega)}\le\varepsilon$ the
	$\varepsilon$-banded dynamics does \emph{not} force extinction:
	nonzero steady states exist inside the band, and after band entry a
	candidate with vanishing $\Phi_\varepsilon$-residual may approximate
	any element of this sliding funnel, not necessarily $u\equiv0$. The
	validation below therefore reports, for each trained network, the
	distance of the post-band tail both to zero and to the extremal
	witness $w_\lambda$, and records whether the witness is itself
	admissible for the given pair $(\lambda,\varepsilon)$
	($\lambda\|\varphi\|_{L^\infty}\approx0.0295,\ 0.0442,\ 0.0589$ for
	$\lambda=0.4,\ 0.6,\ 0.8$). For $\varepsilon=0$ the funnel collapses,
	$w_\lambda$ is never admissible, and uniqueness holds as in
	Remark~\ref{rem:relay-wellposed}.
\end{remark}
We also inspect branch selection by plotting the empirical cloud
\[
\bigl(u_\theta(t_i,x_i),\,\partial_t u_\theta(t_i,x_i)-\Delta u_\theta(t_i,x_i)\bigr)
\]
over the three-branch graph of $\Phi_\varepsilon$, whose middle step has
width $2\varepsilon$. Concentration of this cloud on the graph shows which
admissible value is selected at the collocation points and provides a
diagnostic specific to the set-valued formulation.

\paragraph{Numerical results}

Figure~\ref{fig:relay-extinction-ref} displays the $L^2(\Omega)$-norm decay curves
for the reference solver with $\lambda\in\{0,0.4,0.6,0.8\}$ over $[0,T]$.
The classical heat equation ($\lambda=0$) decays exponentially and
remains well above zero at $t=T=0.3$, whereas all three
relay reference solutions reach exact numerical extinction within
$[0,0.19]$, with the extinction time decreasing monotonically as
$\lambda$ increases.

\begin{figure}[htbp]
  \centering
  \includegraphics[width=0.72\textwidth]{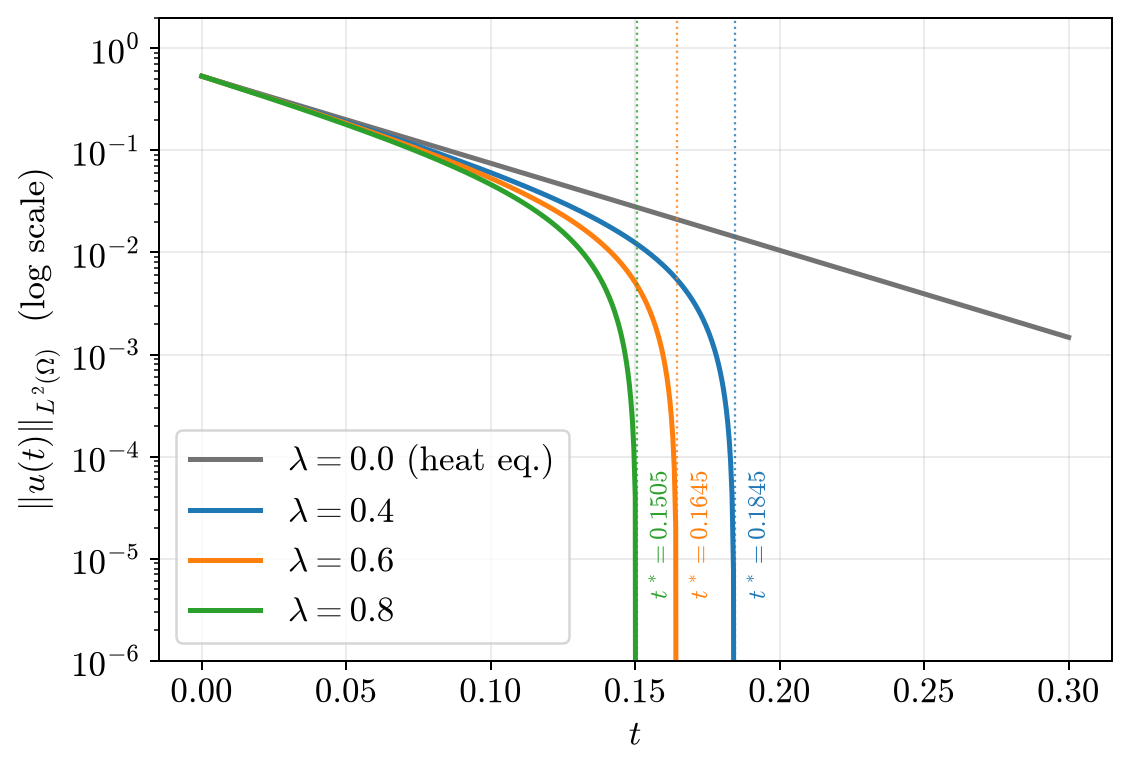}
  \caption{Reference solution: $\|u(t)\|_{L^2(\Omega)}$ on a log scale for
    $\lambda\in\{0,0.4,0.6,0.8\}$. The vertical dotted lines mark the finite-time
    extinction instants $t^*$ of the three relay cases; the $\lambda=0$ heat-equation
    curve does not extinguish within $[0,T]$. Curves are clipped below $10^{-6}$
    for readability; after extinction the reference norm is exactly zero
    (drop below the axis).}
  \label{fig:relay-extinction-ref}
\end{figure}

The network is a fully connected $\tanh$
network with five hidden layers of width $128$ and Glorot-normal
initialization, trained for $40\,000$ epochs with the Adam optimizer
under a hold-then-decay learning-rate schedule: the rate is held at
$2\times10^{-3}$ for the first $5\,000$ epochs and then decayed
exponentially with rate $0.85$ per $3\,000$ steps. The same architecture
and schedule are used for every configuration of the campaign.

\paragraph{Experimental design}
The primary campaign comprises $45$ independent tra\-inings (with $5$ seeds
$\{11,23,47,59,71\}$ per configuration): a $\lambda$-sweep
$\lambda\in\{0.4,0.6,0.8\}$ at fixed $\varepsilon=0.05$; an
$\varepsilon$-sweep $\varepsilon\in\{0,\,0.01,\,0.02,\,0.05,\,0.1\}$ at
fixed $\lambda=0.6$, with $\varepsilon=0$ (the exact relay) serving as a
negative control; and a sampler ablation (front-adaptive vs.\
residual-based RAR vs.\ uniform) at
$(\lambda,\varepsilon)=(0.6,0.05)$. After training, each network is
evaluated on a dense space--time grid (time step $2.5\times10^{-3}$) and
on a fresh cloud of $4\,000$ collocation points: we record the band-entry
times \eqref{eq:relay-tband} (first and persistent), the
$\Phi_\varepsilon$ distance residual, the number of cloud points
violating the inclusion beyond a tolerance of $10^{-4}$, the sup-norm
discrepancy of the pre-band $L^2(\Omega)$-norm curve against the
reference, and the tail distances of Remark~\ref{rem:relay-witness}.
Reference band-entry times $t_{\varepsilon,\mathrm{ref}}$ are computed
from the operator-splitting solution of the exact relay (whose refinement
study, Table~\ref{tab:relay-ref-convergence}, is unchanged) on the same
evaluation grid.

Figure~\ref{fig:relay-training} shows the training and validation curves
of all $45$ runs. The qualitative shape of earlier single-run
experiments is reproduced uniformly: an initial plateau at
$\mathcal{L}\approx20$, an abrupt drop by roughly two orders of magnitude
between epochs ${\approx}5\,000$ and ${\approx}8\,000$, and a steady
decrease thereafter. The five $\varepsilon=0$ runs are clearly separated
from all banded runs, stagnating an order of magnitude higher (final
validation mean $\approx0.44$, seed median, vs.\ $\approx0.027$ at
$\varepsilon=0.05$). This separation is quantified below.

\begin{figure}[htbp]
  \centering
  \includegraphics[width=0.80\textwidth]{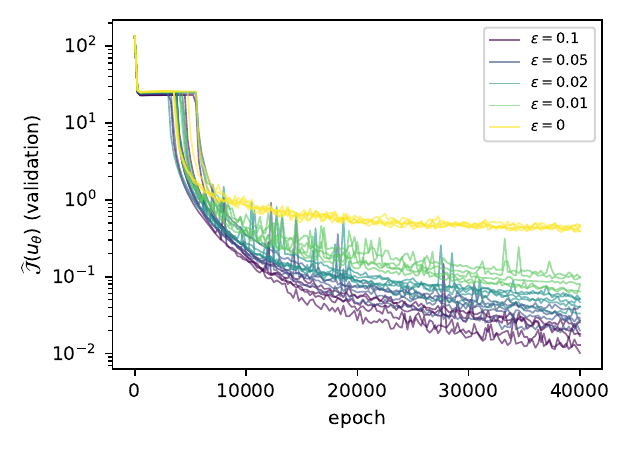}
  \caption{Training history of the inclusion loss for all $45$ runs of
    the campaign, colored by $\varepsilon$ (validation on a fixed batch
    of $4\,000$ points). All banded runs ($\varepsilon>0$) escape the
    initial plateau and continue to decrease; the five exact-relay runs
    ($\varepsilon=0$, top curves) stagnate an order of magnitude higher.}
  \label{fig:relay-training}
\end{figure}

\paragraph{Band-entry times: quantitative agreement with the reference}
Table~\ref{tab:relay-extinction} reports the $\lambda$-sweep at
$\varepsilon=0.05$. For every $\lambda$ and every seed the first and the
persistent band crossings coincide, and the seed-median band-entry time
matches the reference band-entry time \emph{exactly} to the resolution of
the evaluation grid, with seed ranges of at most one grid step
($2.5\times10^{-3}$). In particular, the monotone ordering
$t_\varepsilon(\lambda{=}0.8)<t_\varepsilon(0.6)<t_\varepsilon(0.4)$ of
the reference is reproduced by every individual run. The
$\varepsilon$-sweep (Table~\ref{tab:relay-eps}) shows the same agreement
across $\varepsilon\in\{0.02,0.05,0.1\}$ --- at $\varepsilon=0.1$ all five
seeds return the identical value $t_\varepsilon=0.105$ --- while at
$\varepsilon=0.01$ the agreement degrades gracefully: the median still
sits on the reference value, the seed range widens to four grid steps,
and for one seed the first crossing is transient (the sup-norm re-emerges
above the band, and the persistent crossing occurs only at $t=0.25$),
marking the resolution limit of the smooth surrogate. At $\varepsilon=0$
the negative control reproduces the failure mode of the exact relay
documented in earlier versions of this experiment: no run achieves a
persistent crossing, two of five never cross at all, and roughly
$3\,900$ of the $4\,000$ cloud points violate the inclusion beyond
$10^{-4}$, against ${\approx}900$ at $\varepsilon=0.05$. The transition
between the two regimes is sharp and occurs already at
$\varepsilon=0.02$, i.e.\ at a band two orders of magnitude smaller than
the amplitude of the initial datum. Figure~\ref{fig:relay-sweep}
visualizes the $\lambda$-sweep, and Figure~\ref{fig:relay-eps-sweep} the
residual statistics along the $\varepsilon$-sweep.

\begin{table}[htbp]
  \centering
  \small
  \setlength{\tabcolsep}{3pt}
  \begin{tabular}{ccccccc}
    \toprule
    $\lambda$ & $t_{\varepsilon,\mathrm{ref}}$ &
    \shortstack{$t_\varepsilon$ (DR-PINN)\\median [range]} &
    \shortstack{persistent\\crossings} &
    \shortstack{violations\\(of $4\,000$)} &
    \shortstack{tail dist.\\to $0$} &
    \shortstack{witness\\admissible} \\
    \midrule
    0.4 & 0.1350 & 0.1350 [0.1350, 0.1375] & 5/5 & 920 & $1.8\times10^{-3}$ & yes \\
    0.6 & 0.1275 & 0.1275 [0.1250, 0.1275] & 5/5 & 896 & $1.8\times10^{-3}$ & yes \\
    0.8 & 0.1200 & 0.1200 [0.1200, 0.1225] & 5/5 & 867 & $4.0\times10^{-3}$ & no \\
    \bottomrule
  \end{tabular}
  \caption{$\lambda$-sweep at $\varepsilon=0.05$ (front sampler, $5$
    seeds per row; violations and tail distances are seed medians). The
    DR-PINN band-entry time \eqref{eq:relay-tband} agrees with the
    reference band-entry time to within one evaluation-grid step for
    every seed, every first crossing is persistent, and the monotone
    ordering in $\lambda$ is preserved by every individual run --- in
    contrast to the raw $10^{-3}$ threshold crossings of the exact relay
    reported as the $\varepsilon=0$ control in
    Table~\ref{tab:relay-eps}. ``Witness admissible'' records whether
    $\lambda\|\varphi\|_{L^\infty}\le\varepsilon$
    (Remark~\ref{rem:relay-witness}); in all cases the trained tail is an
    order of magnitude closer to the zero state than to the extremal
    witness (cf.\ Table~\ref{tab:relay-eps}), so the networks select the
    extinct solution even where the $\varepsilon$-banded funnel is
    nontrivial.}
  \label{tab:relay-extinction}
\end{table}

\begin{table}[htbp]
  \centering
  \small
  \setlength{\tabcolsep}{3pt}
  \begin{tabular}{cccccccc}
    \toprule
    $\varepsilon$ & $t_{\varepsilon,\mathrm{ref}}$ &
    \shortstack{$t_\varepsilon$ (DR-PINN)\\median [range]} &
    \shortstack{persist.\\cross.} &
    \shortstack{final val.\\mean} &
    \shortstack{violations\\(of $4\,000$)} &
    \shortstack{tail dist.\\to $0$} &
    \shortstack{tail dist.\\to $w_\lambda$} \\
    \midrule
    $0$    & ---    & --- ($2/5$ no crossing)   & 0/5 & 0.437 & 3\,911 & $1.2\times10^{-3}$ & $2.6\times10^{-2}$ \\
    $0.01$ & 0.1550 & 0.1550 [0.1475, 0.1575]   & $4/5^{\dagger}$ & 0.081 & 1\,301 & $2.4\times10^{-3}$ & $2.4\times10^{-2}$ \\
    $0.02$ & 0.1475 & 0.1475 [0.1450, 0.1475]   & 5/5 & 0.047 & 1\,152 & $1.8\times10^{-3}$ & $2.3\times10^{-2}$ \\
    $0.05$ & 0.1275 & 0.1275 [0.1250, 0.1275]   & 5/5 & 0.027 & 896    & $1.8\times10^{-3}$ & $2.5\times10^{-2}$ \\
    $0.1$  & 0.1050 & 0.1050 [0.1050, 0.1050]   & 5/5 & 0.017 & 652    & $2.9\times10^{-3}$ & $2.5\times10^{-2}$ \\
    \bottomrule
  \end{tabular}
  \caption{$\varepsilon$-sweep at $\lambda=0.6$ (front sampler, $5$ seeds
    per row; final validation mean, violations, and tail distances are
    seed medians). The exact relay $\varepsilon=0$ is the negative
    control and reproduces the failure mode of the discontinuous target:
    no persistent crossings, an order-of-magnitude larger residual, and
    near-total violation of the pointwise inclusion on the evaluation
    cloud. From $\varepsilon=0.02$ on, band-entry times match the
    reference to within one grid step with persistent crossings for all
    seeds. $^{\dagger}$At $\varepsilon=0.01$ one seed's first crossing is
    transient; its persistent crossing occurs only at $t=0.25$. The tail
    is consistently an order of magnitude closer to $u\equiv0$ than to
    the extremal witness $w_\lambda$ of
    Remark~\ref{rem:relay-witness}.}
  \label{tab:relay-eps}
\end{table}

\begin{figure}[htbp]
  \centering
  \includegraphics[width=0.62\textwidth]{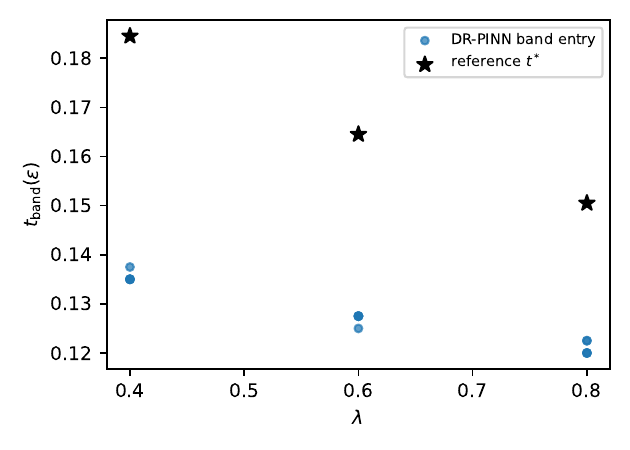}
  \caption{Band-entry times across
    $\lambda\in\{0.4,0.6,0.8\}$ at $\varepsilon=0.05$: DR-PINN values
    (five seeds per $\lambda$, dots) against the reference band-entry
    times and the reference extinction times $t^*$ (stars). The DR-PINN
    values sit on the reference band-entry values to within the
    evaluation-grid resolution and preserve the monotone ordering in
    $\lambda$ run by run.}
  \label{fig:relay-sweep}
\end{figure}

\begin{figure}[htbp]
  \centering
  \includegraphics[width=0.62\textwidth]{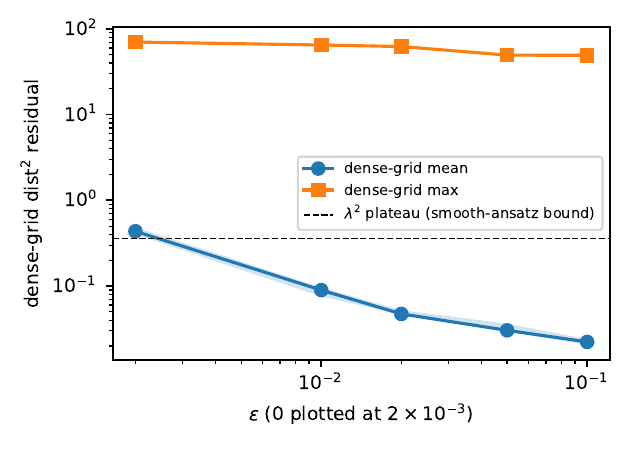}
  \caption{Dense-grid distance-residual statistics along the
    $\varepsilon$-sweep at $\lambda=0.6$ ($\varepsilon=0$ plotted at
    $2\times10^{-3}$ on the logarithmic axis). The mean residual
    decreases monotonically with $\varepsilon$ and drops below the
    $\lambda^2$ plateau of the smooth-ansatz bound as soon as the band
    opens, while the maximum residual --- attained ahead of the moving
    interface --- remains of order $\lambda^2$ times the squared
    operator overshoot for every $\varepsilon$.}
  \label{fig:relay-eps-sweep}
\end{figure}

Figure~\ref{fig:relay-vs-ref} overlays the $L^2(\Omega)$-norm decay
curves of all five seeds against the reference for the baseline
$\lambda=0.6$ (companion figures for $\lambda=0.4$ and $0.8$ are included
in the replication package). The curves coincide with the reference to
within ${\approx}2\%$ in sup norm over the pre-band interval, for every
seed and every $\lambda$ of the sweep. After band entry the network norms
stabilize at a small nonzero level of order $10^{-3}$--$10^{-2}$: a
single analytic tanh-network trajectory generally cannot be nonzero
before the extinction time and exactly zero on an open post-extinction
interval, so a small nonzero post-band norm is expected and is not
interpreted as exact extinction; the band-entry observable
\eqref{eq:relay-tband} is designed precisely so that this tail does not
contaminate the reported times.

\begin{figure}[htbp]
  \centering
  \includegraphics[width=0.72\textwidth]{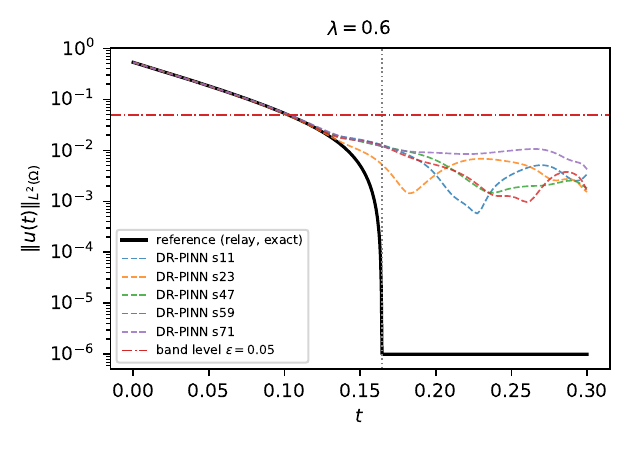}
  \caption{$L^2(\Omega)$-norm decay for $\lambda=0.6$,
    $\varepsilon=0.05$, on a log scale: reference solution (solid) and
    the five DR-PINN seeds (dashed). The horizontal dash-dotted line
    marks the band level $\varepsilon$, the vertical dotted line the
    reference extinction time $t^*_{\mathrm{ref}}=0.1645$. The curves
    track the reference to within ${\approx}2\%$ before band entry;
    after band entry the smooth surrogates retain a small nonzero tail,
    which the band-entry observable is designed to ignore.}
  \label{fig:relay-vs-ref}
\end{figure}

Figure~\ref{fig:relay-branch} shows the branch-selection diagnostic for a
representative baseline run. For state values
$s=u_\theta(t,x,y)>\varepsilon$, the cloud of operator values
$z=\partial_tu_\theta-\Delta u_\theta$ concentrates tightly on the branch
$z=-\lambda=-0.6$ of the graph of $\Phi_\varepsilon$, confirming that the
network selects the correct sign branch over most of $Q$; almost no
collocation points appear with $s<-\varepsilon$ (sign preservation by the
relay flow, up to a small smooth undershoot near the front). Inside the
band $|s|\le\varepsilon$ the wide step $[-\lambda,\lambda]$ absorbs the
operator values, and the residual there is numerically zero. The residual
mass that remains --- roughly $900$ of the $4\,000$ cloud points violate
the inclusion beyond $10^{-4}$ --- sits in a thin layer \emph{ahead} of
the moving interface, where $s>\varepsilon$ but the smooth output is
already transitioning: there the vertical scatter of $z$ reaches values
of order $5$, far above the graph. This scatter is a direct consequence
of the jump of the relay at the band edge: no smooth network output can
resolve a discontinuous right-hand side pointwise.

\begin{figure}[htbp]
  \centering
  \includegraphics[width=0.72\textwidth]{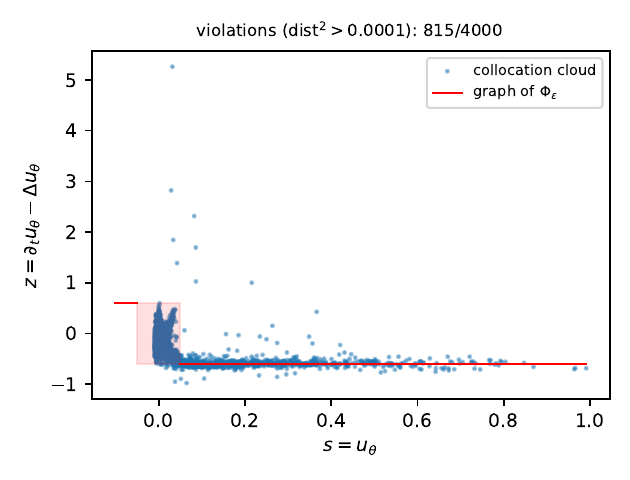}
  \caption{Branch-selection diagnostic ($\lambda=0.6$,
    $\varepsilon=0.05$): empirical cloud of
    $(u_\theta,\,\partial_tu_\theta-\Delta u_\theta)$ at $4\,000$
    post-training collocation points, overlaid on the graph of
    $\Phi_\varepsilon$ (red; the shaded step of width $2\varepsilon$ is
    the band). Points with $s>\varepsilon$ cluster tightly on the branch
    $z=-0.6$; inside the band the residual vanishes; the vertical
    scatter concentrates in a thin layer ahead of the moving interface,
    where the discontinuity of the relay limits pointwise accuracy. The
    inset magnifies the band region.}
  \label{fig:relay-branch}
\end{figure}

\paragraph{Sampler ablation}
Table~\ref{tab:relay-ablation} compares, at the baseline
$(\lambda,\varepsilon)=(0.6,0.05)$, the front-based adaptive sampler
against residual-based RAR \cite{LuMengMaoKarniadakis2021DeepXDE} and
purely uniform collocation, five seeds each. All three samplers produce
identical median band-entry times ($0.1275$, persistent for every seed),
and their residual and violation statistics overlap within seed-to-seed
variability. At these settings the adaptive schemes therefore provide no
measurable benefit over uniform sampling; we report this null result
explicitly and retain the front-based sampler in the primary sweeps only
for uniformity of protocol and as a safeguard for smaller $\varepsilon$,
where the band is thinner relative to the domain.

\begin{table}[htbp]
  \centering
  \small
  \setlength{\tabcolsep}{2.5pt}
  \begin{tabular}{lcccc}
    \toprule
    sampler &
    \shortstack{$t_\varepsilon$\\median [range]} &
    \shortstack{persistent\\crossings} &
    \shortstack{final val.\ mean\\median [range]} &
    \shortstack{violations\\median} \\
    \midrule
    front (adaptive) & 0.1275 [0.1250, 0.1275] & 5/5 & 0.027 [0.020, 0.041] & 896 \\
    RAR              & 0.1275 [0.1275, 0.1275] & 5/5 & 0.023 [0.017, 0.026] & 899 \\
    uniform          & 0.1275 [0.1250, 0.1300] & 5/5 & 0.026 [0.016, 0.027] & 895 \\
    \bottomrule
  \end{tabular}
  \caption{Sampler ablation at $(\lambda,\varepsilon)=(0.6,0.05)$, $5$
    seeds per sampler. The three schemes are statistically indistinguishable on every reported metric; the band-entry observable is insensitive to the collocation strategy at these settings.}
  \label{tab:relay-ablation}
\end{table}

\paragraph{Residual structure and remaining limitations}
The residual reduction is not uniform over $Q$. On
the $4\,000$-point evaluation cloud at
$(\lambda,\varepsilon)=(0.6,0.05)$, the $\Phi_\varepsilon$ distance
residual has seed-median mean ${\approx}0.030$ but maximum
${\approx}50$: inside the band and after band entry the residual is
numerically zero (post-band mean $\lesssim10^{-6}$), so the trained tail
is genuinely admissible, but ahead of the moving interface --- where
$|u_\theta|>\varepsilon$ and the smooth surrogate must still approximate
a sign branch across a thin transition layer --- the residual remains
concentrated and large in absolute terms. The pre-band
$L^2(\Omega)$-norm curves nevertheless track the reference to within
${\approx}2\%$ in sup norm. Thus the band-entry observable
\eqref{eq:relay-tband} reproduces the extinction dynamics quantitatively:
it is seed-stable, persistent, monotone in $\lambda$, and grid-exact against
the reference for $\varepsilon\ge0.02$. At the same time, the hypothesis
$\mathcal{J}(u_n)\to0$ of Theorem~\ref{thm:consistency-parabolic} is not
realized uniformly because the continuous residual remains dominated by
the pre-band interface layer. In addition, Remark~\ref{rem:relay-witness}
shows that at $(\lambda,\varepsilon)=(0.4,0.05)$ and
$(0.6,\{0.05,0.1\})$ the $\varepsilon$-banded inclusion admits nonzero
steady sliding states, so a vanishing post-band residual alone would not
identify the extinct solution; the recorded tail distances show that
every trained network is an order of magnitude closer to $u\equiv0$
(median ${\approx}2\times10^{-3}$) than to the extremal witness
(${\approx}2.5\times10^{-2}$), i.e.\ the networks select the solution of
the exact relay even where the relaxed problem does not force it.

\section{Conclusions}
\label{sec:conclusions}

We proposed  Distance-Residual Physics-Informed Neural Networks
(DR-PINNs) for set-valued differential laws. The usual pointwise residual
is replaced by the squared distance from the differential operator to the
admissible set; this distance vanishes exactly on admissible operator
values and gives the classical PINN loss when the set is a singleton. On the
theoretical side, we proved consistency results for both ordinary
differential inclusions (Theorem~\ref{thm:consistency}) and parabolic
partial differential inclusions
(Theorem~\ref{thm:consistency-parabolic}): any sequence of admissible
candidates whose \emph{continuous} distance-residual functional tends to
zero converges, along a subsequence, to an exact solution of the target
inclusion; relating the finite-collocation training loss to the continuous
functional requires the additional sampling and regularity assumptions
discussed in Remark~\ref{rem:collocation-gap}. These are conditional
consistency results for the continuous functional. They do not assert the
existence of a neural-network sequence with vanishing continuous residual,
convergence of a gradient-based optimizer, or validity of the bridging
assumptions for random or adaptive collocation, and they do not provide
convergence rates. Each of these issues requires additional analysis.
The proofs combine a priori estimates, compactness, weak convergence of the
differential operators, and the lower semicontinuity of normal convex
integrands; in view of the intrinsic non-uniqueness of solutions of
inclusions, this compactness-type statement is the natural analogue of
the error bounds available in the single-valued setting. On the practical
side, the projection defining the loss is computable: explicitly for
intervals, balls, and boxes; by a small quadratic program, combined with
an offline Quickhull reduction, for polytopic sets; and by a scalar
secular equation for ellipsoidal sets.

The numerical experiments cover three settings. For a
linear control system with a polytopic input set, the distance residual
is driven to machine-precision levels and the trained ensemble stays
close to an empirically sampled reachable tube. For a planar inclusion
with a rotating ellipsoidal constraint, the DR-PINN companion run reaches
a collocation loss of $\EllipCompLoss$ with endpoint constraints enforced
exactly, while a complementary hard-admissible parametrization
illustrates how the residual can instead be eliminated at the time-grid
nodes by construction (with feasibility between nodes confirmed a
posteriori on a dense validation grid), freeing the remaining degrees of freedom to select a particular element of the solution set. For a reaction-diffusion inclusion with relay feedback, training against an $\varepsilon$-banded relaxation $\Phi_\varepsilon\supseteq\Phi$ (which satisfies the same hypotheses \ref{P1}--\ref{P3}, so the same consistency theorem applies) turns a merely qualitative agreement into a quantitative one: for $\varepsilon\ge0.02$ the band-entry times of the trained networks are persistent, seed-stable (ranges of at most one evaluation-grid step over
five seeds), monotone in the relay strength $\lambda$, and agree with the
reference solver to the resolution of the evaluation grid, while the
exact relay $\varepsilon=0$ retains its failure mode and is reported as a
negative control. A sampler ablation shows the adaptive collocation
schemes to be statistically indistinguishable from uniform sampling at
these settings. The residual nonetheless remains concentrated in a thin
layer ahead of the extinction front, where a smooth surrogate must
approximate a sign branch of the relay; and the relaxed inclusion admits
nonzero steady sliding states whenever
$\lambda\|(-\Delta)^{-1}1\|_{L^\infty(\Omega)}\le\varepsilon$, yet the
trained tails are an order of magnitude closer to the extinct solution
than to the extremal sliding state, i.e.\ the networks select the
solution of the exact relay even where the relaxation does not force it.

Several directions remain open. Quantitative error estimates would require
additional structure, such as one-sided Lipschitz or monotonicity
conditions on $F$ or $\Phi$, beyond the compactness-based consistency used
here. State-dependent admissible sets also raise a separate backward-pass
question because the projection moves with the network state. The relay
experiment suggests combining the distance residual with hard-admissible
parametrizations, adaptive sampling, or nonsmooth network components near
discontinuities of the multifunction. Extensions to hyperbolic partial
differential inclusions, sweeping processes, and differential variational
inequalities are natural further directions.
Relaxing convexity of $F(t,x)$ and $\Phi(u)$ is also a natural but nontrivial open problem, since it would require prox-regularity of the admissible sets in place of convexity for the differentiability of the squared distance.

\section*{Code and data availability}

All numbers, tables, and figures of Section~\ref{sec:numerics} are
produced by a single execution of the replication pipeline referenced
throughout the text. The replication package is publicly available at
\url{https://github.com/CA24136PINN/pinns-for-differential-inclusions}. The package contains: the scripts
\texttt{run\_experiment\_61.py}, \texttt{run\_experiment\_62.py}, and
\texttt{run\_experiment\_63.py} generating every table and figure of
Sections~\ref{subsec:linear-control}--\ref{subsec:parabolic-PDI}
(including the automatically generated numerical macros \texttt{generated/results\_6x.tex} compiled into this manuscript); all
random seeds; the saved network checkpoints, in particular those
underlying the figures and tables of Section~\ref{subsec:parabolic-PDI}; the exact definitions and weights of all regularizers entering the cost \eqref{eq:ellipse-cost}; and a manifest
recording the hardware, operating system, and full library versions of
the runs whose outputs are reported here.

\section*{Acknowledgments}
This article is based upon work from COST Action InterCoML, CA24136, supported by COST (European Cooperation in Science and Technology).

\bibliographystyle{plainnat}
\bibliography{references}

\end{document}